\chardef\bslash=`\\ 
\def\verbatim{\interlinepenalty\@M \@verbatim
  \leftskip\@totalleftmargin\advance\leftskip2pc
  \frenchspacing\@vobeyspaces \@xverbatim}
\newtheorem*{theorem*}{Theorem}
\newtheorem{thm}{Theorem}[section]
\newtheorem{cor}[thm]{Corollary}
\newtheorem{lem}[thm]{Lemma}
\newtheorem{pro}[thm]{Proposition}
\newtheorem*{A}{Theorem 4.1}
\newtheorem*{B}{Theorem 4.4}
\newtheorem*{C}{Theorem 6.7}
\theoremstyle{definition}
\newtheorem{defin}{Definition}[section]
\theoremstyle{remark}
\numberwithin{equation}{section}
\begin{document}


\title
{Extension properties of Stone-\v{C}ech coronas and proper absolute extensors}
\author{A. Chigogidze}
\address{Department of Mathematics,
College of Staten Island, CUNY,
2800 Victory Blvd, Staten Island, NY, 10314, USA}
\email{alex.chigogidze@csi.cuny.edu}
\keywords{Stone-\v{C}ech corona, $Z_{\tau}$-set, finite complex, absolute extensor}
\subjclass{Primary: 54C20, 57N20; Secondary: 54D35}


\begin{abstract}{We characterize, in terms of $X$, extensional dimension of the Stone-\v{C}ech corona $\beta X \setminus X$ of locally compact and Lindel\"{o}f space $X$. The non-Lindel\"{o}f case case is also settled in terms of extending proper maps with values in $I^{\tau}\setminus L$, where $L$ is a finite complex. Further, for a finite complex $L$, an uncountable cardinal $\tau$ and a $Z_{\tau}$-set $X$ in the Tychonov cube $I^{\tau}$ we find necessary and sufficient condition, in terms of $I^{\tau}\setminus X$, for $X$ to be in the class $\operatorname{AE}([L])$. We also introduce a concept of a proper absolute extensor and characterize the product $[0,1)\times I^{\tau}$ as the only locally compact and Lindel\"{o}f proper absolute extensor of weight $\tau > \omega$ which has the same pseudocharacter at each point.}
\end{abstract}

\maketitle
\markboth{A.~Chigogidze}
{Stone-\v{C}ech coronas}

\section{Introduction}
We study extension properties of Stone-\v{C}ech coronas of locally compact spaces focusing on the following two problems:

\begin{itemize}
  \item[(A)]
When -- in terms of $X$ -- are maps, defined on closed subsets of $\beta X \setminus X$, into a finite complex $L$ extendible to the whole $\beta X \setminus X$? 
  \item[(B)] 
 When -- in terms of $Y$ -- are maps, defined on closed subsets of nice spaces, into $\beta Y \setminus Y$ extendible to the whole domain? 
\end{itemize}

When every map $f \colon A \to Y$, defined on a closed subset $A$ of $X$, has an extension $\bar{f} \colon X\to Y$ we say that $Y$ is an absolute extensor of $X$ and write $Y \in \operatorname{AE}(X)$. Assuming that both $f$ and $\bar{f}$ in this definition are proper  we obtain notion of proper absolute extensor (for details see Definition \ref{D:main}). Notation for the latter is $Y \in \operatorname{AE}_{p}(X)$. It turns out (Corollaries \ref{C:first}, \ref{C:dim}) that for a locally compact and Lindel\"{o}f (e.g. separable and metrizable) space $X$ and a finite complex $L$, $L \in \operatorname{AE}(\beta X \setminus X)$ precisely when $\operatorname{Cone}(L)\setminus L \in \operatorname{AE}_{p}(X)$ (here $L$ is identified with the base $L \times \{ 0\}$ of the cone $\operatorname{Cone}(L)$). For $L = S^{n}$, we obtain the following observation: $\dim (\beta X \setminus X) = \dim_{p}X -1$, where $\dim_{p}X \leq n$ is just a notation for $R^{n}\in \operatorname{AE}_{p}(X)$. We should point out that the problem of describing dimensions (covering, inductive) of the Stone-\v{C}ech (or Hewitt) coronas, using completely different approaches, has been considered by several authors (see, for example,  \cite{aarts}, \cite{an}, \cite{an1}, \cite{smirnov1}, \cite{smirnov2}, \cite{chif}). 

However, non Lindel\"{o}f spaces do not admit proper maps into $R^{n}$ or into any space of the form $\operatorname{Cone}(L) \setminus L$, where $L$ is a finite complex, and above observations need to be adjusted in order to remain valid in general case. We start by noting that since $L$ (i.e. $L \times \{0 \} \subset \operatorname{Cone}(L)$) is a $Z$-set in $\operatorname{Cone}(L)$ it follows from the Chapman's Complement Theorem that no matter how is $L$ $Z$-embedded into the Hilbert cube the complement $I^{\omega}\setminus L$ is homeomorphic to $I^{\omega} \times \operatorname{Cone}(L) \setminus I^{\omega} \times L = I^{\omega} \times (\operatorname{Cone}(L) \setminus L)$. Since $\operatorname{Cone}(L) \setminus L  \in\operatorname{AE}_{p}(X)$ if and only if $I^{\omega} \times (\operatorname{Cone}(L) \setminus L) \in\operatorname{AE}_{p}(X)$, observation made above can be reformulated as follows: $L \in \operatorname{AE}(\beta X \setminus X)$ if and only if $I^{\omega} \setminus L \in \operatorname{AE}_{p}(X)$. While the testing space $I^{\omega}\setminus L$ is still Lindel\"{o}f and hence is not suitable for general situation, it does allow us to find it's non-metrizable counterpart, which turns out is the complement $I^{\tau} \setminus L$. Choice of an embedding $L \hookrightarrow I^{\tau}$, when $\tau > \omega$, is irrevelevant since any metric compactum is a $Z_{\tau}$-set in $I^{\tau}$ as long as $\tau > \omega$ \cite[Corollary 8.5.7]{chibook}. With this in mind we settle problem (A) by proving the following statement.

\begin{A} 
Let $X$ be a locally compact space which can be covered by at most $\tau$ compact subsets and each regular closed subset of which is $C^{\ast}$-embedded. Let also $L$ be a compact $ANR$-space embedded into the cube $I^{\tau}$ as a $Z_{\tau}$-set. Then the following conditions are equivalent:
\begin{itemize}
  \item[(a)]
$L \in \operatorname{AE}(\beta X \setminus X)$; 
  \item[(b)]
$I^{\tau} \setminus L \in \operatorname{AE}_{p}(X)$. 
\end{itemize}  
\end{A}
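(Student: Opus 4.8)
The plan is to set up a dictionary between proper maps of closed subsets of $X$ into $I^{\tau}\setminus L$ and maps of Stone--\v{C}ech remainders into $L$, and then to read off (a) $\Leftrightarrow$ (b) from that dictionary together with the $Z_{\tau}$-set technology (in particular Mapping Replacement for $Z_{\tau}$-sets) developed above. The dictionary is as follows. If $A$ is closed in $X$, then $\operatorname{cl}_{\beta X}A\cap X=A$, so $\operatorname{cl}_{\beta X}A\setminus A=\operatorname{cl}_{\beta X}A\cap(\beta X\setminus X)$ is a closed subset of $\beta X\setminus X$; and since $I^{\tau}$ is compact, any $f\colon A\to I^{\tau}\setminus L$ extends uniquely to a map $\widehat{f}\colon\operatorname{cl}_{\beta X}A\to I^{\tau}$. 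I would first verify the elementary equivalence: $f$ is proper if and only if $\widehat{f}^{-1}(L)=\operatorname{cl}_{\beta X}A\setminus A$. Indeed, if $f$ is proper then $\widehat{f}^{-1}(K)$ is compact, hence contained in $A$, for every compact $K\subseteq I^{\tau}\setminus L$, which forces $\widehat{f}(\operatorname{cl}_{\beta X}A\setminus A)\subseteq L$; conversely, if $\widehat{f}^{-1}(L)=\operatorname{cl}_{\beta X}A\setminus A$ then $f^{-1}(K)=\widehat{f}^{-1}(K)$ is closed in the compact space $\operatorname{cl}_{\beta X}A$ for every such $K$. Thus proper maps $A\to I^{\tau}\setminus L$ are exactly the restrictions to $A$ of maps $\operatorname{cl}_{\beta X}A\to I^{\tau}$ whose preimage of $L$ is precisely the remainder $\operatorname{cl}_{\beta X}A\setminus A$. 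The hypothesis that $X$ is covered by at most $\tau$ compacta keeps all maps in sight within the weight of $I^{\tau}$, and the hypothesis that regular closed subsets of $X$ are $C^{\ast}$-embedded is what lets one realize prescribed closed subsets of $\beta X\setminus X$ as remainders $\operatorname{cl}_{\beta X}A\setminus A=\beta A\setminus A$ of regular closed $A\subseteq X$ and apply the $Z_{\tau}$-set replacement results over the relevant compacta.

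\emph{Proof of} (b) $\Rightarrow$ (a). Let $B$ be closed in $\beta X\setminus X$ and $g\colon B\to L$. Since $L$ is an $ANR$ and $\beta X\setminus X$ is compact and normal, extend $g$ to $g_{1}\colon W\to L$ on an open neighbourhood $W$ of $B$ in $\beta X\setminus X$, and choose a regular closed $A\subseteq X$ with $B\subseteq\operatorname{cl}_{\beta X}A\setminus A\subseteq W$; put $\Gamma=\operatorname{cl}_{\beta X}A\setminus A$ and $g'=g_{1}|_{\Gamma}\colon\Gamma\to L$. Extend $g'$ over the compact normal space $\operatorname{cl}_{\beta X}A$ to a map $G\colon\operatorname{cl}_{\beta X}A\to I^{\tau}$ (as $I^{\tau}$ is an absolute extensor for normal spaces), and apply Mapping Replacement for $Z_{\tau}$-sets to push $G$ off $L$ rel $\Gamma$, obtaining $H\colon\operatorname{cl}_{\beta X}A\to I^{\tau}$ with $H|_{\Gamma}=g'$ and $H^{-1}(L)=\Gamma$. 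By the dictionary, $H|_{A}\colon A\to I^{\tau}\setminus L$ is proper, so by (b) it extends to a proper map $\overline{f}\colon X\to I^{\tau}\setminus L$. Let $q\colon\beta X\to I^{\tau}$ be the extension of $\overline{f}$; by the dictionary (applied to $A=X$) it sends $\beta X\setminus X$ into $L$, and on $\operatorname{cl}_{\beta X}A$ it is the unique extension of $\overline{f}|_{A}=H|_{A}$, namely $H$. Hence $q|_{\beta X\setminus X}\colon\beta X\setminus X\to L$ restricts on $\Gamma$ to $g'$, and therefore on $B$ to $g$, which is the required extension.

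\emph{Proof of} (a) $\Rightarrow$ (b). Let $A$ be closed in $X$ and $f\colon A\to I^{\tau}\setminus L$ proper, with extension $\widehat{f}\colon\operatorname{cl}_{\beta X}A\to I^{\tau}$, so that $\widehat{f}^{-1}(L)=\operatorname{cl}_{\beta X}A\setminus A$. By (a), extend $\widehat{f}|_{\operatorname{cl}_{\beta X}A\setminus A}$ to $\overline{g}\colon\beta X\setminus X\to L$. The maps $\widehat{f}$ and $\overline{g}$ agree on the overlap $\operatorname{cl}_{\beta X}A\cap(\beta X\setminus X)=\operatorname{cl}_{\beta X}A\setminus A$, so they glue to a map on the closed set $F=\operatorname{cl}_{\beta X}A\cup(\beta X\setminus X)$ of the compact normal space $\beta X$, which extends to $G\colon\beta X\to I^{\tau}$ with $G|_{\operatorname{cl}_{\beta X}A}=\widehat{f}$ and $G|_{\beta X\setminus X}=\overline{g}$. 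Since $\widehat{f}^{-1}(L)=\operatorname{cl}_{\beta X}A\setminus A$ and $\overline{g}(\beta X\setminus X)\subseteq L$, we get $G^{-1}(L)\cap F=\beta X\setminus X$. Apply Mapping Replacement for $Z_{\tau}$-sets to push $G$ off $L$ rel $F$, obtaining $H\colon\beta X\to I^{\tau}$ with $H|_{F}=G|_{F}$ and $H^{-1}(L)=G^{-1}(L)\cap F=\beta X\setminus X$. By the dictionary (with $A=X$), $H|_{X}\colon X\to I^{\tau}\setminus L$ is proper, and $H|_{A}=\widehat{f}|_{A}=f$, so $H|_{X}$ is the desired proper extension of $f$.

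I expect the main obstacle to be the application of Mapping Replacement for $Z_{\tau}$-sets in exactly the form used here: a map into $I^{\tau}$ must be deformed off the $Z_{\tau}$-set $L$ while being held fixed on a prescribed closed set on which it already takes, and must keep, values in $L$ --- and this must be carried out over the rather wild compacta $\operatorname{cl}_{\beta X}A$ and $\beta X$. Checking that the $Z_{\tau}$-set results of the preceding sections cover this situation, together with the lemma that prescribed closed subsets of $\beta X\setminus X$ can be squeezed between remainders $\operatorname{cl}_{\beta X}A\setminus A$ of regular closed $A\subseteq X$ (needed for the choice of $A$ in the (b) $\Rightarrow$ (a) step), is where the hypotheses on $X$ and the earlier machinery do the real work.
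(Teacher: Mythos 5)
Your overall architecture is the same as the paper's: pass to Stone--\v{C}ech closures, use the given extension property on one side of the corona, glue, and then use the mapping replacement for $Z_{\tau}$-sets (Proposition \ref{L:main}) to push the glued map off $L$. Your ``dictionary'' (properness of $f\colon A\to I^{\tau}\setminus L$ $\Leftrightarrow$ $\widehat f^{-1}(L)=\operatorname{cl}_{\beta X}A\setminus A$, valid once $A$ is $C^{\ast}$-embedded so that $\operatorname{cl}_{\beta X}A=\beta A$) is correct and is exactly what the paper uses implicitly. Your (b)\,$\Rightarrow$\,(a) is essentially the paper's proof and is sound: there the set you move off $L$ is the regular closed set $A\subseteq X$, which is covered by the traces $A\cap C_{\alpha}$ of the $\tau$ compacta covering $X$, each disjoint from $\Gamma\subseteq\beta X\setminus X$, so Proposition \ref{L:main} applies directly to the compactum $\operatorname{cl}_{\beta X}A$.

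The genuine gap is in (a)\,$\Rightarrow$\,(b), at the step ``apply Mapping Replacement to push $G$ off $L$ rel $F$'' over $\beta X$. Proposition \ref{L:main} only moves a prescribed family of \emph{at most $\tau$ compacta} disjoint from the fixed set; to conclude $H^{-1}(L)=\beta X\setminus X$ you would need $\beta X\setminus F=X\setminus A$ to be a union of at most $\tau$ compacta missing $A\cup(\beta X\setminus X)$. The hypothesis gives this for $X$ itself, but not for the \emph{open} subset $X\setminus A$: the pieces $C_{\alpha}\setminus A$ are open in the compacta $C_{\alpha}$, which may have weight far exceeding $\tau$, and an open subset of such a compactum need not be a $\tau$-union of compacta (already $I^{\kappa}\setminus\{pt\}$ for $\kappa>\tau$ fails, and compact $X$ of large weight satisfies all the hypotheses of the theorem). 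The paper closes exactly this hole by first factoring the glued map $G=q\circ p$ through a compactum $Y$ of weight $\leq\tau$ via the spectral theorem, arranged so that $\beta X\setminus X=p^{-1}(p(\beta X\setminus X))$ --- this is where the covering hypothesis is really spent, via the separating functions of the $C_{\alpha}$ from $\beta X\setminus X$ --- and then applying Proposition \ref{L:main} in $Y$, where the complement of any closed set \emph{is} automatically a union of at most $\tau$ compacta disjoint from it (take closures of basic open sets). You correctly flagged the application of the replacement lemma over the ``wild'' compactum $\beta X$ as the main obstacle, but the resolution is not a routine check: it requires inserting this weight-reduction step, after which one must also verify (as the paper tacitly does) that $F=H\circ p$ still sends $X\setminus A$ into $I^{\tau}\setminus L$ even at points of $X\setminus A$ that $p$ identifies with points of $A$.
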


Problem (B), in some cases, can also be settled in a similar manner. Specifically, we consider spaces of the form $Y = I^{\tau}\setminus X$, where $X$ is a $Z_{\tau}$-set in $I^{\tau}$. For $\tau > \omega$, $I^{\tau}$ is indeed the Stone-\v{C}ech compactification of $Y$ (Lemma \ref{L:SC}). In this situation problem (B) becomes a part of a general problem of recovering properties of $X$ in terms of its complement $I^{\tau} \setminus X$. This leads us to considerations very similar to the study carried out in \cite{cdkm} for $\tau = \omega$. However, there is a major difference between the metrizable $(\tau = \omega$) and non-metrizable ($\tau > \omega$) cases. Roots of this difference, one could argue, lie in the fact that the topological type of the complement $I^{\omega}\setminus X$ of a $Z$-set in the Hilbert cube, while determining $X$'s shape, does not uniquely determine topological type of $X$. But if $\tau > \omega$, topological type of any $Z_{\tau}$-set $X$ in $I^{\tau}$ is completely datermined by its complement. This is apparently why we need to exploit metric-uniform invariants in the metrizable case (see \cite{cdkm}) and why we could remain in the topological category if $\tau > \omega$. Going back to problem (B), it turns out that -- as in problem (A) -- the complements $I^{\tau}\setminus L$ of finite complexes still play a critical role. In order to formulate our second result let us recall that the extension class $[L]$ of a complex is a collection of all extensionally equivalent complexes ($K$ is equivalent to $L$ if $K \in \operatorname{AE}(X)$ holds if and only if $L \in \operatorname{AE}(X)$ for any $X$). We say that $X \in \operatorname{AE}([L])$ if $X \in \operatorname{AE}(Y)$ whenever $L \in \operatorname{AE}(Y)$. Similarly, we can define a proper extensional class $\operatorname{AE}_{p}^{\tau}([I^{\tau}\setminus L])$ by agreeing that $Y \in \operatorname{AE}_{p}^{\tau}([I^{\tau}\setminus L])$, where $Y$ is a locally compact space of weight $\leq \tau$, if $Y \in \operatorname{AE}_{p}(M)$ for any locally compact space $M$ of weight $\leq \tau$ with $I^{\tau}\setminus L \in  \operatorname{AE}_{p}(M)$. We prove the following statement.

\begin{B}
Let $\tau > \omega$, $L$ be a compact $ANR$-space embedded into $I^{\tau}$ as a $Z_{\tau}$-set  and $X$ be a $Z_{\tau}$-set in $I^{\tau}$. Then the following conditions are equivalent:
\begin{itemize}
  \item[(i)]
$X \in \operatorname{AE}([L])$; 
  \item[(ii)] 
$I^{\tau}\setminus X \in \operatorname{AE}_{p}^{\tau}([I^{\tau}\setminus L])$.  
\end{itemize}
\end{B}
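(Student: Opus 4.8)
The plan is to transport both sides of the asserted equivalence onto Stone-\v{C}ech coronas — where Theorem~4.1 is available — and then to offset the fact that $X$, unlike $L$, is not assumed to be an $ANR$ by restricting the test spaces to those on which $I^{\tau}\setminus L$ is already a proper absolute extensor. Throughout one uses that, for $\tau>\omega$, a $Z_{\tau}$-set $W$ in $I^{\tau}$ coincides with the Stone-\v{C}ech corona of its complement: $\beta(I^{\tau}\setminus W)=I^{\tau}$ and $\beta(I^{\tau}\setminus W)\setminus(I^{\tau}\setminus W)=W$ (Lemma~\ref{L:SC}). Both $X$ and $L$ are of this form.

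First I would establish a corona criterion for condition~(i): for the given $X$, $X\in\operatorname{AE}([L])$ holds if and only if $X\in\operatorname{AE}(\beta M\setminus M)$ for every locally compact space $M$ of weight $\le\tau$ with $L\in\operatorname{AE}(\beta M\setminus M)$. One direction is immediate. For the converse, the point is that membership in $\operatorname{AE}([L])$ — $L$ being (extensionally equivalent to) a finite complex — can be detected on compacta of weight $\le\tau$, by \v{S}\v{c}epin's spectral theorems (\cite[Theorem~7.2.8]{book}, \cite[Theorem~8.1.6]{book}); and every compactum $Y$ of weight $\le\tau$ embeds in $I^{\tau}$ as a $Z_{\tau}$-set when $\tau>\omega$, so by Lemma~\ref{L:SC} it is the corona of the locally compact, weight-$\le\tau$ space $I^{\tau}\setminus Y$. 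Hence testing on coronas of weight-$\le\tau$ spaces suffices.

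Next comes the main ingredient: a version of Theorem~4.1 for $X$ in place of the $ANR$ $L$, valid for every locally compact $M$ of weight $\le\tau$ satisfying $I^{\tau}\setminus L\in\operatorname{AE}_{p}(M)$, namely $X\in\operatorname{AE}(\beta M\setminus M)$ if and only if $I^{\tau}\setminus X\in\operatorname{AE}_{p}(M)$. As in the proof of Theorem~4.1, $\beta$-functoriality together with Lemma~\ref{L:SC} converts a proper extension problem for maps into $I^{\tau}\setminus X$ into an extension problem for maps into $X$ defined over $\beta M\setminus M$, and back; the only non-formal step is to push an auxiliary map off the $Z_{\tau}$-set $X\subseteq I^{\tau}$ while keeping control of its behavior at infinity, for which I would use the Mapping Replacement Theorem for $Z_{\tau}$-sets and $Z_{\tau}$-set Unknotting. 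Where the proof of Theorem~4.1 used the local contractibility of $L$, one now uses the standing hypothesis $I^{\tau}\setminus L\in\operatorname{AE}_{p}(M)$, which forces $\beta M\setminus M$ to have extension dimension $\le[L]$ and so permits the required $[L]$-controlled replacement even in the absence of an $ANR$ structure on $X$. Granting this, the theorem is then formal: given $M$ locally compact of weight $\le\tau$, Theorem~4.1 applied to the $ANR$ $L$ replaces ``$L\in\operatorname{AE}(\beta M\setminus M)$'' by ``$I^{\tau}\setminus L\in\operatorname{AE}_{p}(M)$'' in the corona criterion, and the new ingredient replaces ``$X\in\operatorname{AE}(\beta M\setminus M)$'' by ``$I^{\tau}\setminus X\in\operatorname{AE}_{p}(M)$'' under precisely that hypothesis; what remains — ``$I^{\tau}\setminus L\in\operatorname{AE}_{p}(M)$ implies $I^{\tau}\setminus X\in\operatorname{AE}_{p}(M)$, for all such $M$'' — is the definition of $I^{\tau}\setminus X\in\operatorname{AE}_{p}^{\tau}([I^{\tau}\setminus L])$.

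I expect the hard part to be exactly this last point: showing that the $Z_{\tau}$-set replacement and unknotting machinery survives the loss of the $ANR$ hypothesis on $X$, as long as the corona $\beta M\setminus M$ has extension dimension at most $[L]$ — in other words, that it is enough to test $\operatorname{AE}_{p}^{\tau}([I^{\tau}\setminus L])$ on domains $M$ for which $I^{\tau}\setminus L$ is itself a proper absolute extensor. A secondary technical point, needed for the corona criterion, is the spectral reduction of $\operatorname{AE}([L])$-membership to compacta of weight $\le\tau$, together with the verification that the complements $I^{\tau}\setminus Y$ arising there do meet the hypotheses of Theorem~4.1.
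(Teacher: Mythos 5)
Your overall architecture matches the paper's: reduce (i) to a statement about coronas of locally compact spaces of weight $\le\tau$, use Theorem \ref{T:dim} applied to the $\operatorname{ANR}$ $L$ to translate the hypothesis ``$L \in \operatorname{AE}(\beta M\setminus M)$'' into ``$I^{\tau}\setminus L \in \operatorname{AE}_{p}(M)$'', and then prove an analogue of Theorem \ref{T:dim} with $X$ in place of $L$. The difficulty is that this last analogue --- your ``main ingredient'' --- is the entire non-formal content of the theorem, and the mechanism you propose for proving it does not work as described. You locate the obstruction correctly (the proof of Theorem \ref{T:dim} uses that $L$ is an $\operatorname{ANR}$ to extend a map $f\colon A \to L$, $A$ closed in $\beta M\setminus M$, to the closure of a neighborhood of $A$ in $\beta M$, and $X$ admits no such extension), but your proposed substitute --- that $I^{\tau}\setminus L\in\operatorname{AE}_{p}(M)$ forces $\operatorname{e-dim}(\beta M\setminus M)\le [L]$ and thereby ``permits an $[L]$-controlled replacement'' --- is not a viable route. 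In the direction where you need it (deducing $X\in\operatorname{AE}(\beta M\setminus M)$ from $I^{\tau}\setminus X\in\operatorname{AE}_{p}(M)$), the statement $X\in\operatorname{AE}([L])$ is the conclusion, not a hypothesis, so extension dimension of the corona gives you no way to extend a map into the non-$\operatorname{ANR}$ target $X$ beyond $A$; and the neighborhood to which you would need to extend lies in $\beta M$, not in the corona, so a bound on $\operatorname{e-dim}(\beta M\setminus M)$ is in any case aimed at the wrong space.

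The paper's resolution uses no extension-theoretic control at all; it is pure $Z_{\tau}$-set machinery, and the step of extending $f$ to a neighborhood of $A$ simply never occurs. Given a test compactum $Y$ of weight $\le\tau$ (realized as a $Z_{\tau}$-set in $I^{\tau}$, hence as $\beta(I^{\tau}\setminus Y)\setminus(I^{\tau}\setminus Y)$) and $f\colon A\to X$ with $A$ closed in $Y$, one takes an auxiliary copy $K$ of $I^{\tau}$ containing $A$ as a $Z_{\tau}$-set (Corollary \ref{C:embedding}) and uses Proposition \ref{P:adjusted} to re-embed $K$ into the ambient cube so that $K\cap Y=A$. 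Since $I^{\tau}$ is an absolute extensor, $f$ extends over all of $K$ into $I^{\tau}$, and Proposition \ref{L:main} pushes $K\setminus A$ off the $Z_{\tau}$-set $X$ while fixing $f$ on $A$. The result is a proper map $K\setminus A\to I^{\tau}\setminus X$ defined on a closed $C^{\ast}$-embedded subset of $I^{\tau}\setminus Y$, to which the hypothesis $I^{\tau}\setminus X\in\operatorname{AE}_{p}(I^{\tau}\setminus Y)$ applies directly; passing to Stone-\v{C}ech extensions recovers the desired extension $Y\to X$. (The converse direction is the easier half and goes through essentially as you outline, again with Proposition \ref{L:main} on $\beta M$ doing the pushing-off.) So the gap is concrete: you need to replace the ``$[L]$-controlled replacement'' idea by the attachment of an auxiliary Tychonov cube along $A$ via Proposition \ref{P:adjusted}; without that (or an equivalent device) the central step of your argument cannot be completed.
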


These considerations lead to the concept of a proper absolute extensor which we study in Section \ref{S:PAE} (see \cite{nov}, \cite{Michael} for related results). Note that $R^{n}$ is not a proper absolute extensor for any $n$ (while it is, of course, an absolute extensor). To see this in case $n=1$ note that the proper map $f \colon N \to R$, defined by 
\[ f(n) =
\begin{cases}
n\; , \; n\; \text{ is odd};\\
-n\; , n\; \text{is even}
\end{cases}
\]
\noindent does not have a proper extension $\bar{f} \colon R \to R$. On the other hand, $R^{n}_{+} = \{ (x_{i})_{i = 1}^{n} \in R^{n} \colon x_{n} \geq 0 \}$ is a proper absolute extensor for each $n$. Explanation of this fact (see Lemma \ref{L:manifold}) is that $R^{n}_{+}$ has a compactification (namely, $I^{n}$) which is an absolute extensor and that the corresponding corona ($I^{n-1}$) is also an absolute extensor, sitting in $I^{n}$ as a $Z$-set. We show that the only proper absolute extensor of  countable weight satisfying $DD^{n}P$ for each $n$ is the product $[0,1)\times I^{\omega}$ (Proposition \ref{P:1}). In the non-metrizable case we have the following statement.

\begin{C}\label{T:C}
A proper absolute extensor of weight $\tau > \omega$ is homeomorphic to the product $[0,1)\times I^{\tau}$ if and only if it has the same pseudocharacter at each point.  
\end{C}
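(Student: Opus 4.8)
The forward implication is immediate. Since $\psi(A\times B,(a,b))=\max\{\psi(A,a),\psi(B,b)\}$, for every $(t,x)\in[0,1)\times I^{\tau}$ we have $\psi([0,1)\times I^{\tau},(t,x))=\max\{\psi([0,1),t),\psi(I^{\tau},x)\}=\max\{\omega,\tau\}=\tau$; thus $[0,1)\times I^{\tau}$ indeed has weight $\tau$ and constant pseudocharacter (equal to $\tau$) at every point.

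For the converse, let $Y$ be a proper absolute extensor of weight $\tau>\omega$ with $\psi(Y,y)=\kappa$ for all $y\in Y$. By Definition \ref{D:main}, $Y$ is locally compact and Lindel\"of, hence $\sigma$-compact; local compactness gives $\psi(Y,y)=\chi(Y,y)$ at every point, so $Y$ has constant character $\kappa$. The plan is to realize $Y$ as the complement of a collared, absolute-extensor corona: using the structure theory of Section \ref{S:PAE} (Lemma \ref{L:manifold}), together with the fact that $\sigma$-compactness of $Y$ forces its remainder to have a countable base of neighbourhoods, one produces a compactification $\widetilde{Y}$ of $Y$, of weight $\tau$, which is a compact absolute extensor, whose corona $Z=\widetilde{Y}\setminus Y$ is an absolute extensor, is a $Z_{\tau}$-set in $\widetilde{Y}$, and is collared — i.e. there is an open embedding $Z\times[0,1)\hookrightarrow\widetilde{Y}$ onto a neighbourhood of $Z$ restricting to the identity on $Z=Z\times\{0\}$, so that $Z\times(0,1)$ is an open subset of $Y$. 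Note that $Z$ is a connected compact absolute extensor (absolute extensors are connected), hence has no isolated points, so $\chi(Z,z)\ge\omega$ for all $z\in Z$.

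Granting this, the character hypothesis on $Y$ propagates to all of $\widetilde{Y}$: for $z\in Z$ one has $\chi(\widetilde{Y},z)=\chi(Z\times[0,1),(z,0))=\chi(Z,z)$, and for any $t\in(0,1)$, $\chi(Z,z)=\chi(Z\times[0,1),(z,t))=\chi(Y,(z,t))=\kappa$, because $Z\times(0,1)$ is open in $Y$. Hence $\chi(\widetilde{Y},\cdot)\equiv\kappa$ on $\widetilde{Y}$ and $\chi(Z,\cdot)\equiv\kappa$ on $Z$. By \v{S}\v{c}epin's characterization of Tychonov cubes (\cite[Theorem 8.1.6]{book}; cf. \cite[Theorem 7.2.8]{book}), a compact absolute extensor all of whose points have character $\kappa$ is homeomorphic to $I^{\kappa}$; applied to $\widetilde{Y}$ this gives $\widetilde{Y}\cong I^{\kappa}$, and comparing weights ($w(\widetilde{Y})=\tau$ since $Y\subseteq\widetilde{Y}$ has weight $\tau$ and weight is monotone, while $w(\widetilde{Y})\le\tau$ by construction) we get $\kappa=\tau$. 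Thus $\widetilde{Y}\cong I^{\tau}$ and, by the same theorem, $Z\cong I^{\tau}$. To finish, write $I^{\tau}=I^{\tau}\times I$ and observe that the face $I^{\tau}\times\{1\}$ is a $Z_{\tau}$-set in it; since $Z$ and $I^{\tau}\times\{1\}$ are homeomorphic $Z_{\tau}$-sets of $I^{\tau}$, the $Z_{\tau}$-set Unknotting Theorem yields a homeomorphism of pairs $(\widetilde{Y},Z)\cong(I^{\tau}\times I,\,I^{\tau}\times\{1\})$, and deleting the coronas gives $Y=\widetilde{Y}\setminus Z\cong I^{\tau}\times I\setminus I^{\tau}\times\{1\}=I^{\tau}\times[0,1)=[0,1)\times I^{\tau}$.

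The delicate step, which I expect to be the main obstacle, is producing the compactification $\widetilde{Y}$ with all the listed properties (compact absolute extensor, absolute-extensor corona, $Z_{\tau}$-set, collared) from the bare hypothesis that $Y$ is a proper absolute extensor; this is where the proper extension property must genuinely be used, presumably via a mapping-replacement/absorption argument of the type developed around Lemma \ref{L:manifold} (and where the assumption $\tau>\omega$, via the availability of \v{S}\v{c}epin's theorem and $Z_{\tau}$-set unknotting as purely topological tools, is essential). By contrast, the constant-pseudocharacter hypothesis is used only at the very end: it is exactly what prevents $\widetilde{Y}$ (or $Z$) from being a Tychonov cube with lower-character faces adjoined — which would still be a compact absolute extensor — and thereby forces both to be full cubes, after which \v{S}\v{c}epin's characterization and $Z_{\tau}$-set unknotting apply mechanically.
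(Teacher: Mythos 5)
The heart of your argument is the claim that a proper absolute extensor $Y$ of weight $\tau>\omega$ admits a compactification $\widetilde{Y}$ which is a compact absolute extensor whose corona $Z=\widetilde{Y}\setminus Y$ is itself an absolute extensor, is a $Z_{\tau}$-set in $\widetilde{Y}$, and is collared. You never prove this, and nothing in the paper supplies it: the only compactification results available (Lemma \ref{L:compactification}, Corollary \ref{C:iff}) are genuinely countable-weight statements, resting on the one-point compactification and on Chapman-type tools. For $\tau>\omega$ the one-point compactification is useless here (its corona is a single $G_{\delta}$ point, so $\widetilde{Y}$ cannot be character-homogeneous and no collar is in sight), and producing a ``large'' corona that is simultaneously an AE, a $Z_{\tau}$-set, and collared from the bare hypothesis $Y\in\operatorname{AE}_{p}$ is essentially equivalent in difficulty to the theorem itself (a posteriori $\widetilde{Y}=I\times I^{\tau}$, $Z=\{1\}\times I^{\tau}$, but that is what has to be shown). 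So the proposal, as written, reduces the theorem to an unproven statement that carries all of the weight; the remaining steps (character propagation through the collar, \v{S}\v{c}epin's characterization of $I^{\kappa}$, $Z_{\tau}$-set unknotting applied to $Z$ and the face $I^{\tau}\times\{1\}$) are fine mechanically, except that you should also rule out $\kappa=\omega$ separately: the statement ``a compact AE all of whose points have character $\kappa$ is $I^{\kappa}$'' is \v{S}\v{c}epin's theorem only for uncountable $\kappa$ and is false for $\kappa=\omega$.

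For contrast, the paper avoids compactifying altogether. Starting from a closed copy of $X$ in $[0,1)\times I^{A}$ and a proper retraction $r\colon[0,1)\times I^{A}\to X$ (Proposition \ref{P:retract}), it runs \v{S}\v{c}epin's spectral argument (the proof of \cite[Theorem 7.2.8]{chibook}) directly on $X$: the constant-pseudocharacter hypothesis is what makes every short projection of the resulting continuous well-ordered spectrum of length $\tau$ a trivial bundle with fiber $I^{\omega}$, yielding $X\cong X_{0}\times I^{\tau}$ with $X_{0}$ a locally compact proper AE of countable weight; then Corollary \ref{C:product} ($X_{0}\times I^{\omega}\approx[0,1)\times I^{\omega}$, via Toru\'{n}czyk's theorem in the one-point compactification) finishes the proof. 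If you want to salvage your route, you would need to supply an uncountable-weight analogue of Corollary \ref{C:iff}(iv) with the corona a collared $Z_{\tau}$-set, and at present no such lemma is available.
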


The paper is organized as follows. In Section \ref{S:TC}, based on modified versions of \v{S}\v{c}epin's Spectral Theorem, we obtain characterization of $Z_{\tau}$-sets in the Tychonov cube $I^{\tau}$ and prove the mapping replacement results (Propositions \ref{L:main} and \ref{P:adjusted}). These are then used in Section \ref{S:corona} to prove Theorems \ref{T:dim} and \ref{T:main}. In Section \ref{S:category} we extend results, obtained in \cite{cdkm} for the Hilbert cube, to the Tychonov cube. Namely, we describe topological and homotopy categories of $Z_{\tau}$-sets in $I^{\tau}$ in terms of certain naturally defined categories of their complements. Considerations here involve certain concepts of coarse geometry which are still relevant in the topological setting. In the final Section \ref{S:PAE} we investigate concept of proper absolute extensor and prove Theorem \ref{T:1}.


\section{Preliminaries}\label{S:pre}

Unless noted otherwise below we consider only locally compact Tychonov spaces and continuous maps.  A map $f \colon X \to Y$ is proper if $f^{-1}(C)$ is compact for any compact $C \subset Y$. Note that the class of proper maps between locally compact spaces coincides with the class of perfect maps (a map is perfect if it is closed and has compact point inverses). A set $F \subset X$ is $z$-embedded in $X$ if for every functionally closed (in $F$) set $Z \subset F$ there exists a functionally closed set $\widetilde{Z}$ in $X$ such that $Z = F \cap \widetilde{Z}$. A set $F \subset X$ is $C^{\ast}$-embedded if every bounded real-valued continuous function, defined on $F$, has a bounded and continuous extension, defined on $X$.

\begin{lem}\label{L:SC}
Let $\tau > \omega$ and $X$ be an open and $G_{\delta}$-dense subset of the Tychonov cube $I^{\tau}$. Then
\begin{itemize}
\item[(i)]
$X$ is pseudocompact and $\beta X = I^{\tau}$;
\item[(ii)]
If $F$ is a functionally closed subset of $I^{\tau}$, then $F\cap X$ is $C^{\ast}$-embedded in $X$;
\item[(iii)]
If $G$ is an open subset of $X$, then $\operatorname{cl}_{X}G$ is $C^{\ast}$-embedded in $X$.
\end{itemize}
\end{lem}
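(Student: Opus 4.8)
The plan is to derive all three statements from the standard machinery of $G_\delta$-dense subsets of Tychonov cubes together with the fact that, for $\tau>\omega$, every nonempty $G_\delta$ of $I^\tau$ has nonempty interior (being a countable intersection of open sets, each of which is determined by countably many coordinates, so the intersection is itself a preimage under a projection onto countably many coordinates of a $G_\delta$ in the metrizable cube $I^\omega$ — and such preimages, when nonempty, have nonempty interior because a nonempty $G_\delta$ in $I^\omega$ has nonempty closure with nonempty interior in the appropriate subcube). First I would prove (i): pseudocompactness of $X$ follows because $X$ is $G_\delta$-dense in the compact space $I^\tau$; indeed if $f\colon X\to\mathbb R$ were continuous and unbounded, the sets $\{x : |f(x)|\ge n\}$ would be a decreasing sequence of nonempty functionally closed subsets of $X$ whose closures in $I^\tau$ form a decreasing sequence of nonempty compacta, hence have a common point $p\in I^\tau$; since $\{p\}$ meets $X$ along a $G_\delta$ of $X$... more cleanly, one invokes the classical theorem (Glicksberg) that a dense subspace $X$ of a product of compacta is pseudocompact iff it is $G_\delta$-dense, and then that a dense pseudocompact subspace has the same Stone-\v{C}ech compactification as the ambient compact space, giving $\beta X=I^\tau$. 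I would cite this from \cite{chibook} or prove the $G_\delta$-dense $\Rightarrow$ pseudocompact direction directly as above.

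For (ii), let $F$ be functionally closed in $I^\tau$, say $F=g^{-1}(0)$ for continuous $g\colon I^\tau\to I$. Then $F$ is itself a $G_\delta$-dense-in-$F$... no: the key point is that $F$, being functionally closed in the cube, is again (homeomorphic to, or at least behaves like) a cube-type factor, and $F\cap X$ is $G_\delta$-dense in $F$: any nonempty $G_\delta$ subset $H$ of $F$ is a $G_\delta$ subset of $I^\tau$ as well (intersect with $F=g^{-1}(0)$ which is itself $G_\delta$), hence has nonempty interior in $I^\tau$, so meets the $G_\delta$-dense set $X$, whence $H\cap X\ne\varnothing$. Now apply (i) with $F$ in place of $I^\tau$: $F\cap X$ is pseudocompact and $\beta(F\cap X)=F$. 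Since $X$ is also pseudocompact with $\beta X=I^\tau$, and $F$ is functionally closed in $I^\tau$ (so the inclusion $F\hookrightarrow I^\tau$ realizes $\beta(F\cap X)\hookrightarrow\beta X$ as the closure of $F\cap X$), a bounded continuous function on $F\cap X$ extends over $F=\beta(F\cap X)$ and then restricts to an extension over $X$; hence $F\cap X$ is $C^\ast$-embedded in $X$. The point requiring care is the compatibility of the two Stone-\v{C}ech compactifications, i.e. that $\operatorname{cl}_{\beta X}(F\cap X)=\beta(F\cap X)$, which holds because $F\cap X$ is $G_\delta$-dense in the closed (indeed functionally closed) set $F$ of the compactum.

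For (iii), let $G$ be open in $X$; I want $\operatorname{cl}_X G$ to be $C^\ast$-embedded. The strategy is to reduce to (ii) by enlarging $G$ to an open set of $I^\tau$. Write $G = U\cap X$ for some open $U\subset I^\tau$. Since $X$ is $G_\delta$-dense and open in $I^\tau$, one checks $\operatorname{cl}_{I^\tau}(U\cap X)=\operatorname{cl}_{I^\tau}U$ (density of $X$) and hence $\operatorname{cl}_X G = \operatorname{cl}_{I^\tau}(U)\cap X$. Now $\operatorname{cl}_{I^\tau}U$ is a regular closed subset of $I^\tau$; the remaining issue is that a regular closed set need not be functionally closed, so (ii) does not apply verbatim. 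I would instead argue that $\operatorname{cl}_X G$ is itself open-and-$G_\delta$-dense in the regular closed set $\operatorname{cl}_{I^\tau}U$ — openness because $\operatorname{cl}_X G = \operatorname{cl}_{I^\tau}(U)\cap X$ is the trace of an open-in-$I^\tau$ neighborhood once one passes to the manifold-with-boundary-like structure, or more safely: $\operatorname{cl}_{I^\tau}U$ is a retract-neighborhood-free compactum in which $\operatorname{cl}_X G$ is $G_\delta$-dense (every nonempty $G_\delta$ of $\operatorname{cl}_{I^\tau}U$ has nonempty interior in $\operatorname{cl}_{I^\tau}U$, hence in $I^\tau$ meets $X$), so $\operatorname{cl}_X G$ is pseudocompact with $\beta(\operatorname{cl}_X G)=\operatorname{cl}_{I^\tau}U$. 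Then, since $\operatorname{cl}_{I^\tau}U$ is the closure in $\beta X=I^\tau$ of the open set $G$ of $X$, standard facts about Stone-\v{C}ech compactifications of open subsets (the closure in $\beta X$ of an open $G\subset X$ is $\beta(\operatorname{cl}_X G)$ when $X$ is, say, normal — here replaced by the pseudocompactness computation) give that bounded continuous functions on $\operatorname{cl}_X G$ extend over $\operatorname{cl}_{I^\tau}U$ and restrict back to $X$, yielding $C^\ast$-embedding.

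The main obstacle I anticipate is precisely this last point in (iii): controlling the Stone-\v{C}ech compactification of a regular closed (not functionally closed) subset and verifying that its closure in $I^\tau=\beta X$ is the right compactification. The clean way around it is to observe that for $\tau>\omega$ every regular closed subset of $I^\tau$ is a $G_\delta$-dense... no — rather, that the relevant traces are $G_\delta$-dense in their closures, which is exactly where the hypothesis $\tau>\omega$ (nonempty $G_\delta$'s have interior) is used decisively, and then everything reduces to applying part (i) to an appropriate closed subcube-like factor. If a fully self-contained argument for (iii) proves delicate, it can be quoted from \cite[Chapter 8]{chibook}, where pseudocompactness and $C^\ast$-embedding phenomena in $G_\delta$-dense subspaces of Tychonov cubes are treated systematically.
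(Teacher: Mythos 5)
Your architecture for (i) and (ii) --- reduce everything to showing that the relevant closure in $I^{\tau}$ is the Stone-\v{C}ech compactification of its trace on $X$, then extend by Tietze over $I^{\tau}$ and restrict --- is essentially the paper's. But two of your auxiliary claims are false as stated. First, a nonempty $G_{\delta}$ in $I^{\tau}$ need \emph{not} have nonempty interior: for a countably infinite $S\subset T$ and $q\in I^{S}$, the slice $\pi_{S}^{-1}(q)$ is a nonempty closed $G_{\delta}$ with empty interior. This is harmless in (ii), where $G_{\delta}$-density of $X$ already gives $H\cap X\neq\emptyset$ directly, but you lean on it again in (iii). Second, a dense pseudocompact subspace of a compact space $K$ need not satisfy $\beta X=K$ (Mr\'{o}wka $\Psi$-spaces over suitable maximal almost disjoint families are pseudocompact and dense in their one-point compactifications, which are not their Stone-\v{C}ech compactifications). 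What actually carries the argument is that a dense subspace of $I^{\tau}$ --- and of any functionally closed $F\subset I^{\tau}$, these being $\operatorname{AE}(0)$-compacta --- is $z$-embedded; combined with the absence of nonempty functionally closed sets in the complement, this identifies $I^{\tau}$ (resp.\ $F$) with the Hewitt, hence Stone-\v{C}ech, compactification of the trace. That is precisely the citation chain the paper uses, so for (i) and (ii) your gaps are repairable by the references you yourself point to.

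Part (iii) is where the genuine gap lies. You correctly reduce to the regular closed set $\operatorname{cl}_{I^{\tau}}G$ (note $G$ is open in $I^{\tau}$, not merely in $X$, since $X$ is open), and you correctly worry that a regular closed set need not be functionally closed --- but in $I^{\tau}$ it is: by \v{S}\v{c}epin's theorem, products of metrizable spaces are perfectly $\kappa$-normal, so the closure of every open subset of $I^{\tau}$ is a zero-set. With that single fact, (iii) is literally an instance of (ii), which is the paper's one-line proof. Your substitute argument does not close the hole: to get $G_{\delta}$-density of $\operatorname{cl}_{X}G$ in $\operatorname{cl}_{I^{\tau}}G$ you need a nonempty $G_{\delta}$ of $\operatorname{cl}_{I^{\tau}}G$ to be a $G_{\delta}$ of $I^{\tau}$, which already requires knowing $\operatorname{cl}_{I^{\tau}}G$ is a $G_{\delta}$ of $I^{\tau}$ (again the \v{S}\v{c}epin fact), and your fallback via ``nonempty $G_{\delta}$'s have interior'' is the false claim above; after that you would still need the Stone-\v{C}ech identification for a compactum you have not shown to be nice. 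So (iii) as written does not go through; the missing ingredient is perfect $\kappa$-normality of the Tychonov cube.
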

\begin{proof}
(i). Since $X$ is dense in $I^{\tau}$ it follows (see \cite[Corollary 6.4.7]{chibook}) that $X$ is $z$-embedded in $I^{\tau}$. Since, by assumption, $I^{\tau}\setminus X$ does not contain functionally closed subsets of $I^{\tau}$, we conclude (\cite[Proposition 1.1.22]{chibook}) that $I^{\tau}$ is the Hewitt realcompactification of $X$. Compactness of $I^{\tau}$ implies that $I^{\tau}$ is actually the Stone-\v{C}ech compactification of $X$ and $X$ is pseudocompact.

(ii). Since $X$ is $G_{\delta}$-dense, it follows that $F \cap X \neq \emptyset$. By (i) and \cite[8D.1]{gj}, $F = \operatorname{cl}_{I^{\tau}}(F \cap X)$. Since the cube $I^{\tau}$ is an $\operatorname{AE}(0)$-space, it follows from \cite[Propositions 6.1.8, 6.4.9]{chibook} that $F$ itself is an $\operatorname{AE}(0)$-space. Consequenlty, by \cite[Proposition 1.1.21(ii)]{chibook}, $F \cap X$ is $z$-embedded in $F$. Since $F\cap X$ is $G_{\delta}$ -dense in $F$, it follows from \cite[Proposition 1.1.22]{chibook} that $F$ is the Stone-\v{C}ech compactification of $F\cap X$. Then $F \cap X$ is $C^{\ast}$-embedded in $X$. 

(iii). Clearly, $\operatorname{cl}_{X}G = X \cap \operatorname{cl}_{I^{\tau}}G$. Since the latter set is functionally closed in $I^{\tau}$, the needed conclusion follows from (ii).
\end{proof}

Extension theory -- a generalization of the classical dimension theory -- as developed by A.~Dranishnikov, as well as certain facts from infinite-dimensional topology  (see \cite{chihand} for unified treatment of both) are used without specific references.


\section{$Z_{\tau}$-sets in the Tychonov Cube}\label{S:TC}
In this section we study certain properties of $Z_{\tau}$-sets in the Tychonov cube $I^{\tau}$ introduced in \cite{chibook}.

\subsection{Spectral Theorem -- revisited}\label{SS:st}

We begin by establishing needed versions of \v{S}\v{c}epin's Spectral Theorem \cite[Theorem 1.3.4]{chibook}.

\begin{pro}\label{P:spectralwcontrol}
Let $\tau \geq \omega$, $|T| > \tau$, $T_{0} \subseteq T$, $|T_{0}| < |T|$ and $g \colon \prod\{ X_{t} \colon t \in T\} \to \prod\{ X_{t} \colon t \in T\}$ be a map of the product of compact metrizable spaces such that $\pi_{T_{0}}\circ g = \pi_{T_{0}}$. Then the set

\[
 {\mathcal M}_{(g,T_{0})}=\biggl\{ R \subseteq \exp_{\tau}(T\setminus T_{0}) \colon \exists g_{T_{0}\cup R} \colon \prod\{ X_{t} \colon t \in T_{0} \cup R\} \to \prod\{ X_{t} \colon t \in T_{0} \cup R\}\biggr.\]
\[ \; \text{with}\;
 \biggl. \pi_{T_{0}\cup R}\circ g = g_{T_{0}\cup R}\circ \pi_{T_{0}\cup R}\biggr\}     
\]

\noindent is cofinal and $\tau$-closed in $\exp_{\tau}(T\setminus T_{0})$.
\end{pro}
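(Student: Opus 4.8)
The plan is to verify the two closure/cofinality properties directly from the definitions, exploiting the metrizability of the factors $X_t$ (so each $X_t$ has a countable base) together with the hypothesis $\pi_{T_0}\circ g = \pi_{T_0}$. The key elementary observation is that for a subset $S\subseteq T$ with $T_0\subseteq S$, the map $g$ factors through $\pi_S$ (i.e.\ there is $g_S$ with $\pi_S\circ g = g_S\circ \pi_S$) if and only if the value $\pi_s(g(x))$ depends only on $\pi_S(x)$ for every coordinate $s\in S$; and since $X_s$ is metrizable with a countable base $\{U^s_n\}$, this in turn is equivalent to: for each $s\in S$ and each $n$, the set $(\pi_s\circ g)^{-1}(U^s_n)$ is of the form $\pi_S^{-1}(W)$ for some open $W$, equivalently $(\pi_s\circ g)^{-1}(U^s_n)$ depends on only countably many coordinates and those coordinates can be taken inside $S$. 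This reduces everything to a bookkeeping argument about countable ``supports'' of preimages of basic open sets, which is the standard engine behind Shchepin's Spectral Theorem.

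First I would prove $\tau$-closedness. Suppose $\{R_\alpha : \alpha<\lambda\}$ is a chain (or $\tau$-directed family) in $\mathcal M_{(g,T_0)}$ with $\lambda\le\tau$, and let $R=\bigcup_\alpha R_\alpha$; since each $R_\alpha\in\exp_\tau(T\setminus T_0)$ and $\lambda\le\tau$, we have $|R|\le\tau$, so $R\in\exp_\tau(T\setminus T_0)$. For each $s\in T_0\cup R$ we have $s\in T_0\cup R_\alpha$ for some $\alpha$; since $R_\alpha\in\mathcal M_{(g,T_0)}$, the map $\pi_s\circ g$ factors through $\pi_{T_0\cup R_\alpha}$, hence a fortiori through $\pi_{T_0\cup R}$. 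As this holds for every coordinate $s\in T_0\cup R$, the product map $\pi_{T_0\cup R}\circ g$ factors through $\pi_{T_0\cup R}$, giving the required $g_{T_0\cup R}$; thus $R\in\mathcal M_{(g,T_0)}$.

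Next I would prove cofinality. Given any $R_0\in\exp_\tau(T\setminus T_0)$, build an increasing sequence $R_0\subseteq R_1\subseteq\cdots$ in $\exp_\tau(T\setminus T_0)$ as follows. Having chosen $R_k$, for each coordinate $s\in T_0\cup R_k$ and each basic open $U^s_n\subseteq X_s$, the preimage $(\pi_s\circ g)^{-1}(U^s_n)$ is an open subset of the product $\prod_{t\in T} X_t$ of compact metric factors, hence—being a union of basic cylinders—depends on at most countably many coordinates; collect all these (countably many per $s$, and $|T_0\cup R_k|\le\tau$ of them) together with $T_0$ into a set, intersect with $T\setminus T_0$, and call a set of size $\le\tau$ containing $R_k$ obtained this way $R_{k+1}$. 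Here I use $\pi_{T_0}\circ g=\pi_{T_0}$ to ensure that for $s\in T_0$ the support of $(\pi_s\circ g)^{-1}(U^s_n)$ is $\{s\}$, so $T_0$-coordinates contribute nothing new and the construction stays inside $T\setminus T_0$ for the growth. Let $R=\bigcup_k R_k$; then $|R|\le\tau$ and for every $s\in T_0\cup R$ and every basic $U^s_n$, the support of $(\pi_s\circ g)^{-1}(U^s_n)$ is contained in $T_0\cup R$, which says exactly that $\pi_s\circ g$ factors through $\pi_{T_0\cup R}$. Taking the product over all $s\in T_0\cup R$ yields $g_{T_0\cup R}$, so $R\in\mathcal M_{(g,T_0)}$ and $R\supseteq R_0$.

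The main obstacle I anticipate is purely notational/careful bookkeeping rather than conceptual: one must be scrupulous that the ``supports'' one extracts at each stage genuinely lie in $T\setminus T_0$ (handled by the normalization $\pi_{T_0}\circ g=\pi_{T_0}$), that countable unions of countable supports over a $\le\tau$-indexed family stay of size $\le\tau$, and—in the cofinality step—that ``factoring through $\pi_{T_0\cup R}$ coordinatewise'' correctly assembles into a single map $g_{T_0\cup R}$ with $\pi_{T_0\cup R}\circ g = g_{T_0\cup R}\circ\pi_{T_0\cup R}$ (this is immediate from the universal property of the product, since $g_{T_0\cup R}$ is defined by its coordinate functions). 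I would also remark that this is essentially a ``controlled'' reformulation of \cite[Theorem 1.3.4]{chibook}, so one could alternatively deduce it by applying the cited Spectral Theorem to the inverse system of partial products and checking that the extra condition $\pi_{T_0}\circ g=\pi_{T_0}$ is preserved along the spectrum; I expect the direct argument above to be cleaner.
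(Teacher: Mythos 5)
Your overall architecture is viable, but it is a from-scratch re-derivation of the spectral machinery rather than the paper's argument, and the central step of your cofinality argument is justified incorrectly. You claim that $(\pi_s\circ g)^{-1}(U^s_n)$, ``being a union of basic cylinders, depends on at most countably many coordinates.'' That inference is false for open sets in general: in $\{0,1\}^{\omega_1}$ the complement of the all-zero point is a union of subbasic cylinders yet depends on all $\omega_1$ coordinates. What you actually need is the factorization theorem for continuous maps of products: a continuous map from a product of compact (or merely separable, hence ccc) metrizable spaces into a second-countable space depends on only countably many coordinates (Engelking/Mibu/Gleason; equivalently, cozero sets in such products have countable support). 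This is a genuine theorem --- essentially the countable case of the very Spectral Theorem being generalized --- so it must be invoked or proved, not read off from ``open equals union of cylinders.'' With that substitution your bookkeeping does go through; in particular your use of $\pi_{T_0}\circ g=\pi_{T_0}$ to prevent the $T_0$-coordinates from inflating $|R_{k+1}|$ beyond $\tau$ addresses a real issue (since $|T_0|$ may well exceed $\tau$), and your $\tau$-closedness argument is fine.

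For comparison, the paper's proof is the ``alternative'' you mention at the end, and it is much shorter: it cites \v{S}\v{c}epin's Spectral Theorem to get that $\mathcal M_g$ is cofinal and $\tau$-closed in $\exp_\tau T$; then, given $S\in\exp_\tau(T\setminus T_0)$, it picks $\tilde R\in\mathcal M_g$ with $S\subseteq\tilde R$, observes that $g_{\tilde R}$ fixes the coordinates in $\tilde R\cap T_0$ because $\pi_{T_0}\circ g=\pi_{T_0}$, defines $g_{T_0\cup\tilde R}$ as the diagonal product of $\pi_{T_0}^{T_0\cup\tilde R}$ with $\pi_{\tilde R\setminus T_0}^{\tilde R}\circ g_{\tilde R}\circ\pi_{\tilde R}^{T_0\cup\tilde R}$, and sets $R=\tilde R\setminus T_0$. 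That route confines all of the countable-support machinery to the cited theorem and reduces the new content to a two-line diagonal-product verification; if you want a self-contained direct proof instead, you must supply the factorization theorem explicitly.
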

\begin{proof}
By \cite[Theorem 1.3.4]{chibook}, the set

\[ 
{\mathcal M}_{g}=\Biggl\{ R \in \exp_{\tau}T \colon \exists  g_{R} \colon \prod\{ X_{t} \colon t \in R\} \to \prod\{ X_{t} \colon t \in R\}\;\biggr.\]
\[\; \text{with}\; \biggl. \pi_{R}\circ g = g_{R}\circ \pi_{R}\Biggr\}
\]

\noindent is cofinal and $\tau$-closed in $\exp_{\tau}T$.

Let $S \in \exp_{\tau}(T\setminus T_{0})$ and choose $\tilde{R} \in {\mathcal M}_{g}$ such that $S \subseteq \tilde{R}$. The corresponding $g_{\tilde{R}}$ does not change the $X_{t}$-coordinate for $t \in \tilde{R}\cap T_{0}$ (since $\pi_{T_{0}}\circ g = \pi_{T_{0}}$). Consequently the diagonal product

\begin{multline*}
 g_{T_{0}\cup \tilde{R}} = \pi_{T_{0}}^{T_{0}\cup \tilde{R}}\triangle \pi_{\tilde{R}\setminus T_{0}}^{\tilde{R}}g_{\tilde{R}} \pi_{\tilde{R}}^{T_{0}\cup \tilde{R}} \colon \prod\{ X_{t} \colon t \in T_{0}\cup \tilde{R}\} \to\\ \prod\{ X_{t}\colon t \in T_{0}\}\times \prod\{ X_{t} \colon t \in \tilde{R}\setminus T_{0}\} \end{multline*}

\noindent is well defined. Set $R = \tilde{R}\setminus T_{0}$. Obviously, $S \subseteq R$ and $R \in {\mathcal M}_{(g,T_{0})}$ which proves that ${\mathcal M}_{(g,T_{0})}$ is cofinal in $\exp_{\tau}(T\setminus T_{0})$. The $\tau$-completeness of ${\mathcal M}_{(g,T_{0})}$ in $\exp_{\tau}(T\setminus T_{0})$ is obvious.
\end{proof}


\begin{cor}\label{C:spectralwcontrol}
Let $\tau \geq \omega$, $|T| > \tau$, $T_{0} \subseteq T$, $|T_{0}| < |T|$ and $f \colon X\to Y$ be a map between closed subspaces of the Tychonov cube $I^{T}$. If $\pi_{T_{0}}\circ f = \pi_{T_{0}}|X$. Then the set

\[
 {\mathcal M}_{(f,T_{0})}=\biggl\{ R \subseteq \exp_{\tau}(T\setminus T_{0}) \colon \exists f_{T_{0}\cup R} \colon \pi_{T_{0}\cup R}(X) \to \pi_{T_{0}\cup R}(Y )\; \text{with}\biggr.\]
\[ 
 \biggl. \pi_{T_{0}\cup R}\circ f = f_{T_{0}\cup R}\circ \pi_{T_{0}\cup R}|X\biggr\}     
\]

\noindent is cofinal and $\tau$-closed in $\exp_{\tau}(T\setminus T_{0})$.
\end{cor}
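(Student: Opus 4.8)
The plan is to deduce the statement from Proposition~\ref{P:spectralwcontrol} by first extending $f$ to a self-map of the whole cube $I^{T}$. Since $X$ is closed in the compactum $I^{T}$ it is itself compact, so, composing $f$ with the inclusion $Y \hookrightarrow I^{T}$ and applying Tietze's extension theorem coordinatewise, I would define a map $g \colon I^{T} \to I^{T}$ by letting $\pi_{t}\circ g = \pi_{t}$ for $t \in T_{0}$ and letting $\pi_{t}\circ g$ be a continuous extension of $\pi_{t}\circ f \colon X \to I$ for $t \in T\setminus T_{0}$. Then $g$ is continuous (it is the diagonal of its factor maps), $\pi_{T_{0}}\circ g = \pi_{T_{0}}$, and $g|X = f$: on the coordinates indexed by $T\setminus T_{0}$ this is the definition, and on the coordinates indexed by $T_{0}$ it follows from the hypothesis $\pi_{T_{0}}\circ f = \pi_{T_{0}}|X$. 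Hence Proposition~\ref{P:spectralwcontrol} applies (with $X_{t}=I$ for all $t$), and the set $\mathcal{M}_{(g,T_{0})}$ is cofinal and $\tau$-closed in $\exp_{\tau}(T\setminus T_{0})$.

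Next I would show $\mathcal{M}_{(g,T_{0})}\subseteq \mathcal{M}_{(f,T_{0})}$, which already gives cofinality of the latter. Fix $R \in \mathcal{M}_{(g,T_{0})}$ with witnessing map $g_{T_{0}\cup R} \colon I^{T_{0}\cup R}\to I^{T_{0}\cup R}$. For every $x \in X$ we have $g_{T_{0}\cup R}(\pi_{T_{0}\cup R}(x)) = \pi_{T_{0}\cup R}(g(x)) = \pi_{T_{0}\cup R}(f(x)) \in \pi_{T_{0}\cup R}(Y)$. Therefore the restriction $f_{T_{0}\cup R} = g_{T_{0}\cup R}\,|\,\pi_{T_{0}\cup R}(X)$ carries $\pi_{T_{0}\cup R}(X)$ into $\pi_{T_{0}\cup R}(Y)$ and satisfies $\pi_{T_{0}\cup R}\circ f = f_{T_{0}\cup R}\circ \pi_{T_{0}\cup R}|X$, i.e. $R \in \mathcal{M}_{(f,T_{0})}$.

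It then remains to verify that $\mathcal{M}_{(f,T_{0})}$ is $\tau$-closed (this does not come for free, since a superset of a $\tau$-closed set need not be $\tau$-closed). Let $\{R_{\alpha}\}$ be a directed subfamily of $\mathcal{M}_{(f,T_{0})}$ of cardinality $\leq \tau$ with $R=\bigcup_{\alpha}R_{\alpha}$. Since each restricted projection $\pi_{T_{0}\cup R_{\alpha}}|X \colon X \to \pi_{T_{0}\cup R_{\alpha}}(X)$ is a surjection, hence an epimorphism in the category of compacta, the identities $\pi_{T_{0}\cup R_{\beta}}\circ f = f_{T_{0}\cup R_{\beta}}\circ \pi_{T_{0}\cup R_{\beta}}|X$ force the maps $f_{T_{0}\cup R_{\alpha}}$ to commute with the bonding projections $\pi^{T_{0}\cup R_{\alpha}}_{T_{0}\cup R_{\beta}}$ whenever $R_{\beta}\subseteq R_{\alpha}$; thus they constitute a morphism of the inverse systems $\{\pi_{T_{0}\cup R_{\alpha}}(X)\}\to\{\pi_{T_{0}\cup R_{\alpha}}(Y)\}$. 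As $\pi_{T_{0}\cup R}(X)=\varprojlim \pi_{T_{0}\cup R_{\alpha}}(X)$ and $\pi_{T_{0}\cup R}(Y)=\varprojlim \pi_{T_{0}\cup R_{\alpha}}(Y)$, the induced limit map $f_{T_{0}\cup R}$ exists and witnesses $R \in \mathcal{M}_{(f,T_{0})}$.

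The only genuinely delicate point is this last compatibility argument, where surjectivity (epimorphism property) of the restricted projections is essential; the extension step and the restriction step are routine. Alternatively one could avoid the auxiliary map $g$ altogether by quoting the version of \v{S}\v{c}epin's Spectral Theorem for morphisms of inverse spectra directly, but reusing Proposition~\ref{P:spectralwcontrol} keeps the argument self-contained within the present section.
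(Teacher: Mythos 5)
Your proof is correct and follows essentially the same route as the paper: extend $f$ to a self-map $g$ of $I^{T}$ with $g|X=f$ and $\pi_{T_{0}}\circ g=\pi_{T_{0}}$, apply Proposition~\ref{P:spectralwcontrol}, and obtain $f_{T_{0}\cup R}$ by restricting $g_{T_{0}\cup R}$ to $\pi_{T_{0}\cup R}(X)$. The only difference is that you spell out the coordinatewise Tietze construction of $g$ and give the explicit inverse-limit/epimorphism argument for $\tau$-closedness (a point the paper leaves implicit, and which, as you rightly note, does not follow merely from containing a $\tau$-closed cofinal subset).
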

\begin{proof}
Let $g \colon I^{T}  \to I^{T}$ be a map such that $g|X = f$ and $\pi_{T_{0}}\circ g = \pi_{T_{0}}$. By Proposition \ref{P:spectralwcontrol}, the set

\[
 {\mathcal M}_{(g,T_{0})}=\biggl\{ R \subseteq \exp_{\tau}(T\setminus T_{0}) \colon \exists g_{T_{0}\cup R} \colon \prod\{ X_{t} \colon t \in T_{0} \cup R\} \to \prod\{ X_{t} \colon t \in T_{0} \cup R\}\biggr.\]
\[ \; \text{with}\;
 \biggl. \pi_{T_{0}\cup R}\circ g = g_{T_{0}\cup R}\circ \pi_{T_{0}\cup R}\biggr\}     
\]

\noindent is cofinal and $\tau$-closed in $\exp_{\tau}(T\setminus T_{0})$.

For each $R \in {\mathcal M}_{(g,T_{0})}$ let $f_{T_{0}\cup R} = g_{T_{0}\cup R}|g_{T_{0}\cup R}(X)$.
\end{proof}

\begin{pro}\label{P:spectralwcontrolh}
If, in Proposition \ref{P:spectralwcontrol}, the map $g$ is a homeomorphism, then the set 

\[ {\mathcal H}_{(g, T_{0})} = \{ R \in {\mathcal M}_{(g,T_{0})} \colon g_{T_{0} \cup R}\; \text{is a homeomorphism\;}\}
\]

\noindent is cofinal and $\tau$-closed in $\exp_{\tau}(T\setminus T_{0})$.
\end{pro}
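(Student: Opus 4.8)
The plan is to leverage Proposition \ref{P:spectralwcontrol} as a black box and upgrade the conclusion from ``$g_{T_0\cup R}$ exists'' to ``$g_{T_0\cup R}$ is a homeomorphism'', using the fact that the inverse homeomorphism $g^{-1}$ also fixes the $T_0$-coordinates (since $\pi_{T_0}\circ g=\pi_{T_0}$ forces $\pi_{T_0}\circ g^{-1}=\pi_{T_0}$). First I would apply Proposition \ref{P:spectralwcontrol} simultaneously to $g$ and to $g^{-1}$, obtaining two cofinal $\tau$-closed families ${\mathcal M}_{(g,T_0)}$ and ${\mathcal M}_{(g^{-1},T_0)}$ in $\exp_\tau(T\setminus T_0)$. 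Their intersection is again cofinal and $\tau$-closed, because the intersection of two cofinal $\tau$-closed subsets of $\exp_\tau(T\setminus T_0)$ is cofinal and $\tau$-closed (a standard fact about the club-filter-type structure on $\exp_\tau$). So it suffices to show that every $R$ in this intersection lies in ${\mathcal H}_{(g,T_0)}$.

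Next I would fix such an $R$, write $A=T_0\cup R$, and denote by $g_A,\ (g^{-1})_A\colon \prod\{X_t\colon t\in A\}\to\prod\{X_t\colon t\in A\}$ the maps provided by the two applications, so that $\pi_A\circ g=g_A\circ\pi_A$ and $\pi_A\circ g^{-1}=(g^{-1})_A\circ\pi_A$. The key computation is that $g_A$ and $(g^{-1})_A$ are mutually inverse. Indeed, $g_A\circ(g^{-1})_A\circ\pi_A=g_A\circ\pi_A\circ g^{-1}=\pi_A\circ g\circ g^{-1}=\pi_A=\mathrm{id}\circ\pi_A$; since $\pi_A$ is a surjection onto $\prod\{X_t\colon t\in A\}$ (the projection of a full Tychonov cube onto a subproduct is surjective), we may cancel it on the right to get $g_A\circ(g^{-1})_A=\mathrm{id}$, and symmetrically $(g^{-1})_A\circ g_A=\mathrm{id}$. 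Hence $g_A$ is a bijection with continuous inverse, i.e.\ a homeomorphism, so $R\in{\mathcal H}_{(g,T_0)}$. This shows ${\mathcal M}_{(g,T_0)}\cap{\mathcal M}_{(g^{-1},T_0)}\subseteq{\mathcal H}_{(g,T_0)}$, and since the left side is cofinal and $\tau$-closed, so is ${\mathcal H}_{(g,T_0)}$ (cofinality passes to supersets, and $\tau$-closedness of ${\mathcal H}_{(g,T_0)}$ either follows because it equals that intersection, or is checked directly as in the previous proofs).

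The one point requiring a little care — and what I'd flag as the main obstacle, though it is not serious — is the surjectivity of $\pi_A$ needed for the cancellation step: this is immediate here because the domain is the genuine product $\prod\{X_t\colon t\in T\}$ and $\pi_A$ is the coordinate projection onto $\prod\{X_t\colon t\in A\}$, which is onto. A secondary bookkeeping point is that the family produced by Proposition \ref{P:spectralwcontrol} applied to $g^{-1}$ is indexed by the \emph{same} set $\exp_\tau(T\setminus T_0)$ (the hypothesis $\pi_{T_0}\circ g^{-1}=\pi_{T_0}$ holds for the same $T_0$), so the intersection genuinely makes sense inside one poset. With these observations in place the argument is a short diagram chase; I would write it in two or three sentences after invoking the two instances of Proposition \ref{P:spectralwcontrol}.
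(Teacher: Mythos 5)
Your argument is correct and is essentially the paper's own proof: apply Proposition \ref{P:spectralwcontrol} to both $g$ and $g^{-1}$, intersect the two cofinal $\tau$-closed families (the paper cites \cite[Proposition 1.1.27]{chibook} for the fact that the intersection is again cofinal and $\tau$-closed), and observe that for $R$ in the intersection the induced maps are mutually inverse, so $g_{T_0\cup R}$ is a homeomorphism. Your cancellation argument via surjectivity of $\pi_A$ just spells out the step the paper leaves as ``it only remains to note''.
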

\begin{proof}
By Proposition \ref{P:spectralwcontrol} applied both to $g$ and $g^{-1}$, the sets ${\mathcal M}_{(g,T_{0})}$ and ${\mathcal M}_{(g^{-1},T_{0})}$ are cofinal and $\tau$-closed in $\exp_{\tau}(T\setminus T_{0})$. By \cite[Proposition 1.1.27]{chibook}, ${\mathcal M}_{(g,T_{0})} \cap {\mathcal M}_{(g^{-1},T_{0})}$ is still cofinal and $\tau$-closed. It only remains to note that for each $R$ from this intersection the map $g_{T_{0}\cup R}$ is a homeomorphism.
\end{proof} 

\begin{cor}\label{C:spectralwcontrolh}
If, in Corollary \ref{C:spectralwcontrol}, the map $f$ is a homeomorphism, then the set 

\[ {\mathcal H}_{(f, T_{0})} = \{ R \in {\mathcal M}_{(f,T_{0})} \colon f_{T_{0} \cup R}\; \text{is a homeomorphism\;}\}
\]

\noindent is cofinal and $\tau$-closed in $\exp_{\tau}(T\setminus T_{0})$.
\end{cor}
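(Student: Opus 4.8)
The plan is to follow the route by which Corollary~\ref{C:spectralwcontrol} was obtained from Proposition~\ref{P:spectralwcontrol}, but to run the argument simultaneously for $f$ and for $f^{-1}$ and then intersect the two resulting cofinal $\tau$-closed families, exactly as Proposition~\ref{P:spectralwcontrolh} was deduced from Proposition~\ref{P:spectralwcontrol}. The one point requiring attention is that, unlike in Proposition~\ref{P:spectralwcontrolh}, one cannot extend $f$ to a self-\emph{homeomorphism} of $I^{T}$ --- a homeomorphism between two closed subsets of the Tychonov cube need not extend to an autohomeomorphism of the cube --- so the verification that the approximating maps are homeomorphisms must be carried out directly with the maps $f_{T_{0}\cup R}$ themselves rather than by quoting Proposition~\ref{P:spectralwcontrolh}.

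First I would observe that the hypothesis $\pi_{T_{0}}\circ f=\pi_{T_{0}}|X$ is symmetric in $f$: for $y\in Y$ and $x=f^{-1}(y)$ one has $\pi_{T_{0}}(f^{-1}(y))=\pi_{T_{0}}(x)=\pi_{T_{0}}(f(x))=\pi_{T_{0}}(y)$, hence $\pi_{T_{0}}\circ f^{-1}=\pi_{T_{0}}|Y$ as well. Thus Corollary~\ref{C:spectralwcontrol} applies both to $f\colon X\to Y$ and to $f^{-1}\colon Y\to X$, so ${\mathcal M}_{(f,T_{0})}$ and ${\mathcal M}_{(f^{-1},T_{0})}$ are each cofinal and $\tau$-closed in $\exp_{\tau}(T\setminus T_{0})$, and by \cite[Proposition~1.1.27]{chibook} so is the intersection ${\mathcal M}_{(f,T_{0})}\cap{\mathcal M}_{(f^{-1},T_{0})}$.

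Finally I would check that ${\mathcal H}_{(f,T_{0})}$ is exactly this intersection, which then yields the assertion. If $R\in{\mathcal M}_{(f,T_{0})}\cap{\mathcal M}_{(f^{-1},T_{0})}$, the two approximating maps $f_{T_{0}\cup R}\colon\pi_{T_{0}\cup R}(X)\to\pi_{T_{0}\cup R}(Y)$ and $(f^{-1})_{T_{0}\cup R}\colon\pi_{T_{0}\cup R}(Y)\to\pi_{T_{0}\cup R}(X)$ satisfy their intertwining identities over $\pi_{T_{0}\cup R}$; writing a point of $\pi_{T_{0}\cup R}(X)$ as $\pi_{T_{0}\cup R}(x)$ and using $f^{-1}\circ f=\operatorname{id}$ and $f\circ f^{-1}=\operatorname{id}$ gives at once that $(f^{-1})_{T_{0}\cup R}$ is a two-sided inverse of $f_{T_{0}\cup R}$, so $f_{T_{0}\cup R}$ is a continuous bijection of compacta with continuous inverse, i.e. $R\in{\mathcal H}_{(f,T_{0})}$. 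Conversely, if $R\in{\mathcal H}_{(f,T_{0})}$ then $R\in{\mathcal M}_{(f,T_{0})}$ by definition, and $(f_{T_{0}\cup R})^{-1}$ serves as an approximating map for $f^{-1}$ --- its intertwining identity being a formal consequence of that of $f_{T_{0}\cup R}$ --- whence $R\in{\mathcal M}_{(f^{-1},T_{0})}$. So the two sets coincide, and ${\mathcal H}_{(f,T_{0})}$ inherits cofinality and $\tau$-closedness from the intersection. I do not expect any genuine obstacle; the only subtlety, and the reason this is not a literal corollary of Proposition~\ref{P:spectralwcontrolh}, is the non-extendability of $f$ noted above.
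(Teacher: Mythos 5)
Your argument is correct, and it is essentially the proof the paper intends: the corollary is stated without proof precisely because it follows by running Corollary \ref{C:spectralwcontrol} for both $f$ and $f^{-1}$ and intersecting, mirroring how Proposition \ref{P:spectralwcontrolh} is deduced from Proposition \ref{P:spectralwcontrol}. Your observation that one cannot simply extend $f$ to an autohomeomorphism of $I^{T}$ and quote Proposition \ref{P:spectralwcontrolh}, together with the direct verification that $(f^{-1})_{T_{0}\cup R}$ inverts $f_{T_{0}\cup R}$ (which works because these maps are uniquely determined by the intertwining relation and the surjectivity of $\pi_{T_{0}\cup R}|X$), is exactly the right way to fill in the omitted details.
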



\subsection{Properties of $Z_{\tau}$-sets in the Tychonov cube $I^{\tau}$}\label{SS:zsets}

By $\operatorname{cov}(X)$ we denote the collection of all countable functionally open covers of a space $X$. We introduce the following notation

\[ B(f,\{ {\mathcal U}_{t} \colon t \in T\} ) = \{ g \in C(X,Y) \colon g\; \text{is}\; {\mathcal U}_{t}\text{-close to}\; f \;\text{for each}\; t \in T\} , \]
\medskip

Let $\tau$ be an infinite cardinal. If $X$ and $Y$ are  Tychonov spaces then $C_{\tau}(X,Y)$ denotes the space of all continuous maps $X \to Y$ with the topology defined as follows (\cite{chifund}, \cite[p.273]{chibook}):  a set $G \subseteq C_{\tau}(X,Y)$ is open if for each $h \in G$ there exists a collection $\{ {\mathcal U}_{t} \colon t \in T\} \subseteq \operatorname{cov}(Y)$, with $|T| < \tau$, such that

\[ h \in B(f,\{ {\mathcal U}_{t} \colon t \in T\} ) \subseteq G .\]

Obviously if $\tau = \omega$, then the above topology coincides with the limitation topology (see \cite{torun}). For $\tau > \omega$, this topology is conveniently described in the following statement \cite[Lemma 3.1]{chiext}.

\begin{lem}\label{L:description}
Let $\tau > \omega$ and $X$ be a $z$-embedded subspace of a product $\prod\{ X_{t} \colon t \in T\}$ of separable metrizable spaces. If $|T| = \tau$, then basic neighborhoods of a map $f \colon Y \to X$ in $C_{\tau}(Y,X)$ are of the form $B(f, S) = \{ g \in C_{\tau}(Y,X) \colon \pi_{S}\circ g = \pi_{S}\circ f\}$, $S \subset T$, $|S| < \tau$, where $\pi_{S} \colon \prod\{ X_{t} \colon t \in T\} \to \prod\{ X_{t} \colon t \in S\}$ denotes the projection onto the corresponding subproduct.
\end{lem}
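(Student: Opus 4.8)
The plan is to prove directly that the sets $B(f,S)$, $S\subseteq T$, $|S|<\tau$, form a neighborhood base at $f$ in $C_{\tau}(Y,X)$; along the way each of them will turn out to be open. Write $P=\prod\{X_{t}\colon t\in T\}$ and denote by $\pi_{t}\colon P\to X_{t}$ the $t$-th coordinate projection. Fix for each $t\in T$ a compatible metric $d_{t}$ on $X_{t}$ and a countable dense set $D_{t}\subseteq X_{t}$, and for $t\in T$, $n\in\omega$ put
\[ {\mathcal U}_{t,n}=\{\,X\cap\pi_{t}^{-1}(B_{d_{t}}(x,1/n))\colon x\in D_{t}\,\}. \]
Since open balls of $X_{t}$ are functionally open, ${\mathcal U}_{t,n}$ is a countable functionally open cover of $X$ (it covers $X$ because $D_{t}$ is dense), that is, ${\mathcal U}_{t,n}\in\operatorname{cov}(X)$.

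The first step is to check that, for every $S\subseteq T$ with $|S|<\tau$,
\[ B(f,S)=B\bigl(f,\{\,{\mathcal U}_{t,n}\colon t\in S,\ n\in\omega\,\}\bigr), \]
where the indexing family on the right has cardinality $|S|\cdot\omega<\tau$ as $\tau>\omega$. The inclusion $\subseteq$ is immediate: if $\pi_{S}\circ g=\pi_{S}\circ f$ then for each $t\in S$, $n\in\omega$, $y\in Y$ any member of ${\mathcal U}_{t,n}$ containing $f(y)$ also contains $g(y)$. For $\supseteq$, if $g$ is ${\mathcal U}_{t,n}$-close to $f$ for all $t\in S$ and all $n$, then for $y\in Y$ and $t\in S$ the triangle inequality gives $d_{t}(\pi_{t}f(y),\pi_{t}g(y))<2/n$ for every $n$, whence $\pi_{t}g(y)=\pi_{t}f(y)$; as $t$ ranges over $S$ this yields $\pi_{S}\circ g=\pi_{S}\circ f$. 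The displayed equality also shows each $B(f,S)$ is open in $C_{\tau}(Y,X)$: if $h\in B(f,S)$ then $\pi_{S}\circ h=\pi_{S}\circ f$, so $B(h,S)=B(f,S)$ and therefore $h\in B(h,\{{\mathcal U}_{t,n}\colon t\in S,\ n\in\omega\})=B(f,S)$, which is exactly the condition required by the definition of the $C_{\tau}$-topology.

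The second, and main, step is that every basic neighborhood $B(f,\{{\mathcal U}_{t}\colon t\in T_{0}\})$ with $|T_{0}|<\tau$ contains some $B(f,S)$, $|S|<\tau$. It suffices to attach to each ${\mathcal U}=\{U_{n}\colon n\in\omega\}\in\operatorname{cov}(X)$ a countable $S_{{\mathcal U}}\subseteq T$ with
\[ B(f,S_{{\mathcal U}})\subseteq\{\,g\in C_{\tau}(Y,X)\colon g\ \text{is}\ {\mathcal U}\text{-close to}\ f\,\}, \]
since then $S=\bigcup_{t\in T_{0}}S_{{\mathcal U}_{t}}$ satisfies $|S|\le|T_{0}|\cdot\omega<\tau$ and $B(f,S)\subseteq B(f,S_{{\mathcal U}_{t}})$ for each $t$, hence $B(f,S)\subseteq B(f,\{{\mathcal U}_{t}\colon t\in T_{0}\})$. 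To construct $S_{{\mathcal U}}$: each $X\setminus U_{n}$ is functionally closed in $X$, so by $z$-embeddedness $X\setminus U_{n}=X\cap\widetilde{Z}_{n}$ for some functionally closed $\widetilde{Z}_{n}\subseteq P$; since the $X_{t}$ are separable, $\widetilde{Z}_{n}$ depends on countably many coordinates, i.e.\ $\widetilde{Z}_{n}=\pi_{S_{n}}^{-1}(Z_{n}')$ for a countable $S_{n}\subseteq T$ and a functionally closed $Z_{n}'$ in $\prod\{X_{t}\colon t\in S_{n}\}$. Put $S_{{\mathcal U}}=\bigcup_{n}S_{n}$. Then $U_{n}=X\cap\pi_{S_{{\mathcal U}}}^{-1}(W_{n})$ for some functionally open $W_{n}$, so if $\pi_{S_{{\mathcal U}}}\circ g=\pi_{S_{{\mathcal U}}}\circ f$ then, for $y\in Y$, choosing $n$ with $f(y)\in U_{n}$ gives $\pi_{S_{{\mathcal U}}}(g(y))=\pi_{S_{{\mathcal U}}}(f(y))\in W_{n}$ and $g(y)\in X$, hence $g(y)\in U_{n}$; thus $f$ and $g$ are ${\mathcal U}$-close.

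Combining the two steps, the sets $B(f,S)$ with $|S|<\tau$ are open neighborhoods of $f$ and are cofinal, under reverse inclusion, among the basic neighborhoods of $f$; hence they form a neighborhood base at $f$, which is the assertion of the lemma. I expect the main obstacle to be the second step, specifically the chain ``functionally closed in $X$ $\Rightarrow$ functionally closed in $P$ $\Rightarrow$ supported on countably many coordinates'': the first implication is precisely $z$-embeddedness, and the second is the classical fact that a continuous real-valued function on a product of separable spaces depends on countably many coordinates. Everything else is bookkeeping with cardinals $<\tau$ and a direct unwinding of the definition of $C_{\tau}(Y,X)$.
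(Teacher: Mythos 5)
Your proof is correct. Note that the paper itself offers no argument for this lemma --- it is quoted verbatim from \cite[Lemma 3.1]{chiext} --- so there is no in-text proof to compare against; judged on its own, your two-step argument (first, that each $B(f,S)$ coincides with a basic set $B(f,\{\mathcal{U}_{t,n}\colon t\in S,\ n\in\omega\})$ built from ball covers, and hence is open; second, that every basic neighborhood $B(f,\{\mathcal{U}_t\colon t\in T_0\})$ absorbs some $B(f,S)$ with $|S|\le |T_0|\cdot\omega<\tau$) is exactly the right unwinding of the definition of the $C_\tau$-topology. You also correctly isolate the two hypotheses that do the real work: $z$-embeddedness of $X$ in the product lets you replace each functionally closed set $X\setminus U_n$ by a functionally closed set of the ambient product, and the Ross--Stone/Mazur factorization theorem (every continuous real-valued function on a product of separable spaces depends on countably many coordinates; see also the factorization results in \cite[\S 1.1]{chibook}) then supplies the countable supports $S_n$. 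The only cosmetic remarks: the case $S=\emptyset$ gives $B(f,\emptyset)=C_\tau(Y,X)$ and should be read as such, and in the reverse inclusion of your first step the conclusion $d_t(\pi_t f(y),\pi_t g(y))<2/n$ for all $n$ uses that both points lie in a common ball of radius $1/n$, which is precisely what $\mathcal{U}_{t,n}$-closeness gives; both points are fine as written.
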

Now we are ready to define $Z_{\tau}$-sets \cite[Definition 8.5.1]{chibook}.

\begin{defin}\label{D:zset}
Let $\tau \geq \omega$. A closed subset $A \subset X$ is a $Z_{\tau}$-set in $X$ if the set $\{ f \in C_{\tau}(X,X) \colon f(X) \cap A =\emptyset\}$ is dense in the space $C_{\tau}(X,X)$. 
\end{defin}
Clearly $Z_{\omega}$-sets are same as standard $Z$-sets. We also need the following concept.

\begin{defin}\label{D:fiberedzset}
Let $\pi \colon X \to Y$ be a map. A closed subset $A \subset X$ is a fibered $Z$-set in $X$ if the set $\{ f \in C^{\pi}_{\tau}(X,X) \colon f(X) \cap A=\emptyset\}$ is dense in the space $C^{\pi}_{\tau}(X,X)= \{ f \in C_{\tau}(X,X) \colon \pi \circ f = \pi\}$.
\end{defin}

\begin{lem}\label{L:zset1}
Let $\tau > \omega$ and $|T| = \tau$. For a closed set $M \subset I^{T}$ the following conditions are equivalent:
\begin{enumerate}
\item
$M$ is a $Z_{\tau}$-set in $I^{T}$,
\item
For each $T_{0} \subset T$, with $|T_{0}| <\tau$, the set

\begin{multline*}
 {\mathcal Z}_{(M,T_{0})}=\biggl\{ S \subset \exp_{\omega}(T\setminus T_{0}) \colon \pi_{T_{0} \cup S}(M) \;\text{is a fibered}\; Z\text{-set in}\; I^{T_{0}\cup S}\; \text{with}\\  
   \text{respect to}\; \pi_{T_{0}}^{T_{0}\cup S}\biggr\}  
\end{multline*}

\noindent is cofinal and $\omega$-closed in $\exp_{\omega}(T\setminus T_{0})$.
\end{enumerate}
\end{lem}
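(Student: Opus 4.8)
The plan is to prove the equivalence $(1)\Leftrightarrow(2)$ by reducing the $Z_\tau$-set condition, which is phrased in terms of density of a certain set in the non-metrizable function space $C_\tau(I^T,I^T)$, to a spectral/coordinate statement about countable subproducts, using the description of the topology on $C_\tau$ from Lemma \ref{L:description} together with the spectral results of Subsection \ref{SS:st}.

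\textbf{Proof of $(1)\Rightarrow(2)$.} Fix $T_0\subset T$ with $|T_0|<\tau$. First observe that ${\mathcal Z}_{(M,T_0)}$ is automatically $\omega$-closed (a countable increasing union of coordinate sets over which $\pi_{T_0\cup S}(M)$ is a fibered $Z$-set again has this property, since fibered $Z$-sets are detected by a countable family of covers and the limit map's coordinates stabilize on a $\sigma$-complete lattice), so the real content is \emph{cofinality}. Given an arbitrary $S_0\in\exp_\omega(T\setminus T_0)$, I must find a countable $S\supseteq S_0$ in ${\mathcal Z}_{(M,T_0)}$. Using Definition \ref{D:zset} and Lemma \ref{L:description}, for each $n$ pick a map $f_n\in C_\tau(I^T,I^T)$ with $f_n(I^T)\cap M=\emptyset$ and $\pi_{A_n}\circ f_n=\pi_{A_n}$ for a prescribed countable $A_n$; by a standard closing-off argument (invoking Corollary \ref{C:spectralwcontrol} applied to each $f_n$ with the control set $T_0$, and Proposition \ref{P:spectralwcontrol} to absorb all the countably many approximants simultaneously into one $\sigma$-complete cofinal lattice) one obtains a single countable $S\supseteq S_0$ such that each $f_n$ factors through $\pi_{T_0\cup S}$ as some $f_{n,T_0\cup S}$ with $\pi_{T_0}^{T_0\cup S}\circ f_{n,T_0\cup S}=\pi_{T_0}^{T_0\cup S}$ and $f_{n,T_0\cup S}(I^{T_0\cup S})\cap\pi_{T_0\cup S}(M)=\emptyset$. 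Enlarging $S$ so that the covers witnessing $Z$-ness are themselves pulled back from $I^{T_0\cup S}$, the maps $f_{n,T_0\cup S}$ become dense among $\pi_{T_0}^{T_0\cup S}$-fiber-preserving self-maps of $I^{T_0\cup S}$ missing $\pi_{T_0\cup S}(M)$, i.e. $\pi_{T_0\cup S}(M)$ is a fibered $Z$-set in $I^{T_0\cup S}$ with respect to $\pi_{T_0}^{T_0\cup S}$. Hence $S\in{\mathcal Z}_{(M,T_0)}$.

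\textbf{Proof of $(2)\Rightarrow(1)$.} To show $M$ is a $Z_\tau$-set, fix $g\in C_\tau(I^T,I^T)$ and a basic neighborhood, which by Lemma \ref{L:description} has the form $B(g,S_1)=\{h\colon\pi_{S_1}\circ h=\pi_{S_1}\circ g\}$ for some countable $S_1\subset T$. Set $T_0=S_1$. Apply Corollary \ref{C:spectralwcontrol} to the map $\pi_{T_0}^{T_0\cup\,\cdot}\circ g$ (after composing $g$ with a retraction so that it is $\pi_{T_0}$-preserving, which does not cost anything because we only need to perturb $g$ within $B(g,S_1)$) to get a countable $R\supseteq\emptyset$ with $g$ factoring as $g_{T_0\cup R}$ through $\pi_{T_0\cup R}$; then enlarge $R$ so that $R\in{\mathcal Z}_{(M,T_0)}$, using the cofinality hypothesis (2). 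Now $\pi_{T_0\cup R}(M)$ is a fibered $Z$-set in $I^{T_0\cup R}$ over $\pi_{T_0}^{T_0\cup R}$, so there is a $\pi_{T_0}^{T_0\cup R}$-fiber-preserving self-map $\varphi$ of $I^{T_0\cup R}$, arbitrarily close to $g_{T_0\cup R}$, with image missing $\pi_{T_0\cup R}(M)$. Lift: define $h=(\varphi\circ\pi_{T_0\cup R})\triangle\pi_{T\setminus(T_0\cup R)}\circ g$ — more precisely compose $\varphi\circ\pi_{T_0\cup R}$ with $g$ in the remaining coordinates — so that $h$ agrees with $g$ on the $T_0$-coordinates (hence $h\in B(g,S_1)$) and $\pi_{T_0\cup R}(h(I^T))=\varphi(I^{T_0\cup R})$ misses $\pi_{T_0\cup R}(M)$, whence $h(I^T)\cap M=\emptyset$. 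This proves density of the relevant set in $C_\tau(I^T,I^T)$, i.e. $M$ is a $Z_\tau$-set.

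\textbf{Main obstacle.} The delicate point is the bookkeeping in $(1)\Rightarrow(2)$: one must show that a \emph{single} countable $S$ can be chosen so that $\pi_{T_0\cup S}(M)$ is genuinely a fibered $Z$-set in $I^{T_0\cup S}$, i.e. that the full density statement (over all countably many covers and all fiber-preserving self-maps) descends to the subproduct. This requires combining the spectral absorption of Proposition \ref{P:spectralwcontrol} with the observation that $Z$-ness in a metrizable compactum is witnessed by countably many approximations, and then diagonalizing these two countable processes against each other inside the $\sigma$-complete cofinal lattice ${\mathcal M}_{(g,T_0)}$; care is needed to ensure the witnessing covers on $I^{T_0\cup S}$ are pulled back from (not just refined by) coordinates already in $S$, which is what forces one to pass to a cofinal—rather than merely nonempty—family and is the reason the statement is phrased with "cofinal and $\omega$-closed".
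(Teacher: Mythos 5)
Your proof follows essentially the same route as the paper's: reduce $Z_{\tau}$-density to coordinate control via Lemma \ref{L:description}, factor the witnessing maps through countable subproducts using Proposition \ref{P:spectralwcontrol}, close off iteratively to a single countable $S$, and, for the converse, lift a fibered-$Z$ witness on $I^{T_{0}\cup S}$ back to $I^{T}$ through a section of the projection. One phrasing should be repaired: the countably many maps $f_{n,T_{0}\cup S}$ are not themselves \emph{dense} among the fiber-preserving self-maps of $I^{T_{0}\cup S}$ --- rather, the adaptive choice of the control sets forces them to converge to the identity, and density of the fiber-preserving maps missing $\pi_{T_{0}\cup S}(M)$ then follows by post-composition (likewise, in the converse direction the perturbation $\varphi$ should be taken close to the identity and post-composed with $g_{T_{0}\cup R}$ rather than taken close to $g_{T_{0}\cup R}$, which need not be fiber-preserving) --- but this is the standard argument that the paper itself leaves to the reader.
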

\begin{proof}
(1) $\Longrightarrow$ (2). Let $S_{0} =R \in \exp_{\omega}(T\setminus T_{0})$. Since $M$ is a $Z_{\tau}$-set in $I^{T}$ and $|T_{0} \cup S_{0}|<\tau$, there exists, by Lemma \ref{L:description}, a map $f_{1} \colon I^{T} \to I^{T}$ such that $\pi_{T_{0}\cup S_{0}}\circ f_{1} = \pi_{T_{0}\cup S_{0}}$ and  $f_{1}(I^{T}) \cap M = \emptyset$. By Proposition \ref{P:spectralwcontrol}, there exist a countable subset $S_{1} \subseteq T\setminus T_{0}$, with $S_{0} \subseteq S_{1}$, and a map $g_{1} \colon I^{T_{0} \cup S_{1}} \to I^{T_{0} \cup S_{1}}$ such that $g_{1}(I^{T_{0} \cup S_{1}}) \cap \pi_{T_{0}\cup S_{1}}(M) = \emptyset$ and $\pi_{T_{0} \cup S_{1}}\circ f_{1} = g_{1}\circ \pi_{T_{0} \cup S_{1}}$ . Note that $\pi_{T_{0}}^{T_{0}\cup S_{1}}\circ g_{1} = \pi_{T_{0}}^{T_{0}\cup S_{1}}$. 

Continuing this process we construct an increasing sequence $\{ S_{n} \colon n \in \omega\}$ of countable subsets of $T\setminus T_{0}$ and maps $g_{n} \colon I^{T_{0}\cup S_{n}} \to I^{T_{0}\cup S_{n}}$ so that $\pi_{T_{0}\cup S_{n}}^{T_{0}\cup S_{n+1}}\circ g_{n+1} = \pi_{T_{0}\cup S_{n}}^{T_{0}\cup S_{n+1}}$ and $g_{n}(I^{T_{0}\cup S_{n}}) \cap \pi_{T_{0}\cup S_{n}}(M) = \emptyset$ for each $n \geq 1$. Let $S = \cup \{ S_{n}\colon n \in \omega\}$. We leave to the reader verification of the fact that the set $\pi_{T_{0}\cup S}(M)$ is a fibered $Z$-set in the cube $I^{T_{0}\cup S}$ with respect to the projection $\pi_{T_{0}}^{T_{0}\cup S}$ (consult with the proof of \cite[Proposition 2.3]{chiext}) . This proves the cofinality of the set  ${\mathcal Z}_{(M,T_{0})}$. The $\omega$-completeness of this set is obvious.
 
(2) $\Longrightarrow$ (1). According to Lemma \ref{L:description} it suffices to find, for any $T_{0} \subset T$ with $|T_{0}|<\tau$, a map $f \colon I^{T} \to I^{T}$ such that $\pi_{T_{0}}\circ f = \pi_{T_{0}}$ and $f(I^{T}) \cap M = \emptyset$. By (2), there exist a countable subset $S \subset T \setminus T_{0}$ and a map $g \colon I^{T_{0}\cup S} \to I^{T_{0}\cup S}$ such that $\pi_{T_{0}}^{T_{0}\cup S}\circ g = \pi_{T_{0}}^{T_{0}\cup S}$ and $g(I^{T_{0}\cup S}) \cap \pi_{T_{0}\cup S}(M)$. Let $j \colon I^{T_{0}\cup S} \to I^{T}$ be a section of the projection $\pi_{T_{0}\cup S} \colon I^{T} \to I^{T_{0}\cup S}$. It only remains to note that the map $f = j\circ g \circ \pi_{T_{0}\cup S}$ has required properties.
\end{proof}

\begin{pro}\label{P:zset2}
Let $\tau > \omega$ and $|T| = \tau$. For a closed set $M \subset I^{T}$ the following conditions are equivalent:
\begin{enumerate}
  \item
$M$ is a $Z_{\tau}$-set in $I^{T}$;
  \item 
If $F\subset M$ is a closed subset, then $\psi(F,I^{T}) =\tau$;
  \item 
$T$ can be represented as the increasing union $T = \cup\{ T_{\alpha} \colon \alpha < \tau \}$ of its subsets so that
\begin{itemize}
  \item[(a)]
$|T_{0}| = \omega$; 
  \item[(b)] 
$|T_{\alpha +1} \setminus T_{\alpha}| = \omega$;
  \item[(c)]
$T_{\alpha} = \cup\{ T_{\beta} \colon \beta < \alpha \}$ for each limit ordinal $\alpha < \tau$; 
  \item[(d)]   
$\pi_{T_{\alpha +1}}(Z)$ is a fibered $Z$-set in $I^{T_{\alpha +1}}$ with respect to the projection $\pi_{T_{\alpha}}^{T_{\alpha +1}} \colon I^{T_{\alpha +1}} \to I^{T_{\alpha}}$. 
\end{itemize}
\end{enumerate}
\end{pro}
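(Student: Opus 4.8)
The plan is to prove Proposition~\ref{P:zset2} by establishing the chain of implications $(1)\Longrightarrow(2)\Longrightarrow(3)\Longrightarrow(1)$, exploiting Lemma~\ref{L:zset1} together with the spectral machinery of Subsection~\ref{SS:st}. The key conceptual observation is that being a $Z_{\tau}$-set is, via Lemma~\ref{L:zset1}, equivalent to having ``fibered $Z$-set approximations'' on a cofinal $\omega$-closed family of countable coordinate subsets, and the two remaining conditions are just a pointwise (pseudocharacter) reformulation and a concrete transfinite-tower repackaging of that same phenomenon.

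For $(1)\Longrightarrow(2)$: let $F\subset M$ be closed and suppose, toward a contradiction, that $\psi(F,I^{T})<\tau$; then $F$ is the intersection of fewer than $\tau$ functionally open subsets of $I^{T}$, so there is a set $T_{0}\subset T$ with $|T_{0}|<\tau$ such that $F=\pi_{T_{0}}^{-1}(\pi_{T_{0}}(F))$, i.e.\ $F$ is ``$T_{0}$-cylindrical.'' Apply Lemma~\ref{L:zset1}(2) to obtain a countable $S\subset T\setminus T_{0}$ with $\pi_{T_{0}\cup S}(M)$ a fibered $Z$-set in $I^{T_{0}\cup S}$ over $\pi_{T_{0}}^{T_{0}\cup S}$; push $F$ down to $I^{T_{0}\cup S}$, where it is still cylindrical over $T_{0}$ and contained in the fibered $Z$-set $\pi_{T_{0}\cup S}(M)$. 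But a fibered $Z$-set in a cube over a projection with metrizable (here even finite-dimensional in each new coordinate block — countable) fibers cannot contain a full fiber, and a cylindrical set over $T_{0}$ meeting the ``vertical'' directions $S$ nontrivially would, after choosing points, produce an entire $I^{S}$-slice inside the fibered $Z$-set — a contradiction. Hence $F$ is a single point in each $\pi_{T_{0}}^{-1}(y)$-fiber only if $\psi(F,I^T)=\tau$; more precisely one argues that the pseudocharacter cannot drop below $\tau$ without contradicting the fibered $Z$-set property at some stage. This direction should be routine once the cylindricity reformulation of pseudocharacter is in hand.

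For $(3)\Longrightarrow(1)$: given the tower $T=\bigcup_{\alpha<\tau}T_{\alpha}$ with properties (a)--(d), fix any $T_{0}\subset T$ with $|T_{0}|<\tau$; by (a)--(c) there is an $\alpha<\tau$ with $T_{0}\subseteq T_{\alpha}$, and one shows that the set $\{S\in\exp_{\omega}(T\setminus T_{0}):\pi_{T_{0}\cup S}(M)$ is a fibered $Z$-set over $\pi_{T_{0}}^{T_{0}\cup S}\}$ contains, for each $\beta\geq\alpha$, the trace of $T_{\beta+1}\setminus T_{0}$ — using (d) and the stability of fibered $Z$-sets under pullback along coordinate projections and under enlarging the base coordinate set (the fibered-$Z$-set condition over $\pi_{T_{\alpha}}^{T_{\alpha+1}}$ survives intersecting the base with $T_{0}$, since $T_{0}\subseteq T_{\alpha}$). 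Cofinality and $\omega$-closedness of $\mathcal{Z}_{(M,T_0)}$ then follow, and Lemma~\ref{L:zset1} yields that $M$ is a $Z_{\tau}$-set. The construction $(2)\Longrightarrow(3)$ is the heart of the matter: starting from condition (2), one recursively builds the $T_{\alpha}$'s, at successor steps invoking the pseudocharacter hypothesis on suitable closed (cylindrical) subsets of $M$ together with Corollary~\ref{C:spectralwcontrol} / Proposition~\ref{P:spectralwcontrol} to push the relevant projection maps that witness the fibered $Z$-set property down to a countably larger coordinate block, and at limit steps taking unions — here $\tau$-closedness of the relevant spectral families (and the closedness of $M$) guarantees that condition (d) passes to limits. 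One should also invoke Lemma~\ref{L:zset1} to re-interpret (2) as supplying, on a cofinal $\omega$-closed set of coordinate subsets, exactly the fibered $Z$-set data needed to feed the recursion.

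The main obstacle I anticipate is the limit-stage verification in $(2)\Longrightarrow(3)$, namely checking that if $\pi_{T_{\beta+1}}(M)$ is a fibered $Z$-set over $\pi_{T_{\beta}}^{T_{\beta+1}}$ for all $\beta<\alpha$, then $\pi_{T_{\alpha}}(M)$ inherits the appropriate fibered $Z$-set property — this requires a spectral/continuity argument showing that the maps witnessing ``$f(I^{T_\beta})\cap\pi_{T_\beta}(M)=\emptyset$, $\pi$-fiberwise'' can be coherently assembled into a limit map, which is precisely where the $\tau$-closedness in Proposition~\ref{P:spectralwcontrol} (applied with the control condition $\pi_{T_{0}}\circ g=\pi_{T_{0}}$) does the work. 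A secondary subtlety is making the equivalence ``$\psi(F,I^{T})=\tau$ for all closed $F\subset M$'' genuinely interact with the countable-block structure: one must be careful that it is the pseudocharacter of \emph{every} closed subset, not just of $M$ itself, that is needed — a single $T_0$-cylindrical closed subset of $M$ with small pseudocharacter is what would obstruct the fibered $Z$-set property at the $T_0$ stage, so the ``for all $F$'' quantifier is exactly calibrated to make the recursion go through.
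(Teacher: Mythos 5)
Your implications $(1)\Rightarrow(2)$ and the idea behind $(3)\Rightarrow(1)$ are sound, but there is a genuine gap at the step you yourself call the heart of the matter, namely $(2)\Rightarrow(3)$. Your recursion at successor stages speaks of ``pushing down the relevant projection maps that witness the fibered $Z$-set property,'' but such maps are exactly what has to be \emph{produced} from the pseudocharacter hypothesis: condition (2) only says that $M$ contains no nonempty closed set depending on fewer than $\tau$ coordinates (equivalently, no full fiber $\pi_{A}^{-1}(y)$ with $|A|<\tau$), and converting this into the existence of maps $h\colon I^{T_{\alpha+1}}\to I^{T_{\alpha+1}}$ with $\pi_{T_{\alpha}}^{T_{\alpha+1}}\circ h=\pi_{T_{\alpha}}^{T_{\alpha+1}}$ and $\operatorname{Im}(h)\cap\pi_{T_{\alpha+1}}(M)=\emptyset$ is a nontrivial general-position/selection argument that your sketch never supplies. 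The spectral results (Proposition \ref{P:spectralwcontrol}, Corollary \ref{C:spectralwcontrol}) only factor maps that already exist on the whole cube; and your appeal to Lemma \ref{L:zset1} ``to re-interpret (2) as supplying the fibered $Z$-set data'' is circular, since the hypothesis of that lemma is condition (1), not (2). The paper avoids this entirely by citing the equivalence $(1)\Leftrightarrow(2)$ to \cite[Proposition 8.5.5]{chibook}, after which $(2)\Rightarrow(3)$ reduces to $(1)\Rightarrow(3)$, i.e.\ to Lemma \ref{L:zset1}. Either quote that reference or actually carry out the construction of the fiber-preserving maps from (2); as written the cycle does not close.

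A secondary problem is your route for $(3)\Rightarrow(1)$. You propose to verify cofinality of $\mathcal{Z}_{(M,T_{0})}$ by showing it contains the traces of the sets $T_{\beta+1}\setminus T_{0}$, but Lemma \ref{L:zset1}(2) quantifies over \emph{countable} $S\in\exp_{\omega}(T\setminus T_{0})$, while $T_{\beta+1}\setminus T_{0}$ is in general uncountable, so these traces are not even members of the index set. The paper's argument is both shorter and correct: given $A\subset T$ with $|A|<\tau$, choose $\alpha$ with $A\subset T_{\alpha}$, take the fiber-preserving map $f$ on $I^{T_{\alpha+1}}$ furnished by (d), and set $g=i_{T_{\alpha+1}}\circ f\circ\pi_{T_{\alpha+1}}$ for a section $i_{T_{\alpha+1}}$; then $\pi_{A}\circ g=\pi_{A}$ and $\operatorname{Im}(g)\cap M=\emptyset$, which verifies the $Z_{\tau}$-property directly via Lemma \ref{L:description} without any cofinal-family bookkeeping. (Minor points: in $(1)\Rightarrow(2)$ you should note $F\neq\emptyset$, and the parenthetical ``finite-dimensional fibers'' is inaccurate --- $I^{S}$ for countable infinite $S$ is a Hilbert cube --- though this does not affect the argument.)
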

\begin{proof}
Equivalence (1) and (2) is proved in \cite[Proposition 8.5.5]{chibook}. Implication (2) $\Longrightarrow$ (3) follows from Lemma \ref{L:zset1}. Finally, in order to prove implication (3) $\Longrightarrow$ (1), let $A\subset T$ with $|A| < \tau$. Then, by (b), there exists $\alpha < \tau$ such that $A \subset T_{\alpha}$. By (d), there exists a map $f \colon I^{T_{\alpha +1}} \to I^{T_{\alpha +1}}$ such that $\pi_{T_{\alpha}}^{T_{\alpha +1}}\circ f = \pi_{T_{\alpha}}^{T_{\alpha +1}}$ and $\operatorname{Im}(f) \cap \pi_{T_{\alpha +1}}(Z) = \emptyset$. Let $g \colon I^{T} \to I^{T}$ is defined as $g = i_{T_{\alpha +1}}\circ f \circ \pi_{T_{\alpha +1}}$, where $i_{T_{\alpha +1}}$ is a section of the projection $\pi_{T_{\alpha +1}}$. It is clear that $\pi_{A}\circ g = \pi_{A}$ and $\operatorname{Im}(g) \cap Z = \emptyset$.
\end{proof}

\begin{cor}\label{C:embedding}
Let $\tau \geq \omega$ and $X$ be a compact space of weight $\leq \tau$. Then there exists a $Z_{\tau}$-embedding of $X$ into the cube $I^{\tau}$.
\end{cor}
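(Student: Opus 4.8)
The plan is to locate inside $I^{\tau}$ a canonical closed topological copy of $I^{\tau}$ that is a $Z_{\tau}$-set, and then to embed $X$ into that copy. For $\tau = \omega$ this is classical: a compact metrizable $X$ embeds in $I^{\omega}$, the slice $I^{\omega}\times\{0\}$ is a $Z$-set in $I^{\omega}\times I^{\omega}\cong I^{\omega}$, and closed subsets of $Z$-sets are $Z$-sets; since $Z_{\omega}$-sets are exactly $Z$-sets, this settles the case. So from now on assume $\tau > \omega$ and write $I^{\tau} = I^{T}$ with $|T| = \tau$.

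First I would split $T = T_{1}\sqcup T_{2}$ with $|T_{1}| = |T_{2}| = \tau$ (possible since $\tau + \tau = \tau$), fix a point $c \in I^{T_{2}}$, and put $M = I^{T_{1}}\times\{c\}$, a closed subset of $I^{T}$. Since the weight of $X$ does not exceed $\tau = |T_{1}|$, there is an embedding $e\colon X\hookrightarrow I^{T_{1}}$, and $x \mapsto (e(x),c)$ embeds $X$ onto a closed subspace of $M$. Thus it suffices to prove (i) $M$ is a $Z_{\tau}$-set in $I^{T}$, and (ii) every closed subset of a $Z_{\tau}$-set of $I^{T}$ is again a $Z_{\tau}$-set. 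Statement (ii) is immediate from the equivalence (1)$\Leftrightarrow$(2) of Proposition \ref{P:zset2}: if $M'\subseteq M$ is closed and $M$ is a $Z_{\tau}$-set, then every closed $F\subseteq M'$ is closed in $M$, hence $\psi(F,I^{T}) = \tau$, so $M'$ is a $Z_{\tau}$-set.

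For (i) I would argue straight from Definition \ref{D:zset}, using Lemma \ref{L:description} to describe the topology of $C_{\tau}(I^{T},I^{T})$ (it applies since $I^{T}$ is trivially $z$-embedded in the product $\prod\{I_{t}\colon t\in T\}$ of separable metrizable factors). Fix $f\in C_{\tau}(I^{T},I^{T})$ and a basic neighborhood $B(f,S)$ with $S\subset T$, $|S|<\tau$. Since $|S| < \tau = |T_{2}|$ we can pick $t_{0}\in T_{2}\setminus S$ and a point $a \in I$ with $a \neq c_{t_{0}}$; let $g\colon I^{T}\to I^{T}$ be the map with $\pi_{t}\circ g = \pi_{t}\circ f$ for all $t \neq t_{0}$ and with $\pi_{t_{0}}\circ g \equiv a$. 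Then $g$ is continuous, $\pi_{S}\circ g = \pi_{S}\circ f$ so $g\in B(f,S)$, and every point of $g(I^{T})$ has $t_{0}$-th coordinate $a \neq c_{t_{0}}$ and hence lies off $M$. Therefore $\{g\in C_{\tau}(I^{T},I^{T})\colon g(I^{T})\cap M = \emptyset\}$ is dense in $C_{\tau}(I^{T},I^{T})$, i.e. $M$ is a $Z_{\tau}$-set, and (i) together with (ii) finishes the proof.

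I expect no genuinely hard step here; the two points deserving care are, first, that the basic neighborhoods of $C_{\tau}(I^{T},I^{T})$ are really cut out by the projections $\pi_{S}$ with $|S|<\tau$ — this is exactly Lemma \ref{L:description} and is where the hypothesis $\tau > \omega$ is used — and, second, that passage to closed subsets preserves the $Z_{\tau}$-property, which I would deduce from the pseudocharacter criterion in Proposition \ref{P:zset2}(2) rather than trying to manipulate maps directly.
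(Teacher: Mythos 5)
Your proof is correct, but it takes a different route from the paper's. The paper writes $I^{\tau}=\prod\{Y_{t}\colon t\in T\}$ with each $Y_{t}$ a copy of $I^{\omega}$, embeds $X$ into a product $\prod\{X_{t}\colon t\in T\}$ of metrizable compacta, $Z$-embeds each factor $X_{t}$ into $Y_{t}$, and then invokes Proposition \ref{P:zset2} (the fibered-$Z$-set criterion (3)) to conclude that the whole product $\prod X_{t}$ --- and hence its closed subset $X$ --- is a $Z_{\tau}$-set. You instead use a single slice $M=I^{T_{1}}\times\{c\}$ coming from a splitting $T=T_{1}\sqcup T_{2}$ with $|T_{1}|=|T_{2}|=\tau$, and verify the $Z_{\tau}$-property of $M$ directly from Definition \ref{D:zset} via Lemma \ref{L:description}: given a basic neighborhood $B(f,S)$ with $|S|<\tau$, perturbing one coordinate $t_{0}\in T_{2}\setminus S$ to a constant $a\neq c_{t_{0}}$ produces a map in $B(f,S)$ missing $M$. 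Both arguments rely on closed-subset heredity of the $Z_{\tau}$-property; the paper leaves this implicit ("and hence $X$"), whereas you justify it cleanly through the pseudocharacter criterion (2) of Proposition \ref{P:zset2}, correctly quarantining the case $\tau=\omega$ where that criterion is unavailable. Your version is more elementary and self-contained (no fibered $Z$-sets needed), while the paper's product decomposition is the one that generalizes to showing that products of $Z$-embeddings are $Z_{\tau}$-embeddings, which is the real content of its appeal to Proposition \ref{P:zset2}(3). Either way the corollary stands.
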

\begin{proof}
For $\tau = \omega$ the statement is known. Let $|T| = \tau > \omega$ and represent the cube $I^{\tau}$ as a product $\prod\{ Y_{t} \colon t \in T\}$, where each $Y_{t}$ is a copy of $I^{\omega}$. Embed $X$ into a product $\prod\{ X_{t} \colon t \in T\}$ of compact metrizable spaces $X_{t}$. Since each $X_{t}$ admits a $Z$-embedding into $Y_{t}$, it follows from Proposition \ref{P:zset2} that the product $\prod\{ X_{t} \colon t \in T\}$, and hence $X$, admits a $Z_{\tau}$-embedding into $\prod\{ Y_{t} \colon t \in T\}$.
\end{proof}


\subsection{Mapping replacement}\label{SS:mapping}

For a $Z$-set $Z\subset I^{\omega}$ and a closed subset $Y$ of a metrizable compactum $X$, the standard mapping replacement allows us to approximate a map $f \colon X \to I^{\omega}$ by a map $g \colon X \to I^{\omega}$ in such a way that $g|Y = f|Y$ and $g(X\setminus Y) \cap Z = \emptyset$. The key here is that the space $C(X, I^{\omega})$ is completely metrizable and hence possesses Baire property. Below we prove needed versions of mapping replacement for $Z_{\tau}$-sets Tychonov cubes by using spectral technique.

\begin{pro}\label{L:main}
Let $\tau \geq \omega$ and $Z$ be a $Z_{\tau}$-set in $I^{\tau}$. Suppose also that $Y \subset X$ is a closed subset of a compactum $X$. Then for any map $f \colon X \to I^{\tau}$ and any collection $\{ K_{\alpha} \colon 1\leq \alpha < \tau\}$ of compact subsets of $X$ with $\bigcup_{1\leq \alpha < \tau} K_{\alpha} \cap Y = \emptyset$ there exists a map $g \colon X \to I^{\tau}$ such that $g|Y = f|Y$ and $f(\bigcup_{1\leq \alpha < \tau} K_{\alpha}) \subset I^{\tau}\setminus Z$.
\end{pro}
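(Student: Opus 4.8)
The plan is to reduce the non-metrizable statement to the classical metrizable mapping replacement via the spectral machinery of Section~\ref{SS:st}, handling the $\tau$ compacta $K_\alpha$ by a transfinite induction that, at each stage, uses one more countable ``coordinate block'' of $T$. First I would fix an index set $T$ with $|T|=\tau$, identify $I^\tau$ with $I^T$, and invoke Proposition~\ref{P:zset2} to pick an increasing representation $T=\bigcup\{T_\alpha:\alpha<\tau\}$ satisfying (a)--(d) of that Proposition, so that $\pi_{T_{\alpha+1}}(Z)$ is a fibered $Z$-set in $I^{T_{\alpha+1}}$ over $I^{T_\alpha}$. The case $\tau=\omega$ is just the classical statement quoted at the top of Subsection~\ref{SS:mapping}, so assume $\tau>\omega$.

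The core of the argument is a single ``one-block'' replacement lemma: given a closed $Y'\subset X$, a compact $K\subset X$ with $K\cap Y'=\emptyset$, a map $h\colon X\to I^T$, and an ordinal $\alpha<\tau$ such that $h$ already agrees with a fixed target behaviour on a bounded set of coordinates $T_\alpha$, one produces $h'\colon X\to I^T$ with $\pi_{T_\alpha}\circ h'=\pi_{T_\alpha}\circ h$, $h'|Y'=h|Y'$, and $h'(K)\cap Z=\emptyset$. To get this, apply Corollary~\ref{C:spectralwcontrol} (with $T_0=T_\alpha$) to the map $\pi_{T_{\alpha+1}}\circ h$ restricted to the compactum $X$ — enlarging $T_{\alpha+1}$ within $T\setminus T_\alpha$ to a countable set $S$ lying in the cofinal $\tau$-closed set $\mathcal M$, so that $\pi_S\circ h$ factors as $h_S\circ\pi_S$ through a metrizable compact factor $\pi_S(X)\subset I^S$. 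On that metrizable factor $I^S$ the set $\pi_S(Z)$ is (fibered, hence in particular) a $Z$-set over $I^{T_\alpha}$, and $\pi_S(Y')$, $\pi_S(K)$ are compact; since $\pi_S(K)\cap\overline{\pi_S(Y')}$ need not be empty, I would instead work relative to a functionally open set of $X$ separating $K$ from $Y'$ (available because $X$ is a compactum, hence normal), push $\pi_S(Y')$'s image to a small neighbourhood, and apply the classical \emph{fibered} mapping replacement in $C(\pi_S(X),I^S)$ (complete metric, Baire) to move the image of $\pi_S(K\setminus(\text{nbhd of }Y'))$ off $\pi_S(Z)$ without disturbing the $T_\alpha$-coordinates or the values on $\pi_S(Y')$. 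Composing back with $\pi_S$ and a section of $\pi_S$ yields $h'$.

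With the one-block lemma in hand, I would run a transfinite recursion of length $\tau$: enumerate $\{K_\alpha:1\le\alpha<\tau\}$, set $g_0=f$ and $T$-level $T_0$, and at step $\alpha+1$ apply the one-block lemma to $g_\alpha$, $Y'=Y\cup\overline{\bigcup_{\beta\le\alpha}K_\beta}$... — but here I must be careful, since that union need not be closed nor disjoint from later $K_\gamma$. The clean fix is to keep $Y'=Y$ throughout and instead \emph{accumulate constraints on coordinates}: at stage $\alpha$ I only require $\pi_{T_\alpha}\circ g_\alpha=\pi_{T_\alpha}\circ g_{\alpha'}$ for $\alpha'<\alpha$ and $g_\alpha(K_\beta)\cap Z=\emptyset$ for all $\beta\le\alpha$, the latter being preserved at later stages precisely because later modifications fix all $T_\alpha$-coordinates and $Z$ is a $Z_\tau$-set so membership in $I^T\setminus Z$ is witnessed on a bounded coordinate set (this is where Proposition~\ref{P:zset2}(2), $\psi(\cdot,I^T)=\tau$ applied to singletons in $\pi_{T_\alpha}(Z)$, or rather the fibered-$Z$-set statement, does the work). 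At limit stages $\alpha$, because each $g_\beta$ differs from $g_{\beta+1}$ only on coordinates outside $T_\beta$ and $T_\alpha=\bigcup_{\beta<\alpha}T_\beta$, the maps $g_\beta$ converge in $C_\tau(X,I^T)$ — concretely, the $T_\alpha$-coordinates have stabilized along the way — so $g_\alpha:=\lim g_\beta$ is well defined and continuous by Lemma~\ref{L:description}. Finally $g:=g_\tau$ satisfies $g|Y=f|Y$ and $g(\bigcup_{1\le\alpha<\tau}K_\alpha)\cap Z=\emptyset$.

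The main obstacle I anticipate is the limit-stage bookkeeping: ensuring that the pointwise-over-coordinates limit of the $g_\beta$ is an honest continuous map into $I^T$ and that the disjointness $g_\alpha(K_\beta)\cap Z=\emptyset$ genuinely persists across limits. Both hinge on the same structural fact — that $Z$, being a $Z_\tau$-set, ``lives on boundedly many coordinates'' in the fibered sense of Proposition~\ref{P:zset2}(3)(d), so that once $g_\beta(K_\beta)$ has been pushed off $\pi_{T_{\beta+1}}(Z)$ in the factor $I^{T_{\beta+1}}$ and all subsequent maps preserve $\pi_{T_{\beta+1}}$, the image $g_\gamma(K_\beta)$ stays off $Z$ for every $\gamma\ge\beta+1$. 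Getting the quantifiers on the cofinal $\tau$-closed sets $\mathcal M_{(\cdot,T_0)}$ to line up so that a single increasing tower $\{T_\alpha\}$ works simultaneously for the spectral factorizations of all the relevant maps is the routine-but-delicate part; it is exactly the kind of intersection-of-club-sets argument enabled by \cite[Proposition 1.1.27]{chibook} already used in Proposition~\ref{P:spectralwcontrolh}.
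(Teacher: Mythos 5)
Your proposal is correct and follows essentially the same route as the paper: the tower $\{T_\alpha\}$ supplied by Proposition~\ref{P:zset2}, a transfinite recursion treating one $K_\alpha$ per successor step via the fibered $Z$-set property of $\pi_{T_{\alpha+1}}(Z)$ over $I^{T_\alpha}$, persistence of $g(K_\alpha)\cap Z=\emptyset$ because all later stages fix the $T_{\alpha+1}$-coordinates, and limits at limit ordinals. The only substantive difference is in the successor step, where your Baire-category argument in a countable factor (and the attendant worry about separating $\pi_S(K_\alpha)$ from $\pi_S(Y)$ in the image) is replaced in the paper by something simpler: since $Y$ and $K_\alpha$ are disjoint closed sets in the compactum $X$, one defines the new map piecewise on $Y\cup K_\alpha$ (as $\pi_{T_{\alpha+1}}\circ f$ on $Y$ and as $h\circ s\circ g_\alpha$ on $K_\alpha$, where $h$ is a fibered self-map of $I^{T_{\alpha+1}}$ missing $\pi_{T_{\alpha+1}}(Z)$ and $s$ is a section of the projection) and then extends it over all of $X$ compatibly with $g_\alpha$ using softness of $\pi^{T_{\alpha+1}}_{T_\alpha}$, so no further appeal to completeness of a mapping space is needed.
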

\begin{proof}
Let $|T| = \tau$ and $\{T_{\alpha} \colon \alpha < \tau\}$ be a collection of subsets supplied by Proposition \ref{P:zset2} corresponding to the $Z_{\tau}$-set $Z$.

We are going to construct maps $g_{\alpha} \colon X \to I^{T_{\alpha}}$ as follows. Let $g_{0} = \pi_{T_{0}}\circ f \colon X \to I^{T_{0}}$. Suppose that for each $\beta < \alpha$ we have already constructed $g_{\beta}$ satisfying the following conditions:

\begin{itemize}
  \item[(i)] 
$\pi_{T_{\delta}}^{T_{\beta}} \circ g_{\beta} = g_{\delta}$, whenever $\delta < \beta  < \alpha$; 
  \item[(ii)] 
$g_{\beta} = \lim\{ g_{\delta} \colon \delta < \beta \}$, whenever $\beta< \alpha$ is a limit ordinal;
  \item[(iii)] 
$g_{\beta}(K_{\beta}) \cap \pi_{T_{\beta}}(Z) = \emptyset$, whenever $1 \leq \beta < \alpha$; 
  \item[(iv)]
 $g_{\beta}|Y = \pi_{T_{\beta}}\circ f|Y$, whenever $\beta < \alpha$.
\end{itemize}

First consider the case $\alpha = \beta +1$. Since $\pi_{T_{\alpha}}(Z)$ is a fibered $Z$-set in $I^{T_{\alpha}}$ with respect to the projection $\pi_{T_{\beta}}^{T_{\alpha}} \colon I^{T_{\alpha}} \to I^{T_{\beta}}$ there exists a map $h_{\alpha} \colon I^{T_{\alpha}} \to I^{T_{\alpha}}$ such that $h_{\alpha}(K_{\alpha}) \cap \pi_{T_{\alpha}}(Z) = \emptyset$ and $\pi_{T_{\beta}}^{T_{\alpha}}\circ h_{\alpha} = \pi_{T_{\beta}}^{T_{\alpha}}$. Let $s \colon I^{T_{\beta}} \to I^{T_{\alpha}}$ be a section of the projection $\pi_{T_{\beta}}^{T_{\alpha}} \colon I^{T_{\alpha}} \to I^{T_{\beta}}$. Next consider the map $r_{\alpha} \colon Y \cup K_{\alpha} \to I^{T_{\alpha}}$ which coincides with $\pi_{T_{\alpha}}\circ f$ on $Y$ and with $h_{\alpha} \circ s \circ g_{\alpha}$ on $K_{\alpha}$. Straightforward verification shows that the following diagram of unbroken arrows commutes:

\[
        \xymatrix{
            Y \cup K_{\alpha} \ar^(0.55){r_{\alpha}}[rr] \ar_{\text{incl}}@{_{(}->}[dd] & & I^{T_{\alpha}} \ar^{\pi_{T_{\beta}}^{T_{\alpha}}}[dd]\\
    & & \\
            X \ar^{g_{\beta}}[rr] \ar@{.>}^{g_{\alpha}}[uurr]  & & I^{T_{\beta}}   \\
        }
      \]

\medskip

\noindent Consequently, by softness of the projection $\pi^{T_{\alpha}}_{T_{\beta}} \colon I^{T_{\alpha}} \to I^{T_{\beta}}$, there is a map $g_{\alpha} \colon X \to I^{A}$ (the dotted arrow in the diagram) such that $g_{\alpha}|(Y \cup K_{\alpha}) = r_{\alpha}$ and $\pi_{T_{\beta}}^{T_{\alpha}}\circ g_{\alpha} = g_{\beta}$. It is also clear that $g_{\alpha}(K_{\alpha}) \cap \pi_{T_{\alpha}}(Z) = \emptyset$ and $g_{\alpha}|Y = \pi_{T_{\alpha}}\circ f$.

If $\alpha = \lim\{ \beta \colon \beta < \alpha\}$, then let $g_{\alpha} = \lim\{ g_{\beta} \colon \beta < \alpha\} \colon X \to I^{T_{\alpha}}$. 

This completes inductive construction. Finally, let $g = \lim\{ g_{\alpha} \colon \alpha < \tau\} \colon X \to I^{T}$ be the limit map. It is clear that $g|Y = f|Y$ and $g(K_{\alpha}) \cap Z = \emptyset$ for each $\alpha$.
\end{proof}

\begin{cor}\label{C:1}
Let $\tau > \omega$. For any map $f \colon X \to Y$ between $Z_{\tau}$-sets of the Tychonov cube $I^{\tau}$ there exists a proper map $g \colon I^{\tau}\setminus X \to I^{\tau}\setminus Y$ such that $f = \bar{g}|X$, where $\bar{g} \colon I^{\tau} \to I^{\tau}$ is the extension of $g$.
\end{cor}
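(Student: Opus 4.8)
The plan is to produce $g$ as the restriction to $I^{\tau}\setminus X$ of a map $\bar g\colon I^{\tau}\to I^{\tau}$ built from the mapping replacement Proposition \ref{L:main}, and then to verify that $\bar g(I^{\tau}\setminus X)\subseteq I^{\tau}\setminus Y$ and that $g=\bar g|(I^{\tau}\setminus X)$ is proper. First I would extend $f\colon X\to Y$ to a map $\varphi\colon I^{\tau}\to I^{\tau}$ using that $I^{\tau}$ is an absolute extensor (for $0$-dimensional spaces, hence in particular for $I^{\tau}$); more carefully, since $X$ and $Y$ are closed in $I^{\tau}$, compose an extension of $f$ over $I^{\tau}$ with values in $I^{\tau}$. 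Now the target $Y$ is a $Z_{\tau}$-set in $I^{\tau}$, and I want to push the image of the complement $I^{\tau}\setminus X$ off of $Y$ while not disturbing the values on $X$ itself. To apply Proposition \ref{L:main} with the roles $Z=Y$ (a $Z_{\tau}$-set), ambient compactum $X_{\mathrm{Prop}}=I^{\tau}$, and the closed subset playing the role of ``$Y$'' in that proposition taken to be $X$, I need a family $\{K_{\alpha}\colon 1\le\alpha<\tau\}$ of compact subsets of $I^{\tau}$ with $\bigl(\bigcup_{\alpha}K_{\alpha}\bigr)\cap X=\emptyset$ whose union is all of $I^{\tau}\setminus X$.

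The existence of such a family is where the weight-$\tau$ hypothesis and the $Z_{\tau}$-set structure of $X$ enter, and this is the step I expect to be the main obstacle. Since $X$ is a $Z_{\tau}$-set in $I^{\tau}$, Proposition \ref{P:zset2} gives $T=\bigcup_{\alpha<\tau}T_{\alpha}$ with each $\pi_{T_{\alpha+1}}(X)$ a fibered $Z$-set, and in particular (equivalence (1)$\Leftrightarrow$(2) of that proposition) $\psi(F,I^{\tau})=\tau$ for every closed $F\subseteq X$; I would use this to write the open set $I^{\tau}\setminus X$ as a union of $\tau$ functionally closed (hence compact) subsets of $I^{\tau}$, each disjoint from $X$. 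Concretely, $I^{\tau}\setminus X=\bigcup\{\pi_{T_{\alpha+1}}^{-1}(I^{T_{\alpha+1}}\setminus U_{\alpha})\colon\ldots\}$ type decompositions, or more simply: $I^{\tau}\setminus X$ is the union of the cozero sets witnessing $\psi(X,I^{\tau})\le\tau$, and each such cozero set is a countable union of functionally closed sets disjoint from $X$, giving at most $\tau$ compacta in total; reindex them as $\{K_{\alpha}\colon 1\le\alpha<\tau\}$. Applying Proposition \ref{L:main} to $\varphi$, $X$ (as the retained closed set), and this family yields $\bar g\colon I^{\tau}\to I^{\tau}$ with $\bar g|X=\varphi|X=f$ and $\bar g(K_{\alpha})\cap Y=\emptyset$ for all $\alpha$, hence $\bar g(I^{\tau}\setminus X)=\bar g\bigl(\bigcup_{\alpha}K_{\alpha}\bigr)\subseteq I^{\tau}\setminus Y$.

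It remains to check that $g=\bar g|(I^{\tau}\setminus X)\colon I^{\tau}\setminus X\to I^{\tau}\setminus Y$ is proper, i.e. perfect, since both spaces are locally compact (they are open subsets of $I^{\tau}$; note by Lemma \ref{L:SC} that $I^{\tau}$ is the Stone-\v Cech compactification of each). A map between locally compact spaces whose Stone-\v Cech extension $\bar g\colon I^{\tau}\to I^{\tau}$ carries the remainder $X=\beta(I^{\tau}\setminus X)\setminus(I^{\tau}\setminus X)$ into the remainder $Y=\beta(I^{\tau}\setminus Y)\setminus(I^{\tau}\setminus Y)$ is automatically proper: if $C\subseteq I^{\tau}\setminus Y$ is compact, then $\operatorname{cl}_{I^{\tau}}C$ is disjoint from $Y\supseteq\bar g(X)$, so $\bar g^{-1}(\operatorname{cl}_{I^{\tau}}C)$ is a closed subset of $I^{\tau}$ disjoint from $X$, hence a compact subset of $I^{\tau}\setminus X$ containing $g^{-1}(C)$, which is therefore compact. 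Finally $\bar g$ is by construction the (unique, by density and compactness) extension of $g$ over $I^{\tau}=\beta(I^{\tau}\setminus X)$, and $f=\bar g|X$. This completes the argument; the only genuinely delicate point is the construction of the covering family $\{K_{\alpha}\}$, which I would phrase directly in terms of the cozero decomposition of $I^{\tau}\setminus X$ afforded by $\psi(X,I^{\tau})=\tau$.
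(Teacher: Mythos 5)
Your argument is correct and follows essentially the same route as the paper: extend $f$ over $I^{\tau}$, apply Proposition \ref{L:main} with $Z=Y$ to push the image of $I^{\tau}\setminus X$ off $Y$ while keeping the values on $X$, and deduce properness from the fact that $\bar g$ carries the remainder $X$ into the remainder $Y$. The covering family $\{K_{\alpha}\}$ you single out as the delicate point is actually routine (the paper passes over it silently): $I^{\tau}\setminus X$ is a union of at most $\tau$ basic cozero sets, each of which is a countable union of compacta disjoint from $X$, so no $Z_{\tau}$-property of $X$ is needed for that step.
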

\begin{proof}
Let $\bar{f} \colon I^{\tau} \to I^{\tau}$ be an extension of $f$. By Proposition \ref{L:main}, there exists a map $\bar{g} \colon I^{\tau} \to I^{\tau}$ such that $\bar{g}|X = \bar{f}|X = f$ and $\bar{g}(I^{\tau}\setminus X ) \subset I^{\tau}\setminus Y$. Clearly, $g = \bar{g}|(I^{\tau}\setminus X) \colon I^{\tau}\setminus X  \to I^{\tau}\setminus Y$ is a proper map with required properties.
\end{proof}

\begin{lem}\label{L:countable}
Let $B \subset A$ and $|A\setminus B| =\omega$. Suppose that $Z$ is a fibered $Z$-set in $I^{A}$ with respect to the projection $\pi_{B}^{A} \colon I^{A} \to I^{B}$.  Suppose also that $X$ is closed in  $I^{A}$ and we are given an embedding $f \colon X \to Z$ and a map $g \colon I^{B} \to I^{B}$ such that $\pi_{B}^{A}\circ f = g\circ \pi_{B}^{A}|X$. Then there exists a fibered $Z$-embedding $h \colon I^{A} \to I^{A}$ such that $\pi_{B}^{A}\circ h = g\circ \pi_{B}^{A}$, $h|X = f|X$ and $h(I^{A}\setminus X) \subset I^{A}\setminus Z$.
\end{lem}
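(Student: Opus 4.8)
The plan is, first, to extend $f$ to a map that is fibre-preserving over $g$, and then to approximate that extension, rel $X$, by a fibre-preserving embedding meeting $Z$ only along $X$. For the first part, identify $I^{A}$ with $I^{B}\times I^{A\setminus B}$, with $\pi_{B}^{A}$ the first projection and $I^{A\setminus B}\cong I^{\omega}$ since $|A\setminus B|=\omega$. Writing $f=(g\circ\pi_{B}^{A}|X)\triangle f^{\ast}$, where $f^{\ast}=\pi_{A\setminus B}^{A}\circ f\colon X\to I^{A\setminus B}$, and using that $X$ is closed in the compactum $I^{A}$, extend $f^{\ast}$ coordinatewise by Tietze's theorem to a map $\bar{f}^{\ast}\colon I^{A}\to I^{A\setminus B}$ and set $\bar{f}=(g\circ\pi_{B}^{A})\triangle\bar{f}^{\ast}$. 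Then $\pi_{B}^{A}\circ\bar{f}=g\circ\pi_{B}^{A}$ and $\bar{f}|X=f$; this is a lifting/extension device of the kind already used in the proof of Proposition \ref{L:main}.

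Next I would modify $\bar{f}$ --- keeping it fibre-preserving over $g$ and unchanged on $X$ --- into an embedding $h$ with $h(I^{A})$ a fibered $Z$-set and $h(I^{A}\setminus X)\subset I^{A}\setminus Z$. This is the counterpart, parametrized over $I^{B}$, of two classical facts about the Hilbert cube $I^{A\setminus B}$, applied in each fibre: a map of the Hilbert cube can be approximated, rel a closed subset on which it already is an embedding onto a $Z$-set, by $Z$-embeddings; and the image of such a map can be pushed off a prescribed $Z$-set away from that closed subset. The hypothesis that $Z$ is a fibered $Z$-set with respect to $\pi_{B}^{A}$ supplies precisely the uniformity over $I^{B}$ that makes these fibrewise constructions go through continuously, and one should note that $f(X)$, being a closed subset of the fibered $Z$-set $Z$, is itself a fibered $Z$-set, which is why $\bar{f}$ needs no correction over $X$. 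If $B$ is uncountable, the passage from the countable case is organized by the Spectral Theorem in the form of Subsection \ref{SS:st}: one represents $\bar{f}$, $g$ and the witnessing fibre-preserving self-maps of $I^{A}$ over a cofinal $\tau$-closed family of countable subsets of $B$ (Corollaries \ref{C:spectralwcontrol} and \ref{C:spectralwcontrolh}) and assembles $h$ as a limit. Taking the last approximation fine enough --- measured in the $I^{A\setminus B}$-coordinates via Lemma \ref{L:description} --- keeps the image of each member of a family of compacta covering the open set $I^{A}\setminus X$ (cozero sets in a compactum are $\sigma$-compact) off the closed set $Z$, so that $h(I^{A}\setminus X)\subset I^{A}\setminus Z$, while $h|X=f$ and $\pi_{B}^{A}\circ h=g\circ\pi_{B}^{A}$ hold by construction.

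I expect this second step to be the main obstacle, specifically the need for $h$ to meet three requirements at once: to coincide with the prescribed embedding $f$ on the closed set $X$ --- so it must touch $Z$ there, and nowhere else; to remain fibre-preserving over the possibly non-injective map $g$; and still to be a global embedding whose image is a fibered $Z$-set. The relative, fibre-preserving embedding and general-position theorem that this demands is where the assumption $|A\setminus B|=\omega$ is used essentially --- it is what makes the fibres Hilbert cubes, and so provides room for the embedding moves --- and where the fibered $Z$-set hypothesis on $Z$, together with the spectral reductions of Subsection \ref{SS:st}, does the real work.
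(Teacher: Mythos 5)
Your outline---first extend $f$ to a fibre-preserving map over $g$, then correct it rel $X$ into a fibered $Z$-embedding missing $Z$ off $X$---has the same shape as the paper's argument, but the step you yourself call ``the main obstacle'' is exactly the step you never supply, and it is where all the content lies. Arguing with ``two classical facts about the Hilbert cube \dots applied in each fibre'' does not work: applying unparametrized $Z$-embedding approximation and general position fibre by fibre gives no continuity in the base variable, and the fibered $Z$-set hypothesis on $Z$ by itself does not produce fibre-preserving \emph{embeddings} of the whole cube. What the paper actually does is (1) use the spectral theorem for $\omega$-spectra (\cite[Theorem 1.3.4]{chibook}, cf.\ Subsection \ref{SS:st}) to find a \emph{single} countable $C\subset B$ over which $Z$, $f$ and $g$ are simultaneously represented, with $\pi_{C\cup(A\setminus B)}^{A}(Z)$ still a fibered $Z$-set in the metrizable cube $I^{C\cup(A\setminus B)}$; (2) work in the completely metrizable space of fibre-preserving (over $g_{0}$) self-maps of $I^{C\cup(A\setminus B)}$ that agree with $f_{0}$ on $\pi_{C\cup(A\setminus B)}^{A}(X)$, where by the Toru\'{n}czyk--West bundle theory \cite{tw} the fibered $Z$-embeddings form a dense $G_{\delta}$, and the maps sending the complement of $\pi_{C\cup(A\setminus B)}^{A}(X)$ off $\pi_{C\cup(A\setminus B)}^{A}(Z)$ form another dense $G_{\delta}$; a Baire category intersection gives one map $h_{0}$ with both properties; (3) lift $h_{0}$ to $h\colon I^{A}\to I^{A}$ coordinatewise, leaving the $B\setminus C$-coordinates untouched. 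Without citing or proving a parametrized, relative embedding-approximation theorem of this kind, your second step is an assertion, not a proof; this is a genuine gap.

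Your handling of uncountable $B$ is also not workable as written. You propose to represent the data over a cofinal $\tau$-closed family of countable subsets of $B$ and ``assemble $h$ as a limit,'' with $h(I^{A}\setminus X)\subset I^{A}\setminus Z$ secured by taking ``the last approximation fine enough'' on a family of compacta covering $I^{A}\setminus X$. But for uncountable $A$ the set $I^{A}\setminus X$ need not be cozero, $\sigma$-compact, or even Lindel\"{o}f ($X$ is merely closed, so the parenthetical about cozero sets does not apply), and in the $C_{\tau}$-topology for $\tau>\omega$ ``closeness'' means agreement on a prescribed set of $<\tau$ coordinates (Lemma \ref{L:description}), not metric fineness, so the approximation argument has no meaning at that level. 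In fact no transfinite assembly is needed inside this lemma at all: since $|A\setminus B|=\omega$, one spectral reduction to a single countable $C\subset B$ puts the entire construction in the metrizable setting, and the transfinite limit is the business of Proposition \ref{P:adjusted}, which uses the present lemma only as its successor step. Finally, the tension you flag between injectivity of $h$ and non-injectivity of $g$ is not resolved by your scheme; in the paper it is handled at the countable stage by the density of fibered $Z$-embeddings rel the prescribed data, together with the coordinatewise lift that changes only the $C\cup(A\setminus B)$-coordinates.
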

\begin{proof}
It is possible, using \cite[Theorem 1.3.4]{chibook} for $\omega$-spectra, to find a countable subset $C \subset B$, an embedding $f_{0} \colon \pi_{C\cup (A\setminus B)}^{A}(X) \to \pi_{C \cup (A\setminus B)}^{A}(Z)$ and a map $g_{0} \colon I^{C} \to I^{C}$, satisfying the following conditions:

\begin{itemize}
  \item[(i)]
$\pi_{C \cup (A\setminus B)}^{A}(Z)$ is a fibered $Z$-set in $I^{C \cup (A\setminus B)}$ with respect to the projection $\pi^{C \cup (A\setminus B)}_{C} \colon I^{C\cup (A\setminus B)} \to I^{C}$;
  \item[(ii)] 
$f_{0}\circ \pi_{C\cup (A\setminus B)}^{A}|X = \pi_{C\cup (A\setminus B)}^{A}\circ f$;
  \item[(iii)] 
$g_{0}\circ \pi_{C\cup (A\setminus B)}^{A}= \pi_{C\cup (A\setminus B)}^{A}\circ g$;
  \item[(iv)] 
$\pi_{C}^{C \cup (A\setminus B)}\circ f_{0} = g_{0}\circ \pi_{C}^{C \cup (A\setminus B)}|\pi_{C \cup (A\setminus B)}^{A}(X)$. 
\end{itemize}

Let $\bar{h}\colon I^{C\cup (A\setminus B)} \colon I^{C\cup (A\setminus B)}$ be a map such that $\pi_{C}^{C\cup (A\setminus B)}\circ \bar{h} = g_{0}\circ \pi_{C}^{C\cup (A\setminus B)}$ and $\bar{h}|\pi_{C\cup (A\setminus B)}^{A}(X) = f_{0}$. Next consider the space (in the compact-open topology) $C^{\bar{h}}(I^{C\cup (A\setminus B)}, I^{C\cup (A\setminus B)})$ of all maps $h \colon I^{C\cup (A\setminus B)} \to I^{C\cup (A\setminus B)}$ such that $h\circ \pi_{C}^{C\cup (A\setminus B)} = \bar{h}\circ \pi_{C}^{C\cup (A\setminus B)}$ and $h|\pi^{A}_{C\cup (A\setminus B)}(X) = f_{0}$. It follows from \cite{tw} that the set $S$ of fibered $Z$-embeddings is dense and $G_{\delta}$-subset in $C^{\bar{h}}(I^{C\cup (A\setminus B)}, I^{C\cup (A\setminus B)})$. Moreover, the set $R$ of maps with $h(I^{C \cup A\setminus B)} \setminus \pi_{C\cup (A\setminus B)}^{A}(X)) \cap \pi_{C\cup (A\setminus B)}^{A}(Z) = \emptyset$ is also dense and $G_{\delta}$ in $C^{\bar{h}}(I^{C\cup (A\setminus B)}, I^{C\cup (A\setminus B)})$. Consequently, since $C^{\bar{h}}(I^{C\cup (A\setminus B)}, I^{C\cup (A\setminus B)})$ is completely metrizable, $S \cap R \not= \emptyset$. Take any $h_{0} \in S \cap R$. There is precisely one map $h \colon I^{A}\to I^{A}$ such that $\pi_{C\cup (A\setminus B)}^{A}\circ h = h_{0}\circ \pi_{C\cup (A\setminus B)}^{A}$ and $\pi_{B}^{A}\circ h = \pi_{B}^{A}$. It follows from the construction that $h$ satisfies all required properties. 
\end{proof}

\begin{pro}\label{P:adjusted}
Let $|T| = \tau \geq \omega$ and $Z$ be a $Z_{\tau}$-set in the Tychonov cube $I^{T}$. Suppose that $Y$ is closed in a compactum $X$ of weight $\leq \tau$ and $f \colon X \to I^{T}$ is a map such that $f(Y) \subset Z$ and $f|Y \colon Y \to Z$ is an embedding. Then there exists a $Z_{\tau}$-embedding $h \colon X \to I^{T}$ such that $h|Y = f|Y$ and $h(X\setminus Y) \subset I^{T}\setminus Z$. 
\end{pro}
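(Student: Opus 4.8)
The plan is to run the same transfinite-spectral induction as in Proposition~\ref{L:main}, but this time upgrading each coordinatewise replacement map to a \emph{fibered $Z$-embedding} by invoking Lemma~\ref{L:countable}. Fix $|T|=\tau$ and, using Proposition~\ref{P:zset2}, write $T=\bigcup\{T_\alpha\colon\alpha<\tau\}$ as an increasing union of subsets with $|T_0|=\omega$, $|T_{\alpha+1}\setminus T_\alpha|=\omega$, continuity at limits, and with $\pi_{T_{\alpha+1}}(Z)$ a fibered $Z$-set in $I^{T_{\alpha+1}}$ over the projection $\pi_{T_\alpha}^{T_{\alpha+1}}$. Since $w(X)\le\tau$, we may assume $X$ is a closed subset of $I^{T}$, with $Y=X\cap(\text{something})$; more precisely, after composing with an embedding we regard $f$ itself as the inclusion-like data, and we build $h$ as an inverse limit of maps $h_\alpha\colon X\to I^{T_\alpha}$.

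First I would set $h_0=\pi_{T_0}\circ f\colon X\to I^{T_0}$ (no embedding condition imposed at the bottom level, exactly as in Proposition~\ref{L:main}, since a $Z_\tau$-embedding need not be ``$\omega$-level'' injective). Suppose $h_\beta$ has been constructed for all $\beta<\alpha$ so that the $h_\beta$ form a thread (compatible under the bonding projections, continuous at limits), each $h_\beta$ restricts on $Y$ to $\pi_{T_\beta}\circ f|Y$, each $h_\beta|Y$ is an embedding into $\pi_{T_\beta}(Z)$ for $\beta\ge 1$ (wait --- we only need the \emph{final} $h$ to be an embedding; the inductive invariant I will actually carry is that $h_\beta|Y$ is an embedding and that $h_\beta(X\setminus Y)\cap\pi_{T_\beta}(Z)=\emptyset$ for $\beta\ge\beta_0$ once we have "caught up", plus a fibered-$Z$-embedding property of $h_\beta$ over $\pi_{T_0}$). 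At a successor $\alpha=\beta+1$: apply Lemma~\ref{L:countable} with $A=T_{\alpha}$, $B=T_\beta$, the fibered $Z$-set $\pi_{T_\alpha}(Z)$, the embedding $f|Y$ pushed to level $T_\alpha$, and the map $g=\mathrm{id}$ on $I^{T_\beta}$ (using that $h_\beta|Y$ already lands in $\pi_{T_\beta}(Z)$ compatibly); this yields a fibered $Z$-embedding $h'\colon I^{T_\alpha}\to I^{T_\alpha}$ over $\pi_{T_\beta}^{T_\alpha}$ agreeing with the given embedding on $\pi_{T_\alpha}(X)$-image data and pushing the complement off $\pi_{T_\alpha}(Z)$. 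Then, exactly as in Proposition~\ref{L:main}, use softness of $\pi_{T_\beta}^{T_\alpha}$ to lift the pair $(h'\circ s\circ h_\beta$ on a suitable compact/closed piece, $\pi_{T_\alpha}\circ f$ on $Y)$ to a map $h_\alpha\colon X\to I^{T_\alpha}$ with $\pi_{T_\beta}^{T_\alpha}\circ h_\alpha=h_\beta$, $h_\alpha|Y=\pi_{T_\alpha}\circ f|Y$, $h_\alpha|Y$ an embedding, and $h_\alpha(X\setminus Y)\cap\pi_{T_\alpha}(Z)=\emptyset$. At limits take $h_\alpha=\lim\{h_\beta\colon\beta<\alpha\}$. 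Finally set $h=\lim\{h_\alpha\colon\alpha<\tau\}\colon X\to I^{T}$.

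It then remains to check three things about $h$: (1) $h|Y=f|Y$ and $h(X\setminus Y)\subset I^{T}\setminus Z$ --- both are immediate from the invariants, since $x\notin Y$ is separated from $Z$ at level $T_\alpha$ for all large $\alpha$, hence from $Z=\lim\pi_{T_\alpha}(Z)$ in $I^{T}$; (2) $h$ is injective --- this follows because $h|Y$ is injective and for $x\notin Y$ the point $h(x)\notin Z\supset h(Y)$, while two distinct points outside $Y$ are separated since at successor stages the $h'$ are embeddings and $X$ is compact so $h$ is a closed injection, hence an embedding; (3) $h(X)$ is a $Z_\tau$-set in $I^{T}$ --- here I would use criterion (3)$\Rightarrow$(1), equivalently Lemma~\ref{L:zset1}, applied to $h(X)$: at each successor level $\pi_{T_\alpha}(h(X))$ sits inside the image of the fibered $Z$-embedding $h'$, hence is a fibered $Z$-set in $I^{T_\alpha}$ over $\pi_{T_\beta}^{T_\alpha}$, which is exactly the cofinality-of-${\mathcal Z}_{(h(X),T_0)}$ condition needed.

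The main obstacle I anticipate is bookkeeping the two competing requirements at each successor stage --- that $h_\alpha$ simultaneously \emph{extend} the fixed embedding of $Y$ into $Z$ and \emph{be} (the pullback of) a fibered $Z$-embedding that throws $X\setminus Y$ off $Z$ --- and making sure Lemma~\ref{L:countable}'s hypothesis $\pi_B^A\circ f=g\circ\pi_B^A|X$ is genuinely met by our data, i.e. that the already-constructed $h_\beta$ restricted to $Y$ is compatible with the relevant projection of the target embedding. Getting the roles of the section $s\colon I^{T_\beta}\to I^{T_\alpha}$ and the lifted map $h'$ to interlock correctly with softness is the delicate point; everything else is a routine transcription of the argument in Proposition~\ref{L:main} with ``$Z$-set'' replaced by ``$Z$-embedding'' and Lemma~\ref{L:countable} supplying the metrizable Baire-category input at each countable step.
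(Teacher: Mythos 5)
Your proposal follows essentially the same route as the paper's proof: decompose $T$ into an increasing continuous chain of subsets $T_\alpha$ with countable increments so that $\pi_{T_{\alpha+1}}(Z)$ is a fibered $Z$-set over $\pi_{T_\alpha}^{T_{\alpha+1}}$, apply Lemma~\ref{L:countable} at each successor stage, pass to limits at limit ordinals, and invoke Proposition~\ref{P:zset2} (equivalently Lemma~\ref{L:zset1}) to certify that the limit map is a $Z_\tau$-embedding. The one point you flag as ``the main obstacle'' but do not actually close is the only real issue: Lemma~\ref{L:countable} requires that the map fed into it at level $\alpha$ restrict to an \emph{embedding} of (the projection of) $Y$ into $\pi_{T_\alpha}(Z)$, and $\pi_{T_\alpha}\circ f|Y$ need not be injective for an arbitrary $T_\alpha$; the paper secures this by simultaneously running the spectral theorem for the homeomorphism $f|Y$ (Corollary~\ref{C:spectralwcontrolh}) when selecting the chain $\{T_\alpha\}$, so that each $T_\alpha$ lies in the cofinal $\tau$-closed set on which the induced level map $f_\alpha|\pi_{T_\alpha}(Y)\colon \pi_{T_\alpha}(Y)\to\pi_{T_\alpha}(Z)$ is an embedding. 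Adding that one appeal (intersecting the cofinal $\tau$-closed families from Corollaries~\ref{C:spectralwcontrol}, \ref{C:spectralwcontrolh} and Lemma~\ref{L:zset1} via \cite[Proposition 1.1.27]{chibook}) turns your outline into the paper's argument.
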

\begin{proof}
Without loss of generality we may assume that $X = I^{T}$. Using Corollaries \ref{C:spectralwcontrol}, \ref{C:spectralwcontrolh} and Lemma \ref{L:zset1} it is easy to construct subsets $T_{\alpha} \subset T$ and maps $f_{\alpha} \colon I^{T_{\alpha}} \to I^{T_{\alpha}}$, $\alpha < \tau$, satisfying the following properties: 
\begin{itemize}
  \item[(a)]
  $|T_{0}| = \omega$; 
  \item[(b)]
 $T_{\alpha} \subset T_{\alpha +1}$ and $|T_{\alpha +1}\setminus T_{\alpha}| = \omega$;
  \item[(c)]
 $T = \cup\{ T_{\alpha} \colon \alpha < \tau\}$ and $T_{\alpha} = \cup\{ T_{\beta} \colon \beta < \alpha\}$ for each limit ordinal $\alpha < \tau$;
  \item[(d)]
  $\pi_{T_{\alpha +1}}(Z)$ is a fibered $Z$-set in $I^{T_{\alpha +1}}$ with respect to the projection $\pi^{T_{\alpha +1}}_{T_{\alpha}} \colon I^{T_{\alpha +1}} \to I^{T_{\alpha}}$;
  \item[(e)]
 $\pi_{T_{\alpha}}^{T_{\alpha +1}}\circ f_{\alpha +1} = f_{\alpha}\circ \pi_{T_{\alpha}}^{T_{\alpha +1}}$;
  \item[(f)]
  $f = \lim\{ f_{\alpha} \colon \alpha < \tau\}$ and $f_{\alpha} = \lim\{ f_{\beta} \colon \beta < \alpha\}$ for each limit ordinal $\alpha < \tau$; 
  \item[(g)]
 $f_{\alpha}|\pi_{T_{\alpha}}(Y) \colon \pi_{T_{\alpha}}(Y) \to \pi_{T_{\alpha}}(Z)$ is an embedding.
 \end{itemize} 

In order to construct required $Z_{\tau}$-embedding $h \colon I^{T} \to I^{T}$ we proceed by induction. Let $h_{0} = f_{0}$. Supposing that $f_{\beta}$'s have been constructed for all $\beta < \alpha$, construction of $f_{\alpha}$ for non-limit $\alpha$ is straightforward by using Lemma \ref{L:countable}. For a limit $\alpha$, we set $h_{\alpha} = \lim\{ h_{\beta} \colon \beta < \alpha\}$. Finally, required embedding is defined by letting $h = \lim\{ h_{\alpha} \colon \alpha < \tau\}$. Proposition \ref{P:zset2} guarantees that $h$ is a $Z_{\tau}$-embedding. By construction, $h|X = f|X$ and $h(I^{T}\setminus X) \subset I^{T}\setminus Z$.
\end{proof}


\section{Extension properties of the Stone-\v{C}ech corona}\label{S:corona}

We begin by introducing the following concept (compare to \cite{nov}, \cite{Michael}).

\begin{defin}\label{D:main}
A locally compact space $Y$ is a proper absolute extensor for a locally compact space $X$ (notation: $Y \in \operatorname{AE}_{p}(X)$) if any proper map $f \colon A \to Y$, defined on a closed $C^{\ast}$-embedded subset $A$ of $X$, admits a proper extension $\bar{f} \colon X \to Y$.
\end{defin}

Recall that regular closed subsets are closures of open subsets. 

\begin{thm}\label{T:dim}
Let $X$ be a  locally compact space which can be covered by at most $\tau$ compact subsets and each regular closed subset of which is $C^{\ast}$-embedded. Let also $L$ be a compact  $\operatorname{ANR}$-space embedded into the Tychonov cube $I^{\tau}$ as a $Z_{\tau}$-set. Then the following conditions are equivalent:
\begin{itemize}
  \item[(a)]
$L \in \operatorname{AE}(\beta X \setminus X)$; 
  \item[(b)]
$I^{\tau} \setminus L \in \operatorname{AE}_{p}(X)$. 
\end{itemize}  
\end{thm}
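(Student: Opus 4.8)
The plan is to pass back and forth between the space $X$, its Stone-\v{C}ech compactification $\beta X$, the corona $\beta X\setminus X$, and the non-metrizable testing complement $I^{\tau}\setminus L$, using the mapping replacement machinery of Section~\ref{S:TC} as the main technical engine. The key geometric observation to exploit is that a proper map $\varphi\colon W\to I^{\tau}\setminus L$ from a locally compact $W$ (covered by $\le\tau$ compacta) extends to a map $\widetilde{\varphi}\colon\beta W\to I^{\tau}$; properness of $\varphi$ forces $\widetilde{\varphi}(\beta W\setminus W)\subseteq L$, so restricting gives a map $\beta W\setminus W\to L$. Conversely, given a map $\psi\colon\beta W\setminus W\to L$, one wants to thicken it to a map $\beta W\to I^{\tau}$ whose restriction to $W$ lands in $I^{\tau}\setminus L$; here Proposition~\ref{L:main} applied with $Z=L$, $X=\beta W$, $Y=\beta W\setminus W$ (after extending $\psi$ arbitrarily to $\beta W$) and the collection $\{K_{\alpha}\}$ a cover of $W$ by $\le\tau$ compacta does exactly the job, producing $g\colon\beta W\to I^{\tau}$ with $g|_{\beta W\setminus W}=\psi$ and $g(W)\subseteq I^{\tau}\setminus L$, i.e. a proper map $W\to I^{\tau}\setminus L$. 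So there is a correspondence between proper maps into $I^{\tau}\setminus L$ and maps of the corona into $L$, compatible with restriction to closed subsets.

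For the implication $(a)\Rightarrow(b)$: let $f\colon A\to I^{\tau}\setminus L$ be a proper map on a closed $C^{\ast}$-embedded $A\subseteq X$. Then $\beta A$ is the closure of $A$ in $\beta X$ (by $C^{\ast}$-embeddedness), $\beta A\setminus A$ is a closed subset of $\beta X\setminus X$, and $f$ induces, by the first paragraph, a map $\beta A\setminus A\to L$. By $(a)$ this extends to $F\colon\beta X\setminus X\to L$. Now run the converse direction of the correspondence with $W=X$, $\psi=F$: extend $F$ over $\beta X$, apply Proposition~\ref{L:main} with $Z=L$ and $\{K_{\alpha}\}$ a cover of $X$ by $\le\tau$ compacta, obtaining a proper $\bar f\colon X\to I^{\tau}\setminus L$. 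The one point requiring care is to arrange $\bar f|_{A}=f$, not merely $\bar f|_{A}$ properly homotopic to $f$ or agreeing on the corona; this forces a relative version of Proposition~\ref{L:main} keeping the map fixed on $\beta A$ (i.e. taking $Y$ in that proposition to be $(\beta X\setminus X)\cup\beta A$ and starting the induction from a map already equal to the given one over $\beta A$). The hypothesis that regular closed subsets of $X$ are $C^{\ast}$-embedded is what lets one replace $A$ by a regular closed neighborhood and apply Lemma~\ref{L:SC}-type arguments to keep things functionally closed in $\beta X$.

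For $(b)\Rightarrow(a)$: let $\psi\colon C\to L$ be a map on a closed $C\subseteq\beta X\setminus X$. Choose a functionally closed set $\Phi$ in $\beta X$ with $\Phi\cap(\beta X\setminus X)$ a neighborhood (in the corona) of $C$ and $\Phi\cap X$ a $C^{\ast}$-embedded regular closed subset of $X$ (possible by the hypothesis on $X$, shrinking if necessary); then $\Phi=\beta(\Phi\cap X)$ and $\Phi\cap(\beta X\setminus X)=\beta(\Phi\cap X)\setminus(\Phi\cap X)$. Extend $\psi$ over this corona of $\Phi\cap X$ using that $L$ is an ANR, hence $L\in\operatorname{AE}$ of the relevant Lindel\"of-type pieces of the corona, or simply extend over a slightly larger closed set; then the correspondence of the first paragraph converts it to a proper map $A\to I^{\tau}\setminus L$ where $A=\Phi\cap X$ is closed and $C^{\ast}$-embedded in $X$. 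By $(b)$, $I^{\tau}\setminus L\in\operatorname{AE}_{p}(X)$ gives a proper extension $g\colon X\to I^{\tau}\setminus L$; passing back through the corona correspondence, $\widetilde g\colon\beta X\to I^{\tau}$ restricts to a map $\beta X\setminus X\to L$ which extends $\psi$ on $C$.

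\textbf{Main obstacle.} I expect the principal difficulty to be bookkeeping of $C^{\ast}$-embeddedness and functional closedness when transferring between $X$ and $\beta X$: one must ensure that the closed set on which a map is given in the corona comes from a functionally closed subset of $\beta X$ meeting $X$ in a $C^{\ast}$-embedded (indeed regular closed) set, so that the Stone-\v{C}ech compactification of that piece sits correctly inside $\beta X$ and its own corona is the trace of $\beta X\setminus X$. This is precisely where the two standing hypotheses on $X$ (covered by $\le\tau$ compacta; regular closed subsets $C^{\ast}$-embedded) are consumed, and it is what makes Proposition~\ref{L:main}, with its index set of size $\tau$ of compact subsets $\{K_{\alpha}\}$, the right tool rather than the classical Baire-category mapping replacement. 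The homotopy-theoretic input — that $L$ being a compact ANR makes $L$ an absolute extensor for finite-dimensional or metrizable strata, used to extend $\psi$ off $C$ — is comparatively routine given the standing use of extension theory.
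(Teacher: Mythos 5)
Your overall strategy --- translating between proper maps into $I^{\tau}\setminus L$ and maps of coronas into $L$ via Stone-\v{C}ech extensions, and using Proposition~\ref{L:main} as the mapping-replacement engine --- is the same as the paper's, and your treatment of (b)~$\Longrightarrow$~(a) matches the paper's proof (there the set $\operatorname{cl}_{X}(U\cap X)$ on which the replacement is performed is closed in $X$, hence covered by at most $\tau$ compacta, so Proposition~\ref{L:main} applies directly). Incidentally, no new ``relative version'' of Proposition~\ref{L:main} is needed: it is already relative to an arbitrary closed set $Y$, so one simply takes $Y=A\cup(\beta X\setminus X)$.

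The genuine gap is in (a)~$\Longrightarrow$~(b). To apply Proposition~\ref{L:main} on $\beta X$ relative to $Y=A\cup(\beta X\setminus X)$ you need a family $\{K_{\alpha}\}$ of at most $\tau$ compacta that covers $X\setminus A$ and is \emph{disjoint from} $A$ (the hypothesis $\bigcup K_{\alpha}\cap Y=\emptyset$); a cover of $X$ by $\le\tau$ compacta, which is what the hypotheses give you, does not produce this, because the open subset $X\setminus A$ of a locally compact space covered by $\tau$ compacta need not itself be a union of $\tau$ compacta. For instance $X=I^{\tau^{+}}$ satisfies all the standing hypotheses (compact, so one compact set suffices and every closed set is $C^{\ast}$-embedded), yet for $A$ a single point $X\setminus A$ is not a union of $\tau$ compacta, since otherwise the point would have pseudocharacter $\le\tau$. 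The paper closes this hole by first factoring the combined map $G\colon\beta X\to I^{\tau}$ (equal to $f$ on $A$ and to $g$ on $\beta X\setminus X$) through a compactum $Y$ of weight $\le\tau$ via the spectral theorem, $G=q\circ p$ with $\beta X\setminus X=p^{-1}(p(\beta X\setminus X))$; in $Y$ the complement of the closed set $p(A\cup(\beta X\setminus X))$ is open in a compactum of weight $\le\tau$, hence automatically a union of $\le\tau$ ($\sigma$-compact cozero) sets, so Proposition~\ref{L:main} applies there, and the resulting $H$ is pulled back by $p$. This factorization step is missing from your argument and cannot be dispensed with.
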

\begin{proof}
(a) $\Longrightarrow$ (b). 
Let $f \colon A \to I^{\tau} \setminus L$ be a proper map defined on a closed $C^{\ast}$-embedded subspace $A \subset X$. Then $\beta A = \operatorname{cl}_{\beta X}A$ and there is an extension $\bar{f} \colon \operatorname{cl}_{\beta X}A \to I^{\tau}$ of $f$. Since $f$ is proper, it follows that $\bar{f}(\operatorname{cl}_{\beta X}A \setminus A) \subset L$. By (a), $\bar{f}|(\operatorname{cl}_{\beta X}A \setminus A) \colon (\operatorname{cl}_{\beta X}A \setminus A) \to L$ can be extended to a map $g \colon \beta X \setminus X \to L$. Since $A \cup (\beta X \setminus X)$ is closed in $\beta X$ there exists a map $G \colon \beta X \to I^{\tau}$ such that $G|(\beta X \setminus X) = g$ and $G|A = f$. 

Using spectral theorem for $\tau$-spectra \cite[Theorem 1.3.4]{chibook}, we can find a compact space $Y$ of weight $\leq \tau$, and maps $p \colon \beta X \to Y$, $q \colon Y \to I^{\tau}$ such that $G = q \circ p$ and $\beta X \setminus X = p^{-1}(p(\beta X \setminus X))$. By Proposition \ref{L:main}, there exists a map $H \colon Y \to I^{\tau}$ such that $H|p(A\cup (\beta X \setminus X)) = q|p(A\cup (\beta X \setminus X))$ and $H(Y \setminus  p(A \cup (\beta X \setminus X))) \subset I^{\tau}\setminus L$.

It only remains to note that the map $F = H \circ p \colon \beta X \to I^{\tau}$ has the following properties: $F|(A \cup (\beta X \setminus X)) = G|(A\cup (\beta X \setminus X))$ and $F(X\setminus A ) \subset I^{\tau}\setminus L$. Consequently, $\widetilde{f} = F|X \colon X \to I^{\tau} \setminus L$ is a proper map extending $f$.

(b) $\Longrightarrow$ (a). Let $f \colon A \to L$ be a map defined on a closed subspace $A \subset \beta X \setminus X$. Since $L$ is an $\operatorname{ANR}$-space, we may assume without loss of generality that $f$ is already defined on the closure  $\operatorname{cl}_{\beta X}U $ of an open neighborhood $U$ of $A$ in $\beta X$. Note that $\operatorname{cl}_{\beta X}U = \operatorname{cl}_{\beta X}(U \cap X) = \operatorname{cl}_{\beta X}(\operatorname{cl}_{X}(U \cap X))$ and that according to our assumption $\operatorname{cl}_{X}(X\cap U)$ is $C^{\ast}$-embedded in $X$.  Since $\kappa (\operatorname{cl}_{X}(U\cap X)) \leq \tau$, we conclude, by Proposition \ref{L:main}, that there esxists a map $g \colon \operatorname{cl}_{\beta X}U \to I^{\tau}$ such that $g|(\operatorname{cl}_{\beta X}U \setminus \operatorname{cl}_{X}(U\cap X))= f|(\operatorname{cl}_{\beta X}U \setminus \operatorname{cl}_{X}(U\cap X))$ and $g(\operatorname{cl}_{X}(U \cap X)) \subset I^{\tau}\setminus L$.  By (b), the proper map $g|\operatorname{cl}_{X}(U \cap X) \colon \operatorname{cl}_{X}(U \cap X) \to I^{\tau}\setminus L$ has a proper extension $G \colon X \to I^{\tau}\setminus L$. Since $G$ is proper, its Stone-\v{C}ech extension $\widetilde{G}\colon \beta X \to I^{\tau}$ sends $\beta X \setminus X$ into $L$. Straightforward verification shows that $\widetilde{F}|A = f$.
\end{proof}

\begin{cor}\label{C:first}
Let $L$ be metrizable $\operatorname{ANR}$-compact space embedded into the Hilbert cube $I^{\omega}$ as a $Z$-set. Then the following conditions are equivalent for any locally compact and  Lindel\"{o}f space $X$:
\begin{itemize}
\item[(a)]
$L \in \operatorname{AE}(\beta X \setminus X)$;
\item[(b)]
$I^{\omega} \setminus L \in \operatorname{AE}_{p}(X)$;
\item[(c)]
$\operatorname{Cone}(L)\setminus L \in \operatorname{AE}_{p}(X)$.
\end{itemize}
\end{cor}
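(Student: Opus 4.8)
The plan is to deduce Corollary~\ref{C:first} from Theorem~\ref{T:dim} together with the standard Chapman-type identification of the complement of a $Z$-set in the Hilbert cube. First I would observe that a locally compact Lindel\"{o}f space $X$ is $\sigma$-compact, hence covered by countably many (i.e. at most $\tau=\omega$) compact subsets, and that in a Lindel\"{o}f (so normal and countably paracompact) space every closed subset is $C^{\ast}$-embedded; in particular every regular closed subset is. Thus $X$ satisfies the hypotheses of Theorem~\ref{T:dim} with $\tau=\omega$. Since $L$ is a metrizable compact $\operatorname{ANR}$ $Z$-embedded in $I^{\omega}$, Theorem~\ref{T:dim} immediately gives the equivalence (a) $\Longleftrightarrow$ (b). It remains to prove (b) $\Longleftrightarrow$ (c), i.e. that $I^{\omega}\setminus L\in\operatorname{AE}_{p}(X)$ if and only if $\operatorname{Cone}(L)\setminus L\in\operatorname{AE}_{p}(X)$.

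For the equivalence of (b) and (c) I would argue exactly along the lines sketched in the Introduction. The key point is that $L$, identified with the base $L\times\{0\}\subset\operatorname{Cone}(L)$, is a $Z$-set in the compact metrizable $\operatorname{ANR}$ $\operatorname{Cone}(L)$; hence by Chapman's Complement Theorem, for any $Z$-embedding of $L$ into $I^{\omega}$ the complement $I^{\omega}\setminus L$ is homeomorphic to $I^{\omega}\times(\operatorname{Cone}(L)\setminus L)$ (more precisely $I^{\omega}\setminus L\cong I^{\omega}\times\operatorname{Cone}(L)\setminus I^{\omega}\times L$, and the latter equals $I^{\omega}\times(\operatorname{Cone}(L)\setminus L)$). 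This reduces the task to showing that for a locally compact space $Z$ one has $Z\in\operatorname{AE}_{p}(X)$ if and only if $I^{\omega}\times Z\in\operatorname{AE}_{p}(X)$. One direction is trivial: a retraction $I^{\omega}\times Z\to \{*\}\times Z\cong Z$ is proper, so composing a proper extension into $I^{\omega}\times Z$ with it yields a proper extension into $Z$; equivalently, $Z$ is a proper retract of $I^{\omega}\times Z$. For the converse, given a proper map $f\colon A\to I^{\omega}\times Z$ on a closed $C^{\ast}$-embedded $A\subset X$, write $f=(f_{1},f_{2})$ with $f_{2}\colon A\to Z$ proper (since the projection $I^{\omega}\times Z\to Z$ is proper); use (c) to get a proper extension $\bar f_{2}\colon X\to Z$, and extend the $I^{\omega}$-coordinate $f_{1}\colon A\to I^{\omega}$ arbitrarily to a continuous $\bar f_{1}\colon X\to I^{\omega}$ using that $A$ is $C^{\ast}$-embedded and $I^{\omega}$ is an absolute extensor; then $\bar f=(\bar f_{1},\bar f_{2})\colon X\to I^{\omega}\times Z$ is proper because its composition with the proper projection is the proper map $\bar f_{2}$, and it extends $f$. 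Hence (b) $\Longleftrightarrow$ (c).

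The anticipated main obstacle is making the $\operatorname{AE}_{p}$-invariance under multiplication by $I^{\omega}$ fully precise in the proper category: one must check that properness is genuinely preserved, which is where the fact that $I^{\omega}\times Z\to Z$ is a proper map (a consequence of $I^{\omega}$ being compact) does the work, and that extending the $I^{\omega}$-coordinate over a $C^{\ast}$-embedded closed set is legitimate — here $C^{\ast}$-embeddedness of $A$ in $X$ (equivalently, in the Lindel\"{o}f setting, closedness) guarantees that maps into $I^{\omega}$ extend. A minor additional check is that the homeomorphism $I^{\omega}\setminus L\cong I^{\omega}\times(\operatorname{Cone}(L)\setminus L)$ supplied by the Complement Theorem is a genuine homeomorphism of locally compact spaces, so that $\operatorname{AE}_{p}(X)$, being a topological-plus-properness notion, transfers across it; this is automatic once the homeomorphism is in hand.
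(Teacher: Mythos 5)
Your proposal is correct and follows essentially the same route as the paper: (a)$\Leftrightarrow$(b) is obtained from Theorem~\ref{T:dim} with $\tau=\omega$ (the paper only notes $\kappa(X)\le\omega$, while you also verify the $C^{\ast}$-embedding hypothesis via normality), and (b)$\Leftrightarrow$(c) via the Edwards--Chapman identification $I^{\omega}\setminus L\cong I^{\omega}\times(\operatorname{Cone}(L)\setminus L)$ together with the invariance of $\operatorname{AE}_{p}(X)$ under multiplication by $I^{\omega}$, which you spell out in more detail than the paper does. No gaps.
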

\begin{proof}
Equivalence of (a) and (b) follows from Theorem \ref{T:main} since $\kappa(X) \leq \omega$ for any locally compact and Lindel\"{o}f space $X$. 

To prove the remaining equivalence, first note that by Edwards' theorem \cite[Corollary 2.3.23]{chibook}, $I^{\omega}\times \operatorname{Cone}(L)$ is homeomorphic to the Hilbert cube $I^{\omega}$. Further, by Chapman's complementt theorem \cite{cha}, the complements $I^{\omega}\setminus L$ and $I^{\omega}\times \operatorname{Cone}(L) \setminus I^{\omega}\times L = I^{\omega}\times (\operatorname{Cone}(L)\setminus L)$ are homeomorphic. Finally note that $I^{\omega}\times (\operatorname{Cone}(L)\setminus L) \in \operatorname{AE}_{p}(X)$ precisely when $\operatorname{Cone}(L)\setminus L \in \operatorname{AE}_{p}(X)$.
\end{proof}

\begin{cor}\label{C:dim}
The following conditions are equivalent for any locally compact and Lindel\"{o}f space $X$:
\begin{itemize}
  \item[(a)]
$\dim (\beta X \setminus X) \leq n$; 
  \item[(b)] 
$\dim_{p}X \leq n+1$, i.e. any proper map $f \colon A \to {\mathbb R}^{n+1}$, defined on a closed subspace $A \subset X$, can be extended to a proper map $\bar{f} \colon X \to {\mathbb R}^{n+1}$. 
\end{itemize} 
\end{cor}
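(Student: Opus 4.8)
The plan is to obtain this as the special case $L=S^{n}$ of Corollary \ref{C:first}, after translating both of the conditions appearing there into the dimension-theoretic language of the statement. First I would observe that $\beta X\setminus X$ is compact: since $X$ is locally compact it is open in $\beta X$, so $\beta X\setminus X$ is a closed subset of the compact space $\beta X$. I would then apply the classical characterization of covering dimension (one of the extension-theoretic facts used without specific reference) to the compact Hausdorff, hence normal, space $\beta X\setminus X$: namely, $\dim(\beta X\setminus X)\leq n$ holds if and only if every map of a closed subset of $\beta X\setminus X$ into the sphere $S^{n}$ extends over $\beta X\setminus X$, that is, if and only if $S^{n}\in\operatorname{AE}(\beta X\setminus X)$. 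So condition (a) is equivalent to $S^{n}\in\operatorname{AE}(\beta X\setminus X)$.

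Next, since $S^{n}$ is a metrizable compact $\operatorname{ANR}$, Corollary \ref{C:embedding} lets us $Z$-embed it into the Hilbert cube, and Corollary \ref{C:first} (equivalence of its (a) and (c), legitimate because $X$ is locally compact and Lindel\"{o}f) yields
\[
S^{n}\in\operatorname{AE}(\beta X\setminus X)\iff\operatorname{Cone}(S^{n})\setminus S^{n}\in\operatorname{AE}_{p}(X).
\]
It then remains to recognize the test space: the cone $\operatorname{Cone}(S^{n})$ is homeomorphic to the closed ball $D^{n+1}$ by a homeomorphism carrying the base $S^{n}\times\{0\}$ onto the boundary sphere $\partial D^{n+1}$ (e.g. $(\theta,t)\mapsto(1-t)\theta$), so that $\operatorname{Cone}(S^{n})\setminus S^{n}$ is homeomorphic to the open ball $D^{n+1}\setminus\partial D^{n+1}$, and hence to $\mathbb{R}^{n+1}$. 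Thus $\operatorname{Cone}(S^{n})\setminus S^{n}\in\operatorname{AE}_{p}(X)$ exactly when $\mathbb{R}^{n+1}\in\operatorname{AE}_{p}(X)$.

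Finally, $X$ is paracompact, being locally compact and Lindel\"{o}f, hence normal, so by Tietze's theorem every closed subspace of $X$ is $C^{\ast}$-embedded; consequently $\mathbb{R}^{n+1}\in\operatorname{AE}_{p}(X)$ in the sense of Definition \ref{D:main} is precisely condition (b). Chaining the three equivalences then gives the corollary. I do not anticipate a genuine obstacle: all of the substance is already packaged in Theorem \ref{T:dim} and Corollary \ref{C:first}, and the two remaining moves --- the elementary cone identification and the standard extension characterization of covering dimension --- are routine. The only points deserving a line of care are checking that the $C^{\ast}$-embeddedness built into $\operatorname{AE}_{p}$ is automatic for the closed subsets occurring in (b), and keeping in mind the $Z$-embedding hypothesis of Corollary \ref{C:first} (satisfied since $S^{n}$ is a metrizable $\operatorname{ANR}$-compactum).
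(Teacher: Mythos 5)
Your argument is correct and is exactly the derivation the paper intends (the corollary is stated without proof, but the Introduction makes clear it is the case $L=S^{n}$ of Corollary \ref{C:first} combined with the classical sphere-extension characterization of covering dimension and the identification $\operatorname{Cone}(S^{n})\setminus S^{n}\approx\mathbb{R}^{n+1}$). Your added care about $C^{\ast}$-embeddedness of closed subsets of the normal space $X$ correctly reconciles Definition \ref{D:main} with the phrasing of condition (b).
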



\begin{thm}\label{T:main}
Let $\tau >  \omega$, $L$ be a compact $\operatorname{ANR}$ embedded into $I^{\tau}$ as a $Z_{\tau}$-set and $X$ be a $Z_{\tau}$-set in $I^{\tau}$. Then the following conditions are equivalent:
\begin{itemize}
  \item[(i)]
$X \in \operatorname{AE}([L])$; 
  \item[(ii)]
$I^{\tau}\setminus X \in \operatorname{AE}_{p}^{\tau}([I^{\tau}\setminus L])$.  
\end{itemize}
\end{thm}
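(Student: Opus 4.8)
The plan is to establish a dictionary between extension problems for $X$ (a $Z_\tau$-set in $I^\tau$) and proper extension problems for its complement $I^\tau\setminus X$, and then to transport the two-sided implication in the definitions of the extension classes $\operatorname{AE}([L])$ and $\operatorname{AE}_p^\tau([I^\tau\setminus L])$ across this dictionary. The bridge is already furnished by Theorem \ref{T:dim} (in the form that is relevant here, with $\tau>\omega$): for a locally compact $M$ of weight $\leq\tau$ each of whose regular closed subsets is $C^\ast$-embedded and which is covered by at most $\tau$ compacta, we have $L\in\operatorname{AE}(\beta M\setminus M)$ if and only if $I^\tau\setminus L\in\operatorname{AE}_p(M)$, and the same with $L$ replaced by any compact $ANR$ $Z_\tau$-embedded in $I^\tau$; in particular it applies with $K$ in place of $L$ for any complex $K$ extensionally equivalent to $L$, after $Z_\tau$-embedding $K$ into $I^\tau$ via Corollary \ref{C:embedding}.

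First I would fix the key special instance of the bridge. By Lemma \ref{L:SC}, if $X$ is a $Z_\tau$-set in $I^\tau$ then $Y:=I^\tau\setminus X$ is open and $G_\delta$-dense in $I^\tau$, so $\beta Y=I^\tau$ and $\beta Y\setminus Y=X$; moreover every regular closed subset of $Y$ is $C^\ast$-embedded (Lemma \ref{L:SC}(iii)), $Y$ is locally compact of weight $\tau$, and $Y$ is covered by $\leq\tau$ compacta since $I^\tau$ is. Hence Theorem \ref{T:dim} applies to $Y$ and gives, for every compact $ANR$ $K$ that is a $Z_\tau$-set in $I^\tau$,
\[
K\in\operatorname{AE}(X)\iff K\in\operatorname{AE}(\beta Y\setminus Y)\iff I^\tau\setminus K\in\operatorname{AE}_p(Y)=\operatorname{AE}_p(I^\tau\setminus X).
\]
The same computation, applied to an \emph{arbitrary} locally compact $M$ of weight $\leq\tau$ of the indicated type (this is exactly the class occurring in the definition of $\operatorname{AE}_p^\tau([I^\tau\setminus L])$), yields $K\in\operatorname{AE}(\beta M\setminus M)\iff I^\tau\setminus K\in\operatorname{AE}_p(M)$; but to feed the definition of $\operatorname{AE}_p^\tau$ I need this for \emph{all} locally compact $M$ of weight $\leq\tau$, not only those satisfying the two side conditions of Theorem \ref{T:dim}. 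Here is where the argument must be set up carefully: either one shows that in the definition of $\operatorname{AE}_p^\tau([I^\tau\setminus L])$ one may restrict to $M$ of the form $I^\tau\setminus(\text{$Z_\tau$-set})$ — which is legitimate because, by Chapman-type / Edwards-type reductions already invoked in the paper, every $I^\tau\setminus K$ with $K$ a compact $ANR$ $Z_\tau$-set is of the form $Y'$ with $\beta Y'=I^\tau$ — or one uses Proposition \ref{L:main} directly to verify the proper mapping replacement on general $M$. I would take the first route, reducing the class of test spaces $M$ to complements of $Z_\tau$-sets, so that the bridge above applies verbatim to every relevant $M$.

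With the bridge in hand the equivalence is formal. For (i)$\Rightarrow$(ii): assume $X\in\operatorname{AE}([L])$. To show $Y=I^\tau\setminus X\in\operatorname{AE}_p^\tau([I^\tau\setminus L])$, take a (reduced) test space $M=I^\tau\setminus W$, $W$ a $Z_\tau$-set, with $I^\tau\setminus L\in\operatorname{AE}_p(M)$; by the bridge $L\in\operatorname{AE}(\beta M\setminus M)=\operatorname{AE}(W)$, so by (i) $X\in\operatorname{AE}(W)$, i.e.\ (choosing via Corollary \ref{C:embedding} a $Z_\tau$-embedding and transporting along Corollary \ref{C:1}, which guarantees that maps between $Z_\tau$-sets lift to proper maps of complements, so the property is embedding-independent) $X\in\operatorname{AE}(\beta M\setminus M)$, whence by the bridge again $I^\tau\setminus X\in\operatorname{AE}_p(M)$. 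Since $M$ was an arbitrary (reduced) test space, $Y\in\operatorname{AE}_p^\tau([I^\tau\setminus L])$. The converse (ii)$\Rightarrow$(i) runs the same chain of biconditionals backwards: given $L\in\operatorname{AE}(W)$ for a $Z_\tau$-set $W$, set $M=I^\tau\setminus W$, get $I^\tau\setminus L\in\operatorname{AE}_p(M)$ by the bridge, apply (ii) to get $I^\tau\setminus X\in\operatorname{AE}_p(M)$, and convert back to $X\in\operatorname{AE}(W)$; finally, since every space $Z$ with $L\in\operatorname{AE}(Z)$ is handled by a purely extension-theoretic reduction to the compact $ANR$ case (and hence to the $Z_\tau$-set-in-$I^\tau$ model), one concludes $X\in\operatorname{AE}([L])$.

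The main obstacle is the bookkeeping in the middle paragraph: matching the class of test spaces $M$ quantified over in the \emph{definition} of $\operatorname{AE}_p^\tau([I^\tau\setminus L])$ (all locally compact $M$ of weight $\leq\tau$) with the narrower class for which Theorem \ref{T:dim} supplies the bridge (those additionally satisfying the $C^\ast$-embedding and $\tau$-cover hypotheses). Everything else — independence of the $Z_\tau$-embedding chosen for $L$, for $X$, and for the auxiliary complexes $K\in[L]$ — is routine once Corollaries \ref{C:embedding} and \ref{C:1} and Proposition \ref{P:adjusted} are invoked, since those precisely say that the relevant complements are determined up to proper homeomorphism and that $\operatorname{AE}$-membership of a compact $ANR$ is an invariant of its extension class.
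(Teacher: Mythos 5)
Your reduction to a ``bridge'' supplied by Theorem \ref{T:dim} breaks down at the one place where the theorem has real content, namely the step in (ii)$\Rightarrow$(i) where you ``convert back to $X \in \operatorname{AE}(W)$'' from $I^{\tau}\setminus X \in \operatorname{AE}_{p}(M)$. That conversion is direction (b)$\Rightarrow$(a) of Theorem \ref{T:dim} with $X$ playing the role of $L$, and that direction essentially uses that $L$ is a compact $\operatorname{ANR}$: its proof begins by replacing a map $A \to L$ with a map defined on the closure of a neighborhood of $A$, which is exactly the step unavailable for an arbitrary $Z_{\tau}$-set $X$. So the bridge is not symmetric in the generality you need --- it holds in the direction ``$K \in \operatorname{AE}(\beta M\setminus M) \Rightarrow I^{\tau}\setminus K \in \operatorname{AE}_{p}(M)$'' for any compact $Z_{\tau}$-set $K$ (the proof of (a)$\Rightarrow$(b) never uses the $\operatorname{ANR}$ hypothesis, though you should say so explicitly since the theorem is not stated that way), but the converse direction is only available for $\operatorname{ANR}$s. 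The paper's proof of (ii)$\Rightarrow$(i) avoids this entirely by a direct construction: given $f \colon A \to X$ with $A$ closed in a compactum $Y$ satisfying $L \in \operatorname{AE}(Y)$, it uses Corollary \ref{C:embedding} and Proposition \ref{P:adjusted} to realize $A$ as $K \cap Y$ inside $I^{\tau}$, where $K$ is a copy of $I^{\tau}$ containing $A$ as a $Z_{\tau}$-set; then Proposition \ref{L:main} turns $f$ into a proper map $K\setminus A \to I^{\tau}\setminus X$ defined on a closed $C^{\ast}$-embedded subset of $I^{\tau}\setminus Y$, to which hypothesis (ii) applies directly (after Theorem \ref{T:dim} is applied only to $L$, where the $\operatorname{ANR}$ property is legitimately available). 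The Stone--\v{C}ech extension of the resulting proper map then restricts to the desired extension $Y \to X$. This geometric intermediary is the missing idea in your proposal; without it the implication (ii)$\Rightarrow$(i) does not follow.

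A secondary problem is your restriction of the test spaces $M$ in the definition of $\operatorname{AE}_{p}^{\tau}([I^{\tau}\setminus L])$ to complements of $Z_{\tau}$-sets. The justification you offer concerns the test \emph{complexes} $I^{\tau}\setminus K$, not the test \emph{spaces} $M$; the definition quantifies over all locally compact $M$ of weight $\leq \tau$ with $I^{\tau}\setminus L \in \operatorname{AE}_{p}(M)$, and nothing you cite lets you replace such an $M$ by a complement of a $Z_{\tau}$-set. (The paper's own proof of (i)$\Rightarrow$(ii) works with an arbitrary such $M$ and a closed $C^{\ast}$-embedded $B \subset M$, extending over $\operatorname{cl}_{\beta M}B$ and gluing along the corona, rather than reducing the class of $M$.) You correctly flagged the mismatch between the hypotheses of Theorem \ref{T:dim} and the class of test spaces as the delicate point, but the proposed resolution does not address it, and in any case the more serious obstruction is the asymmetry of the bridge described above.
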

\begin{proof}
(ii) $\Longrightarrow$ (i). We need to show that a  map $f \colon A \to X$, defined on a closed subset $A \subset Y$, where $Y$ is a compact space of weight $\leq \tau$ such that $L \in \operatorname{AE}(Y)$, has a continuous extension $g \colon Y \to X$. By Corollary \ref{C:embedding}, we masy assume that $A$ is embedded into $K = I^{\tau}$ as a $Z_{\tau}$-set. Note that $K$ is then the Stone-\v{C}ech compactification of $K\setminus A$ (Lemma \ref{L:SC}). Similarly, we may assume that $Y$ is also embedded into (a different copy of) $I^{\tau}$ as a $Z_{\tau}$-set. By Proposition \ref{P:adjusted}, it is possible to embed $K$ into $I^{\tau}$ in such a way that $K\cap Y = A$. Let $\bar{f} \colon A \to I^{\tau}$ denote an extension of $f$. Using Proposition \ref{L:main} we can find a map $h \colon K \to I^{\tau}$ such that $h|A = f$ and $h(K\setminus A) \subset I^{\tau}\setminus X$. Note that $h|(K\setminus A) \colon K\setminus A \to I^{\tau}\setminus X$ is proper. Since $K$ is the Stone-\v{C}ech compactification of $K\setminus A$, it follows that $K\setminus A$ is $C^{\ast}$-embedded in $I^{\tau}\setminus Y$. Next note that, by Lemma \ref{L:SC}(iii), $I^{\tau}\setminus Y$ satisfies assumptions of Theorem \ref{T:dim}. Consequently, since $L \in \operatorname{AE}(Y)$, we conclude by  Theorem \ref{T:dim}  that $I^{\tau} \setminus L \in \operatorname{AE}(I^{\tau}\setminus Y)$. Then, by(ii), the proper map $h|(K\setminus A) \colon K \setminus A \to I^{\tau}\setminus X$ admits a proper extension $\bar{h} \colon I^{\tau} \setminus Y \to I^{\tau}\setminus X$. Let $\bar{g} \colon I^{\tau} \to I^{\tau}$ be the extension of $\bar{h}$. Properness of $\bar{h}$ implies that $\bar{g}(Y) \subset X$. Then $g = \bar{g}|Y \colon Y \to X$ is the required extension of $f$.

(i) $\Longrightarrow$ (ii). Let now $f \colon B \to  I^{\tau}\setminus X$ be a proper map defined on a closed and $C^{\ast}$-embedded subset $B$ of a locally compact space $Y$ of weight $\leq \tau$ such that $I^{\tau}\setminus L \in \operatorname{AE}_{p}(Y)$. We need to construct a proper extension $\bar{f} \colon Y \to I^{\tau}\setminus X$ of $f$. Since $B$ is $C^{\ast}$-embedded in $Y$ it follows that $\operatorname{cl}_{\beta Y}B$ is the Stone-\v{C}ech compactification of $B$. Consequently there is the extension $g \colon \operatorname{cl}_{\beta Y}B \to I^{\tau}$ of $f$. Properness of $f$ implies that $g(\operatorname{cl}_{\beta Y}B \setminus B) \subset X$. Since $I^{\tau}\setminus L \in \operatorname{AE}_{p}(Y)$ we conclude, by Theorem \ref{T:dim}, that $L \in \operatorname{AE}(\beta Y \setminus Y)$. Thus, by (i), there exists an extension $h \colon \beta Y \setminus Y \to X$ of $g|(\operatorname{cl}_{\beta Y}B \setminus B) \colon \operatorname{cl}_{\beta Y}B \setminus B \to X$. Next consider the closed subset $A = (\beta Y \setminus Y) \cup B$ of $\beta Y$ and the map $h^{\prime} \colon A \to I^{\tau}$ defined be letting

\[ h^{\prime}(y) = 
\begin{cases}
h(y),\;\text{if}\; y \in \beta Y \setminus Y;\\
f(y), \;\text{if}\; y \in B \; .\\
\end{cases}
\] 

Next consider any extension $\bar{h} \colon \beta Y \to I^{\tau}$ of $h^{\prime}$. By Proposition \ref{L:main}, we can find a map $\bar{g} \colon \beta Y \to I^{\tau}$ such that $\bar{g}|A = h^{\prime}$ and $\bar{g}(\beta Y \setminus A) \subset I^{\tau}\setminus X$. Straighforward verification shows that $\bar{g}(Y) \subset I^{\tau}\setminus X$ and consequently $\bar{f} = \bar{g}|Y \colon Y \to I^{\tau}\setminus X$ is proper. It only remains to note that $\bar{f}|B = f$. 
\end{proof}


\section{Categorical Isomoprhisms}\label{S:category}

Recall (Corollary \ref{C:1}) that for any map $f \colon X \to Y$ between $Z_{\tau}$-sets of the Tychonov cube we can find a proper map $g \colon I^{\tau}\setminus X \to I^{\tau}\setminus Y$ such that $f = \widetilde{g}|X$, where $\widetilde{g} \colon I^{\tau} \to I^{\tau}$ is the unique extension of $g$. If $f$ is a homeomorphism, we may assume that $g$ is also a homeomorphism (this a $Z_{\tau}$-set unknotting theorem, \cite[Theorem 8.5.4]{chibook}). In other words, any map between $Z_{\tau}$-sets of the Tychonov cube can be obtained as the restriction of the Stone-\v{C}ech extension of a proper map between their complements, i.e. the correspondence $\lambda \colon C_{p}(I^{\tau}\setminus X, I^{\tau}\setminus Y) \to C(X,Y)$, defined by $\lambda(g) = \widetilde{g}|X$  is surjective. Below we show that (up to certain equivalence relation) $\lambda$ is in fact a bijection. Here we extend considerations of \cite{cdkm}, carried out for the Hilbert cube, to the Tychonov cube.

Let ${\mathcal Z}_{\tau}$ denote the category of $Z_{\tau}$-sets of the cube $I^{\tau}$ and their continuous maps. Let also ${\mathcal C}_{p}(\mathcal {Z}_{\tau})$ denote the category whose objects are complementts of $Z_{\tau}$-sets in $I^{\tau}$ and whose morphisms are equivalence classes of proper maps with respect to the following relation: two proper maps are equivalent if they are close in the continuously controlled (by the compactification $I^{\tau}$) coarse structure (\cite[Remark 2.29]{roe}). Recall that two proper maps $g_{1}, g_{2} \colon I^{\tau}\setminus X \to I^{\tau}\setminus Y$ are close if $\widetilde{g}_{1}(x) = \widetilde{g}_{2}(x)$ for any $x \in X$. The equivalence class with representative $g$ will be denoted by $\{ g\}$. With this in mind we have

\begin{pro}\label{P:category}
Let $\tau > \omega$. Then the correspondence $\lambda \colon {\mathcal C}_{p}({\mathcal Z}_{\tau}) \to {\mathcal Z}_{\tau}$, defined by letting:
\begin{itemize}
\item[(i)]
For $I^{\tau}\setminus X \in \mathcal{O}\mathcal{B}({\mathcal C}_{p}({\mathcal Z}_{\tau}))$, $\lambda (I^{\tau}\setminus X) = X$,
\item[(ii)]
For $\{ g\} \colon I^{\tau}\setminus X \to I^{\tau}\setminus Y \in \mathcal{MOR}({\mathcal C}_{p}(\mathcal{Z}_{\tau}))$, $\lambda(\{ g\} ) = \widetilde{g}|X$,
\end{itemize}
\noindent is an isomorphism of categories.
\end{pro}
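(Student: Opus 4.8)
The plan is to verify that $\lambda$ is a well-defined functor and then establish that it is bijective on objects and on morphisms. Well-definedness on morphisms is the first point: if $g_1, g_2 \colon I^\tau \setminus X \to I^\tau \setminus Y$ are close, i.e.\ $\widetilde{g_1}|X = \widetilde{g_2}|X$, then $\lambda(\{g_1\}) = \lambda(\{g_2\})$ by definition, so this is immediate. Functoriality is equally routine: for composable proper maps $g \colon I^\tau\setminus X \to I^\tau\setminus Y$ and $h \colon I^\tau\setminus Y \to I^\tau\setminus Z$, the composition $h\circ g$ is proper, its Stone-\v{C}ech extension is $\widetilde{h}\circ\widetilde{g}$ (by uniqueness of extensions to the Stone-\v{C}ech compactification, which is $I^\tau$ here by Lemma \ref{L:SC}), and restricting to $X$ gives $(\widetilde{h}|Y)\circ(\widetilde{g}|X)$ since $\widetilde{g}(X)\subset Y$; the identity is preserved trivially. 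Surjectivity on objects is Corollary \ref{C:embedding}, and surjectivity on morphisms is exactly Corollary \ref{C:1} (the map $g$ there realizes an arbitrary $f \colon X\to Y$ as $\widetilde{g}|X$).

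The substance of the proof is injectivity on morphisms, i.e.\ that if two proper maps $g_1, g_2 \colon I^\tau\setminus X \to I^\tau\setminus Y$ satisfy $\widetilde{g_1}|X = \widetilde{g_2}|X = f$, then $g_1$ and $g_2$ are close in the continuously controlled coarse structure of $I^\tau\setminus Y$ with respect to the compactification $I^\tau$. First I would record that being close in the continuously controlled structure means precisely $\widetilde{g_1}(x) = \widetilde{g_2}(x)$ for all $x\in X$ (this is the definition recalled just before the statement), so in fact the two conditions coincide and there is essentially nothing to prove: $\{g_1\} = \{g_2\}$ by the very definition of the equivalence relation. Thus injectivity is definitional once one has correctly identified the equivalence relation with the relation $\widetilde{g_1}|X = \widetilde{g_2}|X$. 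The only thing needing a word is that the extensions $\widetilde{g_i}$ are well-defined: since $g_i$ is proper and $I^\tau$ is the Stone-\v{C}ech compactification of both $I^\tau\setminus X$ and $I^\tau\setminus Y$ (Lemma \ref{L:SC}(i), using $\tau>\omega$), each $g_i$ extends uniquely to a map $\widetilde{g_i}\colon I^\tau\to I^\tau$, and properness forces $\widetilde{g_i}(X)\subset Y$.

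I would also note the one point that requires a small argument beyond bookkeeping: that $\lambda$ is well-defined on objects requires knowing that every object of ${\mathcal C}_p({\mathcal Z}_\tau)$ is genuinely the complement of a $Z_\tau$-set, which is built into the definition of the category, and that the assignment $I^\tau\setminus X \mapsto X$ is unambiguous, i.e.\ a $Z_\tau$-set is recovered from its complement. This last fact is the $Z_\tau$-set unknotting theorem (\cite[Theorem 8.5.4]{chibook}, quoted in the text preceding the proposition): if $I^\tau\setminus X$ and $I^\tau\setminus X'$ coincide as subspaces of $I^\tau$, then $X = X'$ as sets, so there is no ambiguity; more substantively, a homeomorphism $I^\tau\setminus X \to I^\tau\setminus X'$ extends to a homeomorphism of $I^\tau$ carrying $X$ to $X'$, so $\lambda$ is compatible with the isomorphism relations on both sides. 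Assembling these observations — functoriality and well-definedness (routine, via Lemma \ref{L:SC}), surjectivity on objects (Corollary \ref{C:embedding}), surjectivity on morphisms (Corollary \ref{C:1}), and injectivity on morphisms (immediate from the definition of the equivalence relation) — yields that $\lambda$ is an isomorphism of categories. The main (mild) obstacle is purely expository: making sure the reader sees that the continuously controlled closeness relation is \emph{defined} so as to make $\lambda$ injective, so that the theorem is really a compilation of the structural results of Sections \ref{S:TC} rather than a new computation.
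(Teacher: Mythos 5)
Your proposal is correct and follows the same route as the paper, which itself only records that well-definedness on morphisms is immediate from the definition of the closeness relation and that surjectivity on morphisms is Corollary \ref{C:1}, leaving the remaining bookkeeping (functoriality via uniqueness of extensions over $\beta(I^{\tau}\setminus X)=I^{\tau}$, Lemma \ref{L:SC}) to the reader exactly as you supply it. One small correction: surjectivity on objects needs no appeal to Corollary \ref{C:embedding}, since the objects of ${\mathcal Z}_{\tau}$ are by definition the $Z_{\tau}$-sets of $I^{\tau}$ and $X\mapsto I^{\tau}\setminus X$ is already a set-theoretic inverse to $\lambda$ on objects.
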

\begin{proof}
Structurally the proof follows that of \cite[Theorem 2]{cdkm}, but is much simpler and is left to the reader. Fact that $\lambda$ is well defined on morphisms is a direct consequence of the definition of the closeness relation. The fact that $\lambda$ is surjective on morphisms, as noted above, follows from Corollary \ref{C:1}.
\end{proof}

Next we consider homotopy categories. Let $\mathcal{H}({\mathcal Z}_{\tau})$ denote the category whose objects are same as in $\mathcal{Z}_{\tau}$ and morphisms are homotopy classes of maps. Similarly $\mathcal{H}_{p}(\mathcal{Z}_{\tau})$ denotes the category whose objects are same as in $\mathcal{C}_{p}(\mathcal{Z}_{\tau})$ and morphisms are proper homotopy classes of proper maps. First, we need the following observation.

\begin{lem}\label{L:2}
Let $\tau > \omega$ and $f_{0}, f_{1} \colon X \to Y$ be two maps between $Z_{\tau}$-sets of the Tychonov cube $I^{\tau}$. Suppose also that $g_{0}, \colon g_{1} \colon I^{\tau}\setminus X \to I^{\tau}\setminus Y$ are proper maps such that $f_{k} = \widetilde{g}_{k}|X$, $k = 0,1$. Then $f_{0}\simeq f_{0}$ iff $g_{0} \simeq_{p} g_{1}$.
\end{lem}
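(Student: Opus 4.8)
\textbf{Proof proposal for Lemma \ref{L:2}.}

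The plan is to pass, in both directions, between a homotopy and a proper homotopy by treating the homotopies themselves as maps on $Z_\tau$-sets and their complements. For the direction $g_0\simeq_p g_1\Rightarrow f_0\simeq f_1$, I would start from a proper homotopy $G\colon (I^\tau\setminus X)\times[0,1]\to I^\tau\setminus Y$ with $G_0=g_0$, $G_1=g_1$. Observe that $X\times[0,1]$ is a $Z_\tau$-set in $I^\tau\times[0,1]$ (this follows from Proposition \ref{P:zset2}, since adding a metrizable factor does not destroy the fibered $Z$-set conditions), and that $I^\tau\times[0,1]$ is the Stone-\v Cech compactification of $(I^\tau\setminus X)\times[0,1]$ (Lemma \ref{L:SC}, as the latter is open and $G_\delta$-dense; or directly from $\beta(Y\times[0,1])=\beta Y\times[0,1]$ for compact $[0,1]$). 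Since $G$ is proper, its Stone-\v Cech extension $\widetilde G\colon I^\tau\times[0,1]\to I^\tau$ carries $X\times[0,1]$ into $Y$; restricting gives a homotopy $\widetilde G|(X\times[0,1])\colon X\times[0,1]\to Y$ between $\widetilde{g_0}|X=f_0$ and $\widetilde{g_1}|X=f_1$. Here I use that $\widetilde{G}|(X\times\{k\})$ agrees with $\widetilde{g_k}|X$, which holds because $\widetilde{G}$ and the obvious extension of $g_k$ agree on the dense set $(I^\tau\setminus X)\times\{k\}$.

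For the converse $f_0\simeq f_1\Rightarrow g_0\simeq_p g_1$, I would take a homotopy $F\colon X\times[0,1]\to Y$ with $F_k=f_k$, and apply the mapping-replacement machinery to produce a proper homotopy on the complements whose end maps are close (in the sense of Proposition \ref{P:category}) to $g_0$ and $g_1$. Concretely: extend $F$ to $\bar F\colon I^\tau\times[0,1]\to I^\tau$; by Proposition \ref{L:main} applied to the $Z_\tau$-set $Y\subset I^\tau$, the compactum $I^\tau\times[0,1]$, its closed subset $X\times[0,1]$, and the compact sets $K_\alpha$ coming from an exhaustion of the open complement $(I^\tau\setminus X)\times[0,1]$, we obtain $\bar H\colon I^\tau\times[0,1]\to I^\tau$ with $\bar H|(X\times[0,1])=F$ and $\bar H((I^\tau\setminus X)\times[0,1])\subset I^\tau\setminus Y$. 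Then $H=\bar H|((I^\tau\setminus X)\times[0,1])$ is a proper homotopy between proper maps $h_0,h_1\colon I^\tau\setminus X\to I^\tau\setminus Y$ with $\widetilde{h_k}|X=f_k=\widetilde{g_k}|X$, i.e. $h_k$ is close to $g_k$. It remains to upgrade closeness to proper homotopy: by Proposition \ref{P:category} (or directly by Lemma \ref{L:main} with domain $I^\tau\times[0,1]$, the closed subset $X\times[0,1]\cup (I^\tau\setminus X)\times\{0,1\}$ on which one prescribes the given values, and the complement as the union of the $K_\alpha$), a pair of close proper maps is properly homotopic, so $g_k\simeq_p h_k$ and hence $g_0\simeq_p h_0\simeq_p h_1\simeq_p g_1$.

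The main obstacle I anticipate is bookkeeping around the boundary of $[0,1]$: one must arrange the mapping replacement so that it does not disturb the prescribed end maps, which forces the use of the version of Proposition \ref{L:main} that fixes a closed $C^\ast$-embedded set containing both $X\times[0,1]$ and the two end copies $(I^\tau\setminus X)\times\{0,1\}$ (the hypothesis of Proposition \ref{L:main} that the $K_\alpha$ miss $Y$, i.e. miss the fixed closed set, is exactly what makes this work, since those end maps already land in $I^\tau\setminus Y$). A secondary point worth checking is that $X\times[0,1]$ is genuinely a $Z_\tau$-set in $I^\tau\times[0,1]$ and that $I^\tau\times[0,1]\cong I^\tau$ as a compactification of the complement, so that all the cited lemmas apply verbatim; both are routine given Proposition \ref{P:zset2} and Lemma \ref{L:SC}. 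Everything else is a diagram chase on dense subsets, and I would leave those verifications to the reader in the style of Proposition \ref{P:category}.
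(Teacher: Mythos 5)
Your first direction ($g_0\simeq_p g_1\Rightarrow f_0\simeq f_1$) is exactly the paper's argument: extend the proper homotopy over $I^\tau\times[0,1]=\beta\bigl((I^\tau\setminus X)\times[0,1]\bigr)$ and restrict to the corona. For the converse the paper is more economical: it applies Proposition \ref{L:main} \emph{once}, to the closed set $X\times[0,1]\cup I^\tau\times\{0,1\}$ with the prescribed data $F$ on $X\times[0,1]$ and $\widetilde g_k$ on $I^\tau\times\{k\}$, and the restriction of the resulting map to $(I^\tau\setminus X)\times[0,1]$ is already a proper homotopy from $g_0$ to $g_1$ on the nose. Your two-step version (first replace $\bar F$ rel $X\times[0,1]$ to get a proper homotopy $h_0\simeq_p h_1$ with $h_k$ close to $g_k$, then upgrade closeness to proper homotopy) reaches the same conclusion, but the second step is not a shortcut: carried out ``directly'' as you describe, it is precisely the paper's one-shot argument applied to the constant homotopy, so you end up doing the same construction twice.

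Two points need repair. First, Proposition \ref{P:category} does not yield ``close proper maps are properly homotopic''; it concerns the non-homotopy category and only records that close maps induce the same map on coronas. The statement you want is Corollary \ref{L:close}, but in the paper that corollary is \emph{deduced from} Lemma \ref{L:2}, so citing either of them here would be circular. You must therefore rely on your parenthetical direct argument, and it should be promoted from an aside to the actual proof. Second, the set $X\times[0,1]\cup(I^\tau\setminus X)\times\{0,1\}$ you feed to Proposition \ref{L:main} is not closed in $I^\tau\times[0,1]$ (its closure is $X\times[0,1]\cup I^\tau\times\{0,1\}$); you must work with the closure and observe that the prescribed data extends continuously to it, namely by $\widetilde g_k$ on the full slices $I^\tau\times\{k\}$, the two pieces agreeing on $X\times\{k\}$ because $\widetilde g_k|X=f_k$. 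With those adjustments, and the (routine, but worth stating) remark that the complement $(I^\tau\setminus X)\times(0,1)$ of this closed set is covered by $\tau$ compacta missing it, your argument is sound; the worry about $X\times[0,1]$ being a $Z_\tau$-set is not actually needed anywhere, since Proposition \ref{L:main} only requires the \emph{target} set $Y$ to be a $Z_\tau$-set.
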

\begin{proof}
Let $F \colon X \times [0,1] \to Y$ be a homotopy between $f_{0}$ and $f_{1}$. Consider the map $H \colon X\times [0,1] \cup I^{\tau} \times \{0,1\} \to I^{\tau}$ defined by letting

\[ H(z,t) = 
\begin{cases}
F(z,t), \; \text{if}\; (z,t) \in X\times [0,1],\\
\widetilde{g}_{k}(z),\; \text{if}\; z \in I^{\tau} \times \{0,1\} , k = 0,1.\\
\end{cases}
\]

By Proposition \ref{L:main}, there exists a map $\widetilde{G} \colon I^{\tau}\times [0,1] \to I^{\tau}$ such that $\widetilde{G}|(X\times [0,1] \cup I^{\tau} \times \{0,1\}) = H$ and $\widetilde{G}(I^{\tau}\times [0,1] \setminus (X\times [0,1] \cup I^{\tau} \times \{0,1\})) \subset I^{\tau}\setminus Y$. Clearly, $G = \widetilde{G}|((I^{\tau}\setminus X) \times [0,1]) \colon (I^{\tau}\setminus X) \times [0,1] \to I^{\tau}\setminus Y$ is a proper homotopy between $g_{0}$ and $g_{1}$.

Conversely, suppose that $G \colon (I^{\tau}\setminus X) \times [0,1] \to I^{\tau}\setminus Y$ is a proper homotopy between $g_{0}, g_{1} \colon I^{\tau}\setminus Y \to I^{\tau}\setminus X$. Note that by Lemma \ref{L:SC}, $I^{\tau}\times [0,1]$ is the Stone-\v{C}ech compactification of the product $(I^{\tau}\setminus X) \times [0,1]$. Consequently, $G$ admits an extension $\widetilde{G} \colon I^{\tau} \times [0,1]$ such that $\widetilde{G}(X \times [0,1]) \subset Y$. It is clear that $H = \widetilde{G}|X\times [0,1] \colon X \times [0,1]\to Y$ is a homotopy between $f_{0}$ and $f_{1}$.
\end{proof}

\begin{cor}\label{L:close}
Let $\tau > \omega$ and $X$ and $Y$ be $Z_{\tau}$-sets in $I^{\tau}$. If proper maps $g_{0}, g_{1} \colon T^{\tau}\setminus X \to I^{\tau}\setminus Y$ are close with respect to the continuously controlled coarse structure induced by $I^{\tau}$, then $g_{0}$ and $g_{1}$ are properly homotopic.
\end{cor}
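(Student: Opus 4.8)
The plan is to deduce Corollary~\ref{L:close} directly from Lemma~\ref{L:2}. Given close proper maps $g_{0},g_{1}\colon I^{\tau}\setminus X\to I^{\tau}\setminus Y$, the first step is to pass to their Stone-\v{C}ech extensions $\widetilde{g}_{0},\widetilde{g}_{1}\colon I^{\tau}\to I^{\tau}$; since $\tau>\omega$ and $X$, $Y$ are $Z_{\tau}$-sets, $I^{\tau}$ is the Stone-\v{C}ech compactification of $I^{\tau}\setminus X$ by Lemma~\ref{L:SC}, so these extensions exist and are unique, and properness forces $\widetilde{g}_{k}(X)\subset Y$. Set $f_{k}=\widetilde{g}_{k}|X\colon X\to Y$ for $k=0,1$.

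The key observation is that closeness in the continuously controlled coarse structure induced by $I^{\tau}$ means exactly that $\widetilde{g}_{0}(x)=\widetilde{g}_{1}(x)$ for every $x\in X$ (this is the definition recalled just before Proposition~\ref{P:category}). Hence $f_{0}=\widetilde{g}_{0}|X=\widetilde{g}_{1}|X=f_{1}$, so in particular $f_{0}\simeq f_{1}$ (they are literally equal). Now apply Lemma~\ref{L:2}: since $f_{k}=\widetilde{g}_{k}|X$ and $f_{0}\simeq f_{1}$, the lemma yields $g_{0}\simeq_{p}g_{1}$, i.e.\ $g_{0}$ and $g_{1}$ are properly homotopic, which is the desired conclusion.

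There is really no substantial obstacle here; the corollary is a packaging of Lemma~\ref{L:2} together with the definition of the closeness relation. The only point requiring a line of care is the matching of vocabulary: one must note that ``close with respect to the continuously controlled coarse structure induced by $I^{\tau}$'' unwinds to the condition $\widetilde{g}_{0}|X=\widetilde{g}_{1}|X$, after which the equality $f_{0}=f_{1}$ makes the hypothesis $f_{0}\simeq f_{1}$ of Lemma~\ref{L:2} trivially satisfied. I would also remark in passing that one may take the constant proper homotopy on the coarse-structure level as an alternative first reduction, but invoking Lemma~\ref{L:2} verbatim is cleanest. Thus the proof is a two-sentence application of the lemma, and I would present it as such.

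\begin{proof}
By Lemma~\ref{L:SC}, $I^{\tau}$ is the Stone-\v{C}ech compactification of both $I^{\tau}\setminus X$ and $I^{\tau}\setminus Y$, so $g_{0}$ and $g_{1}$ admit (unique) extensions $\widetilde{g}_{0},\widetilde{g}_{1}\colon I^{\tau}\to I^{\tau}$, and properness gives $\widetilde{g}_{k}(X)\subset Y$. Put $f_{k}=\widetilde{g}_{k}|X\colon X\to Y$. By definition of the continuously controlled (by $I^{\tau}$) coarse structure, the assumption that $g_{0}$ and $g_{1}$ are close means precisely that $\widetilde{g}_{0}(x)=\widetilde{g}_{1}(x)$ for every $x\in X$; hence $f_{0}=f_{1}$, and in particular $f_{0}\simeq f_{1}$. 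Applying Lemma~\ref{L:2} we conclude that $g_{0}\simeq_{p}g_{1}$.
\end{proof}
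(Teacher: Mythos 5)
Your proof is correct and follows essentially the same route as the paper: unwind closeness in the continuously controlled coarse structure to agreement of the Stone-\v{C}ech extensions on the corona (the paper cites \cite[Theorem 2.27]{roe} for this, which is the same fact you take from the discussion preceding Proposition~\ref{P:category}), and then apply Lemma~\ref{L:2}.
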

\begin{proof}
\cite[Theorem 2.27]{roe} implies that coarsely close proper maps coincide on the Stone-\v{C}ech corona. Consequently, by Lemma \ref{L:2}, they are properly homotopic.
\end{proof}

Now let us define a functor $\mu  \colon {\mathcal H}_{p}({\mathcal Z}_{\tau}) \to {\mathcal H}({\mathcal Z}_{\tau})$ between these homotopy categories. The following statement is parallel to \cite[Proposition 10]{cdkm}.

\begin{pro}\label{P:hcategory}
Let $\tau > \omega$. Then the correspondence $\mu \colon {\mathcal H}_{p}({\mathcal Z}_{\tau}) \to {\mathcal H}({\mathcal Z}_{\tau})$, defined by letting:
\begin{itemize}
\item[(i)]
For $I^{\tau}\setminus X \in \mathcal{O}\mathcal{B}({\mathcal H}_{p}({\mathcal Z}_{\tau}))$, $\mu (I^{\tau}\setminus X) = X $,
\item[(ii)]
For $[g] \colon I^{\tau}\setminus X \to I^{\tau}\setminus Y \in \mathcal{MOR}({\mathcal H}_{p}(\mathcal{Z}_{\tau}))$, $\mu([g]) = [\widetilde{g}|X]$,
\end{itemize}
\noindent is an isomorphism of categories.
\end{pro}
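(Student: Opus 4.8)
The plan is to verify the four things a functor requires (well-definedness on objects, well-definedness on morphisms, preservation of composition, and bijectivity at the level of hom-sets), leaning almost entirely on the machinery already assembled: Corollary \ref{C:1} (every map between $Z_\tau$-sets lifts to a proper map of complements), Lemma \ref{L:2} (the correspondence $f \leftrightarrow g$ respects homotopy in both directions), and Corollary \ref{L:close} (coarsely close proper maps are properly homotopic). The strategy mirrors the proof of Proposition \ref{P:category}, but now one works modulo homotopy rather than modulo closeness, so Lemma \ref{L:2} does the heavy lifting in place of the definition of the closeness relation.

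First I would check that $\mu$ is well defined on morphisms: if $g_0 \simeq_p g_1$ are properly homotopic proper maps $I^\tau\setminus X \to I^\tau\setminus Y$, then by Lemma \ref{L:2} (applied with $f_k = \widetilde{g}_k|X$) we get $\widetilde{g}_0|X \simeq \widetilde{g}_1|X$, so $[\widetilde{g}_0|X] = [\widetilde{g}_1|X]$ in ${\mathcal H}({\mathcal Z}_\tau)$; hence $\mu([g])$ does not depend on the chosen representative. Functoriality is then routine: $\mu$ sends the identity to the identity because the Stone-\v{C}ech extension of the identity proper map is the identity of $I^\tau$, and it preserves composition because $\widetilde{g\circ h}|X = (\widetilde{g}|Y)\circ(\widetilde{h}|X)$ (uniqueness of Stone-\v{C}ech extensions forces $\widetilde{g\circ h} = \widetilde{g}\circ\widetilde{h}$, using that $\widetilde{h}$ carries $X$ into $Y$). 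Surjectivity of $\mu$ on morphisms is exactly Corollary \ref{C:1}: given any map $f \colon X \to Y$ there is a proper $g$ with $\widetilde{g}|X = f$, hence $\mu([g]) = [f]$. Injectivity of $\mu$ on morphisms is the forward implication of Lemma \ref{L:2}: if $\mu([g_0]) = \mu([g_1])$, i.e. $\widetilde{g}_0|X \simeq \widetilde{g}_1|X$, then $g_0 \simeq_p g_1$, so $[g_0] = [g_1]$. Since $\mu$ is a bijection on objects by construction and a bijection on each hom-set, it is an isomorphism of categories.

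The one point deserving slightly more care — and the place I expect the only real friction — is checking that $\mu$ is well defined \emph{on objects}, which amounts to observing that ${\mathcal H}_p({\mathcal Z}_\tau)$ is genuinely a category whose objects biject with those of ${\mathcal H}({\mathcal Z}_\tau)$: one must know that distinct $Z_\tau$-sets $X \neq X'$ in $I^\tau$ can have homeomorphic complements, so that the assignment $I^\tau\setminus X \mapsto X$ is only well defined once we fix the embedding (equivalently, remember the compactification $I^\tau$ as part of the datum). This is handled exactly as in Section \ref{S:category}: the objects of ${\mathcal C}_p({\mathcal Z}_\tau)$ and ${\mathcal H}_p({\mathcal Z}_\tau)$ are by definition complements \emph{together with} their canonical compactification $I^\tau$, so $\lambda$ and $\mu$ read off the corona in a canonical way, and the $Z_\tau$-set unknotting theorem \cite[Theorem 8.5.4]{chibook} guarantees that an ambient homeomorphism realizing any homeomorphism of coronas can be arranged. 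Beyond this, the whole argument is a transcription of the proof of \cite[Proposition 10]{cdkm} into the Tychonov-cube setting, with Proposition \ref{L:main} supplying in every place the mapping-replacement step that the Hilbert-cube argument obtained from Baire category; I would therefore present the proof as ``parallel to the proof of Proposition \ref{P:category}, with Lemma \ref{L:2} replacing the closeness relation,'' and leave the routine verifications to the reader, as the paper does for Proposition \ref{P:category}.
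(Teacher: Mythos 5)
Your proposal is correct and follows essentially the same route as the paper, which likewise rests on Lemma \ref{L:2} for well-definedness (and, implicitly, injectivity) and on Corollary \ref{C:1} for surjectivity on morphisms, leaving the remaining verifications to the reader. Your version merely writes out those routine checks explicitly --- and your worry about well-definedness on objects is unnecessary, since objects of ${\mathcal H}_{p}({\mathcal Z}_{\tau})$ are literal subsets of a fixed $I^{\tau}$, so $X$ is recovered as the set-theoretic complement.
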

\begin{proof}
One part of Lemma \ref{L:2} shows that $\mu$ is well defined. The other part guarantees that $\mu$ is surjective on morphisms. The rest is straightforward and left to the reader. 
\end{proof}


In light of above considerations and the role the closeness relation associated to the continuously controlled coarse structure (induced by the Stone-\v{C}ech compactification), we would like to investigate this concept a bit further. For locally compact and paracompact spaces \cite[Theorem 2.27]{roe} characterizes close proper maps between such spaces as those whose extensions to the Stone-\v{C}ech compactifications coincide on the Stone-\v{C}ech corona. For Lindel\"{o}f spaces we have the following statement.

\begin{pro}
Let  $f, g \colon X \to Y$ be proper maps between locally compact and Lindel\"{o}f spaces. Then the following conditions are equivalent:
\begin{itemize}
\item[(i)]
$f$ and $g$ are close in the continuously controlled (by the Stone-\v{C}ech compactification) coarse structure;
\item[(ii)]
$\widetilde{f}(x) = \widetilde{g}(x)$ for any $x \in \beta X \setminus X$;
\item[(iii)]
There is a compact subset $C \subset X$ such that $f(x) = g(x)$ for any $x \in X \setminus C$.
\end{itemize}
\end{pro}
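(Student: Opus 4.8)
The plan is to establish the three-way equivalence by combining (i)$\Leftrightarrow$(ii), which is essentially known, with the new equivalence (ii)$\Leftrightarrow$(iii), of which only (ii)$\Rightarrow$(iii) requires real work. Since a locally compact Lindel\"{o}f space is paracompact, (i)$\Leftrightarrow$(ii) is precisely \cite[Theorem 2.27]{roe}; alternatively, (i)$\Rightarrow$(ii) can be seen directly by noting that the image of the diagonal product $\widetilde{f}\triangle\widetilde{g}\colon\beta X\to\beta Y\times\beta Y$ equals the closure of $E=\{(f(x),g(x))\colon x\in X\}$, that properness of $f$ forces $\widetilde{f}(\beta X\setminus X)\subseteq\beta Y\setminus Y$, and hence that for $\xi\in\beta X\setminus X$ the point $(\widetilde{f}(\xi),\widetilde{g}(\xi))$ lies in $\operatorname{cl}E\cap\bigl((\beta Y\setminus Y)\times\beta Y\bigr)$, which the continuously controlled coarse structure confines to the diagonal of $\beta Y\setminus Y$; so $\widetilde{f}(\xi)=\widetilde{g}(\xi)$. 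Moreover (iii)$\Rightarrow$(ii) is immediate: if $f=g$ off a compact set $C$, then the closed subset $\{\xi\in\beta X\colon\widetilde{f}(\xi)=\widetilde{g}(\xi)\}$ of $\beta X$ contains $X\setminus C$, which is dense in $\beta X\setminus C\supseteq\beta X\setminus X$.

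For (ii)$\Rightarrow$(iii), assume $\widetilde{f}=\widetilde{g}$ on $\beta X\setminus X$ and put $D=\{x\in X\colon f(x)\neq g(x)\}$; the goal is to show $\operatorname{cl}_{X}D$ is compact. As $X$ is locally compact and Lindel\"{o}f it is $\sigma$-compact, so write $X=\bigcup_{n}U_{n}$ with $U_{n}$ open and $\operatorname{cl}_{X}U_{n}\subseteq U_{n+1}$. If $\operatorname{cl}_{X}D$ were noncompact, then $D\setminus\operatorname{cl}_{X}U_{n}$ would be infinite for every $n$, so one could pick distinct $x_{n}\in D\setminus\operatorname{cl}_{X}U_{n}$. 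The set $\{x_{n}\}$ then meets every compact subset of $X$ in a finite set, hence is closed and discrete in $X$; since $X$ is normal it is $C^{\ast}$-embedded, and because $X$ is open in $\beta X$ we get that $\operatorname{cl}_{\beta X}\{x_{n}\}$ is a copy of $\beta\omega$ with $\operatorname{cl}_{\beta X}\{x_{n}\}\setminus\{x_{n}\}\subseteq\beta X\setminus X$. Properness of $f$ and $g$ guarantees that $\{f(x_{n})\}$ and $\{g(x_{n})\}$ also meet every compact subset of $Y$ finitely, and in particular that $f^{-1}(\{p\})$ and $g^{-1}(\{p\})$ are finite for every $p\in Y$.

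The crucial step is to extract an infinite $S\subseteq\omega$ with $\{f(x_{n})\colon n\in S\}\cap\{g(x_{n})\colon n\in S\}=\emptyset$. I would do this by a greedy recursion: having chosen $n_{1}<\dots<n_{k}$, only finitely many $m$ satisfy $f(x_{m})\in\{g(x_{n_{1}}),\dots,g(x_{n_{k}})\}$ or $g(x_{m})\in\{f(x_{n_{1}}),\dots,f(x_{n_{k}})\}$ (finiteness of point-preimages), so an admissible $n_{k+1}>n_{k}$ exists, the only remaining potential collision $f(x_{n_{k+1}})=g(x_{n_{k+1}})$ being excluded by $x_{n_{k+1}}\in D$. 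Then $A=\{f(x_{n})\colon n\in S\}$ and $B=\{g(x_{n})\colon n\in S\}$ are disjoint closed discrete subsets of the normal space $Y$, so Urysohn's lemma yields $\varphi\in C(Y,[0,1])$ with $\varphi|A\equiv 0$ and $\varphi|B\equiv 1$. Choosing a free ultrafilter $\mathcal{U}$ on $\omega$ with $S\in\mathcal{U}$ and letting $\xi\in\beta X\setminus X$ be the limit of $(x_{n})$ along $\mathcal{U}$ (a point of $\operatorname{cl}_{\beta X}\{x_{n}\}\setminus\{x_{n}\}$), continuity of $\widetilde{f}$, $\widetilde{g}$, $\widetilde{\varphi}$ gives $\widetilde{\varphi}(\widetilde{f}(\xi))=0$ and $\widetilde{\varphi}(\widetilde{g}(\xi))=1$, so $\widetilde{f}(\xi)\neq\widetilde{g}(\xi)$, contradicting (ii). Hence $\operatorname{cl}_{X}D$ is compact and $f=g$ on its complement, which is (iii).

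I expect the main obstacle to be exactly this selection of $S$: the pointwise hypothesis $f(x_{n})\neq g(x_{n})$ does not separate the two image sequences $\{f(x_{n})\}$ and $\{g(x_{n})\}$, and it is only properness of $f$ and $g$ — making preimages of points finite — that makes the greedy construction succeed; once $S$ is in hand, Urysohn separation together with passage to an ultrafilter limit produces the required point of the corona. (Everything else — $\sigma$-compactness, $C^{\ast}$-embeddedness of closed discrete subsets of normal spaces, and the identification of the image of $\widetilde f\triangle\widetilde g$ — is routine.)
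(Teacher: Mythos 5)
Your argument is correct in substance and runs on the same engine as the paper's proof: produce a closed discrete sequence in $X$ escaping every compact set on which $f$ and $g$ disagree, use properness to greedily extract a subsequence whose two image sets are disjoint, and then use normality of $Y$ (your Urysohn function; the paper uses disjointness of closures in $\beta Y$) to force $\widetilde{f}$ and $\widetilde{g}$ to disagree at a point of $\beta X\setminus X$, contradicting (ii). The only organizational difference is that the paper argues locally at each corona point $x$ (shrinking neighborhoods $U_{n}$ with $\bigcap_{n}U_{n}\subset \beta X\setminus X$, a claim that $\widetilde f=\widetilde g$ on some $U_{i}$, and then a union-and-complement step to get the compact set $C$), whereas you argue globally with the difference set $D=\{x\in X\colon f(x)\neq g(x)\}$ and an exhaustion of $X$ by relatively compact open sets; this is a cosmetic repackaging, if anything slightly more direct, and your treatment of (i)$\Leftrightarrow$(ii) and (iii)$\Rightarrow$(ii) coincides with the paper's.

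One statement you make, and indeed single out as the crucial mechanism, is false and must be repaired: properness does \emph{not} make point-preimages finite, only compact (the projection $[0,1]\times[0,\infty)\to[0,\infty)$ is proper with nondegenerate fibers). What the greedy step actually requires is that only finitely many indices $m$ can have $f(x_{m})$ in a prescribed finite set $F$ (and symmetrically for $g$), and this follows from facts you already established: $f^{-1}(F)$ is compact by properness, the points $x_{m}$ are distinct, and $\{x_{m}\}$ meets every compact subset of $X$ in a finite set. With that substitution (which is exactly how the paper justifies its recursion) the extraction of $S$, the Urysohn separation of the two closed discrete image sets, and the evaluation at an ultrafilter limit in the corona all go through, and the proof is complete; also make explicit that the exhaustion $X=\bigcup_{n}U_{n}$ is chosen with each $\operatorname{cl}_{X}U_{n}$ compact, since that is what makes ``$\operatorname{cl}_{X}D$ noncompact'' yield points of $D$ outside every $\operatorname{cl}_{X}U_{n}$.
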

\begin{proof}
As mentioned above, \cite[Theorem 2.27]{roe} implies the equivalence (i) $\Longleftrightarrow$ (ii). Let us prove the implication (ii) $\Longrightarrow$ (iii). 
Take a point  $x \in \beta X \setminus X$. Since $X$ is Lindel\"{o}f (actually realcompactness of $X$ suffices here), we can find a sequence $\{ U_{n} \colon n \in \omega\}$ of open neighborhoods of $x$ in $\beta X$ with the following properties:
\begin{itemize}
\item[(1)]
$\operatorname{cl}_{\beta X}U_{n+1} \subset U_{n}$;
\item[(2)]
$\cap_{n\in \omega}U_{n} \subset \beta X \setminus X$.
\end{itemize}

We need the following

{\it Claim. There exists $i \in \omega$ such that $\widetilde{f}|_{U_{i}} = \widetilde{g}|_{U_{i}}$.}

To prove the claim, assume the contrary. By (ii), our assumption implies that $f|_{U_{i}\cap X} \neq g|_{U_{i}\cap X}$ for each $i \in \omega$. Assume that for each $k < n$ we have found $z_{k} \in U_{k}\cap X$ such that
\begin{itemize}
\item[(*)]
$$\{ f(z_{i})\colon i \leq k \} \cap \{ g(z_{i}) \colon i \leq k\} = \emptyset .$$
\end{itemize}
Next, let us construct a desired $z_{n}$. First, for each $i \geq n$, fix $a_{i} \in U_{i} \cap X$ such that
\begin{itemize}
\item[(a)]
$f(a_{i}) \neq g(a_{i})$.
\end{itemize}
Such $a_{i}$'s exist because we assumed that $f|_{U_{i}\cap X} \neq g|_{U_{i}\cap X}$ for each $i \in \omega$. Due to $(2)$ and $(3)$, no infinite subset of $\{ a_{i} \colon i \geq n\}$ is compact. Since $f$ is proper, there exits $n_{1}$ such that
\begin{itemize}
\item[(b)]
$f(a_{i}) \notin \{ g(z_{k}) \colon k \leq n \}$ for each $i > n_{1}$.
\end{itemize}
Similarly, there exists $n_{2}$ such that
\begin{itemize}
\item[(c)]
$g(a_{i}) \notin \{ f(z_{k}) \colon k \leq n \}$ for each $i > n_{2}$.
\end{itemize}
Pick any $i > \operatorname{max}\{ n_{1},n_{2}\}$ and let $z_{n} = a_{i}$. Since $i \geq n$, $z_{n} \in U_{n}$. By (a)--(c), the formula (*) holds for $k = n$. Our construction is complete.

Let $Z = \{ z_{n} \colon n \in \omega\}$. Clearly $Z$ is closed in $X$. Since $f$ and $g$ are closed maps, $f(Z)$ and $g(Z)$ are closed in $Y$. By (*), they are disjoint. Since $Y$ is normal, $\operatorname{cl}_{\beta Y}f(Z)$ and $\operatorname{cl}_{\beta Y}g(Z)$ are also disjoint. Therefore $\widetilde{f}(x) \neq \widetilde{g}(x)$ contradicting the hypotheseis of the lemma. The claim is proved.

By claim, for each $x \in \beta X \setminus X$, we can select an open neighborhood $U_{x}$ of $x$ in $\beta X$ such that $\widetilde{f}|_{U_{x}} = \widetilde{g}|_{U_{x}}$. Let $U = \cup_{x \in \beta X \setminus X}U_{x}$. Then $\widetilde{f}|_{U} = \widetilde{g}|_{U}$. The set $C = \beta X \setminus U$ is a compact subset of $X$ and $f|_{X\setminus C} = g|_{X\setminus C}$.

Implication (iii) $\Longrightarrow$ (ii) is trivial and valid for any spaces. Indeed, let $C$ be a compact subset of $X$ such that $f|(X\setminus C) = g|(X\setminus C)$. Consider a point $x \in \beta X \setminus X$. Since $C$ is closed in $\beta X$ and $x \notin C$, we can find an open neighborhood $U$ of $x$ in $\beta X$ such that $\operatorname{cl}_{\beta X}U \cap C = \emptyset$. The functions $\widetilde{f}|\operatorname{cl}_{\beta X}U$ and  $\widetilde{g}|\operatorname{cl}_{\beta X}U$ coincide on $U \cap X$. Since $U \cap X$ is dense in $\operatorname{cl}_{\beta X}U$, we conclude that $\widetilde{f}|\operatorname{cl}_{\beta X}U = \widetilde{g}|\operatorname{cl}_{\beta X}U$. Consequently, $\widetilde{f}(x) = \widetilde{g}(x)$.
\end{proof}


\section{Proper Absolute Extensors}\label{S:PAE}

We begin by a local version Definition  \ref{D:main}.

\begin{defin}\label{D:mainl}
A locally compact space $X$ is a proper absolute neighborhood extensor for a locally compact space $Y$ (notation: $X \in \operatorname{ANE}_{p}(Y)$) if any proper map $f \colon A \to X$, defined on a closed subset $A$ of $Y$, admits a proper extension $\bar{f} \colon G \to X$, where $G$ is a closed neighborhood of $A$ in $Y$.
\end{defin}

Below let $\mathcal{LCL}$ denote the class of locally compact and Lindel\"{o}f spaces.

\begin{defin}\label{D:m}
A space $X \in \mathcal{LCL}$ is a proper absolute (neighborhood) extensor (notation: $X \in \operatorname{A(N)E}_{p}$) if $X \in \operatorname{A(N)E}_{p}(Y)$ for any $Y \in \mathcal{LCL}$.
\end{defin} 

\begin{pro}\label{P:retract}
Every proper absolute (neighborhood) extensor is an absolute (neighborhood) extensor.
\end{pro}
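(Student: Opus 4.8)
The plan is to show that if $X$ is a proper absolute (neighborhood) extensor and $f\colon A\to X$ is an \emph{arbitrary} (not necessarily proper) map defined on a closed subset $A$ of some $Y\in\mathcal{LCL}$, then $f$ extends (over a closed neighborhood of $A$, in the $\operatorname{ANE}$ case) to $Y$. The natural device is to enlarge the target so that \emph{every} map into the enlarged target becomes proper, apply the proper extension hypothesis there, and then retract back onto $X$. Concretely, I would pass to $X\times[0,1)$. The key observations are: (1) the projection $p\colon X\times[0,1)\to X$ admits a section $s\colon X\to X\times[0,1)$, $s(x)=(x,0)$, and $p\circ s=\operatorname{id}_X$, so $X$ is a retract of $X\times[0,1)$ via a \emph{proper} retraction $p$ onto a closed subset; (2) for any map $h\colon B\to X$ from a space $B\in\mathcal{LCL}$, the map $B\to X\times[0,1)$ obtained by combining $h$ with a proper map $B\to[0,1)$ is proper (a proper map $B\to[0,1)$ exists because $B$ is locally compact and Lindel\"of, hence $\sigma$-compact, so it carries a proper real-valued function with nonnegative values, which can be rescaled into $[0,1)$).

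The main steps, in order, are as follows. First, given $f\colon A\to X$ on a closed subset $A$ of $Y\in\mathcal{LCL}$, note $A\in\mathcal{LCL}$ and fix a proper map $\varphi\colon A\to[0,1)$. Form $\widehat f=f\triangle\varphi\colon A\to X\times[0,1)$; this is proper since $\varphi$ is. Second, check that $X\times[0,1)\in\operatorname{A(N)E}_p$: this is a short verification, but it is exactly the kind of "product with $[0,1)$ is a proper absolute extensor'' phenomenon the introduction advertises (cf. the remarks before Proposition~\ref{P:1} and Lemma~\ref{L:manifold}); alternatively, and more cheaply, I would avoid asserting this and instead argue directly — see the next step. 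Third (the direct route), apply the hypothesis that $X$ itself is a proper A(N)E: but $\widehat f$ has target $X\times[0,1)$, not $X$, so this does not apply verbatim. To fix this, compose: since $X$ is a proper A(N)E and $X\times[0,1)\in\mathcal{LCL}$, I instead use that $X$ is a proper ANE of $X\times[0,1)$ — the identity-like map forcing a proper retraction — together with properness of the projection. Cleanly: let $r=p\colon X\times[0,1)\to X$ be the (proper) projection; it restricts to $\operatorname{id}_X$ on $s(X)$. Since $X$ is a proper A(N)E, the proper map $\operatorname{id}_X\colon X\to X$, viewed as defined on the closed subset $s(X)\cong X$ of $X\times[0,1)$, extends to a proper map $\rho\colon X\times[0,1)\to X$ (over a closed neighborhood of $s(X)$ in the ANE case). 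Now extend $\widehat f\colon A\to X\times[0,1)$: here I \emph{do} need $X\times[0,1)$ to be a proper A(N)E. So the honest structure is: (a) prove $X\times[0,1)\in\operatorname{A(N)E}_p$ from $X\in\operatorname{A(N)E}_p$ — using that a proper extension into $X$ of the first coordinate, together with a proper extension into $[0,1)$ of $\varphi$ over $Y$ (which exists since $[0,1)\in\operatorname{A(N)E}_p$, a one-dimensional check), gives a proper extension into the product; (b) apply this to get a proper $\bar g\colon Y\to X\times[0,1)$ (resp. over a closed neighborhood $G$ of $A$) with $\bar g|A=\widehat f$; (c) set $\bar f=p\circ\bar g$; then $\bar f|A=p\circ\widehat f=f$, and $\bar f\colon Y\to X$ (resp. $G\to X$) is the desired extension — no properness of $\bar f$ is claimed or needed, which is the whole point.

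Finally I would record the $\operatorname{AE}$ versus $\operatorname{ANE}$ bookkeeping: in the $\operatorname{ANE}$ case every extension above is only over a closed neighborhood, and these neighborhoods can be intersected down to a single closed neighborhood $G$ of $A$ in $Y$ on which $\bar f=p\circ\bar g$ is defined, giving $X\in\operatorname{ANE}$; in the $\operatorname{AE}$ case everything is global and one gets $X\in\operatorname{AE}$.

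\emph{Main obstacle.} The crux is step (a): verifying that $X\times[0,1)$ inherits the proper-extensor property from $X$. The subtlety is that a proper map $A\to X\times[0,1)$ need not split as a product of a proper map to $X$ and a proper map to $[0,1)$ — only the composite with the projections is controlled, and the first-coordinate map $A\to X$ need not be proper. The fix is to extend the \emph{second} coordinate properly first (using $[0,1)\in\operatorname{A(N)E}_p$, i.e. a proper extension of a real-valued proper function, which on a $\sigma$-compact space is elementary), thereby pinning down a proper function $\psi\colon Y\to[0,1)$ extending the second coordinate; then $\psi^{-1}([0,t])$ exhausts $Y$ by compacta, and on each such compactum the first coordinate map need only be \emph{continuously} extended — which is where one invokes that $X\in\operatorname{AE}_p$ applied to closed $C^{\ast}$-embedded pieces, or more simply that a proper map with target $X\times[0,1)$ whose $[0,1)$-coordinate is already controlled has proper preimages of compacta determined by $\psi$, so properness of the whole extension reduces to properness of $\psi$. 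Assembling this carefully — matching the extension of the first coordinate to be proper \emph{relative to} $\psi$ — is the only place requiring genuine care; everything else is formal retraction bookkeeping.
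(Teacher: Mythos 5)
Your overall strategy --- cross the target with $[0,1)$ to force properness, extend there, and project back --- is a genuinely different route from the paper's, but it has a real gap exactly at the point you flag as the crux, and your proposed fix does not close it. The whole argument funnels through the claim that $X\times[0,1)$ inherits the proper A(N)E property from $X$ (your step (a)), and your sketch for that claim begins by ``extending the second coordinate properly.'' This is not available: as you yourself observe, the $[0,1)$-coordinate of a proper map into a product need not be proper, and a non-proper map admits \emph{no} proper extension at all (the restriction of a proper map to a closed subset is proper). In the one situation where the second coordinate \emph{is} proper --- your $\widehat f=f\triangle\varphi$ --- making the extension $(H_1,H_2)$ proper is automatic once $H_2$ is proper, so the remaining task is to extend the first coordinate $H_1|_A=f$ \emph{continuously} into $X$; but $f$ is the original arbitrary (non-proper) map, so this is precisely the statement being proved. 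The argument is circular. Your fallback (``on each compactum $\psi^{-1}([0,t])$ the first coordinate need only be continuously extended, invoking $X\in\operatorname{AE}_p$ on $C^{*}$-embedded pieces'') has the same problem in disguise: extending continuously over each compact slab and gluing the slabs coherently is again the unproved continuous extension property of $X$, and the hypothesis $X\in\operatorname{AE}_p$ only ever hands you extensions of maps that are \emph{already proper}.

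The way to break the circularity is to apply the proper extension hypothesis to a map that is genuinely proper into $X$, and the canonical such map is $\operatorname{id}_X$. That is what the paper does: embed $X$ as a closed subset of $[0,1)\times I^{\tau}$ (diagonal of a proper map onto a second countable space with an embedding into $I^{\tau}$), use $X\in\operatorname{A(N)E}_p$ to extend $\operatorname{id}_X$ to a proper retraction of ($\!$a closed neighborhood in$\!$) $[0,1)\times I^{\tau}$ onto $X$, and conclude because a retract of a neighborhood in an absolute extensor is an A(N)E --- the properness of the retraction is then simply discarded. Note that your machinery is not wasted: your observation that every map from a closed subset of a $\sigma$-compact space can be made proper by pairing with a proper function $\varphi\colon A\to[0,1)$ is exactly what shows that the \emph{hypothesis} $X\in\operatorname{AE}_p$ is nonvacuous on non-proper data; but to get the \emph{conclusion} you must route the extension through a fixed ambient absolute extensor via a retraction, not through $X\times[0,1)$.
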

\begin{proof}
Let $X$ be a proper absolute neighborhood extensor. Since $X$ is locally compact and Lindel\"{o}f, there exists a proper map $p \colon X \to Y$, where $Y$ is a locally compact space with countable base. We may assume that $Y$ is a closed subspace of $[0,1)\times I^{\omega}$. Let also $i \colon X \to I^{\tau}$ denote an embedding of $X$ into the Tychonov cube $I^{\tau}$, where $\tau = w(X) \geq \omega$. Then the diagonal product $q = p \triangle i \colon X \to [0,1)\times I^{\omega}\times I^{\tau} \approx [0,1)\times I^{\tau}$ is an embedding with $q(X)$ closed in $[0,1) \times I^{\tau}$. We will identify $X$ with $q(X) \subset [0,1) \times I^{\tau}$. Since $X$ is a proper absolute neighborhood extensor, there exist a functionally open neighborhood $G$ of $X$ in $[0,1) \times I^{\tau}$ and a proper retraction $r \colon \operatorname{cl}_{[0,1)\times I^{\tau}}G \to X$. Since, by \cite[Proposition 6.1.4, Lemma 7.1.3]{chibook}, $G$ is an absolute neighborhood extensor, it follows that $X$ too is an absolute neighborhood extensor. 
\end{proof}

As noted in the Introduction, $R^{n}_{+}$ is a proper absolute extensor. The next statement makes this observation formal.

\begin{lem}\label{L:manifold}
Let $M$ be a compact metrizable $\operatorname{A(N)E}$-space and $N$ be a $Z$-set in $M$. If $N$ is also an $\operatorname{A(N)E}$-compactum, then $M\setminus N \in \operatorname{A(N)E}_{p}$.
\end{lem}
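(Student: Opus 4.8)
The plan is to reduce the proper extension problem over an arbitrary $Y\in\mathcal{LCL}$ to a genuine (non-proper) extension problem over the Stone-\v{C}ech corona $\beta Y\setminus Y$, where the hypotheses on $M$ and $N$ do the work. So let $f\colon A\to M\setminus N$ be a proper map defined on a closed subset $A$ of some $Y\in\mathcal{LCL}$; in the $\operatorname{ANE}_p$ case $A$ is merely closed, in the $\operatorname{AE}_p$ case $A$ is closed and we want an extension over all of $Y$. First I would pass to $\operatorname{cl}_{\beta Y}A=\beta A$ (using that $A$ is $C^{\ast}$-embedded when $Y$ is Lindel\"of, or restricting to a suitable closed neighborhood in the $\operatorname{ANE}$ case), and extend $f$ to a map $\bar f\colon \operatorname{cl}_{\beta Y}A\to M$ using that $M$ is a compact $\operatorname{A(N)E}$-space. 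Properness of $f$ forces $\bar f(\operatorname{cl}_{\beta Y}A\setminus A)\subset N$, exactly as in the proof of Theorem \ref{T:dim}.

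The second step is to extend the corona part. Since $N$ is a compact $\operatorname{A(N)E}$-space, the map $\bar f|(\operatorname{cl}_{\beta Y}A\setminus A)\colon \operatorname{cl}_{\beta Y}A\setminus A\to N$ extends over $\beta Y\setminus Y$ (in the $\operatorname{ANE}$ case, first over a closed neighborhood in the corona, then one shrinks the neighborhood $G$ of $A$ in $Y$ correspondingly). Call the extension $h\colon \beta Y\setminus Y\to N$. Because $A\cup(\beta Y\setminus Y)$ is closed in $\beta Y$ and $M$ is an $\operatorname{A(N)E}$, the combined map (agreeing with $f$ on $A$ and with $h$ on $\beta Y\setminus Y$) extends to a map $G\colon \beta Y\to M$ defined on all of $\beta Y$ (resp.\ on the closure of a neighborhood).

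The third and decisive step is to push $G$ off of $N$ away from $A\cup(\beta Y\setminus Y)$. Here is where the $Z$-set hypothesis enters: $N$ is a $Z$-set in the compact metrizable $\operatorname{AE}$-space $M$, and by Edwards' theorem $M\times I^{\omega}\approx I^{\omega}$ with $N\times I^{\omega}$ a $Z$-set in $I^{\omega}$, so the standard metrizable mapping replacement (the $\tau=\omega$ case underlying Proposition \ref{L:main}) applies: replacing $G$ by $G\times(\text{const})$ into $M\times I^{\omega}$ and using the Baire-category mapping replacement in $C(\beta Y, M\times I^{\omega})$, I obtain $F\colon \beta Y\to M$ with $F|(A\cup(\beta Y\setminus Y))=G|(A\cup(\beta Y\setminus Y))$ and $F(\beta Y\setminus(A\cup(\beta Y\setminus Y)))\subset M\setminus N$. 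In particular $F(Y\setminus A)\subset M\setminus N$, and since $F$ already sends $\beta Y\setminus Y$ into $N$, the restriction $\tilde f=F|Y$ maps into $M\setminus N$; properness of $\tilde f$ is automatic because $\tilde f^{-1}(C)$ for compact $C\subset M\setminus N$ is a closed subset of $Y$ disjoint from a neighborhood of $\beta Y\setminus Y$, hence compact. This $\tilde f$ is the required proper extension (in the $\operatorname{ANE}$ case, $\tilde f$ is defined on the closed neighborhood of $A$ produced in steps one and two).

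The main obstacle is the third step in the bare $\operatorname{AE}$-compactum case where $M$ itself need not be the Hilbert cube: one cannot directly invoke Chapman's complement theorem or a ready-made mapping replacement inside $M$. The fix, as indicated, is to stabilize by the Hilbert cube factor — $M\times I^{\omega}$ is $I^{\omega}$ by Edwards, $N\times I^{\omega}$ is a $Z$-set there, and pushing off a $Z$-set in the Hilbert cube is classical and yields a map with image in $(M\setminus N)\times I^{\omega}$; composing with the projection to $M$ gives the map into $M\setminus N$ without disturbing behavior on $A\cup(\beta Y\setminus Y)$. A minor secondary point is the bookkeeping in the $\operatorname{ANE}$ case — ensuring the neighborhood of $A$ in $Y$ over which we finally define $\tilde f$ is closed and that properness is preserved — but this is routine and parallels the handling of neighborhoods in Definition \ref{D:mainl} and the proof of Proposition \ref{P:retract}.
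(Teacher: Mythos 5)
Your first two steps coincide with the paper's: extend $f$ over $\operatorname{cl}_{\beta Y}A$, note that properness forces $\operatorname{cl}_{\beta Y}A\setminus A$ into $N$, extend that corona map over (a neighborhood in) $\beta Y\setminus Y$ using $N\in\operatorname{A(N)E}$, then glue with $f$ and extend over $\beta Y$ using $M\in\operatorname{A(N)E}$. The divergence, and the gap, is in your third step. You push the glued map off $N$ on $Y\setminus A$ by a Baire-category mapping replacement in $C(\beta Y, M\times I^{\omega})$. For that argument the set being pushed off the $Z$-set, here $\beta Y\setminus\bigl(A\cup(\beta Y\setminus Y)\bigr)=Y\setminus A$, must be exhausted by countably many compacta so that the good maps form a countable intersection of open dense sets; but $Y\setminus A$ need not be $\sigma$-compact for an arbitrary closed $A$ in a locally compact Lindel\"of space. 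Take $Y=I^{\omega_1}$ (compact, hence in $\mathcal{LCL}$) and $A$ a point which is not a $G_{\delta}$: then $Y\setminus A$ is pseudocompact and non-compact, hence not Lindel\"of, and the ``dense $G_{\delta}$'' step has no countable decomposition to work with. Equivalently, you need $A\cup(\beta Y\setminus Y)$ to be a zero set of $\beta Y$; this is exactly why the paper first reduces to the case where $A$ is functionally closed, a reduction your write-up omits and without which the step fails. A secondary inaccuracy: Edwards' theorem gives $M\times I^{\omega}\approx I^{\omega}$ only in the $\operatorname{AE}$ case; for an $\operatorname{ANE}$-compactum $M$ the product is merely a compact $Q$-manifold, so you would need the $Q$-manifold form of mapping replacement.

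The paper's third step avoids all of this. Since $N$ is a $Z$-set in the compact ANR $M$, there is an instantaneous push-off homotopy $H\colon M\times[0,1]\to M$ with $H(m,0)=m$ and $H(m,t)\in M\setminus N$ for $t>0$. Choosing a function $\alpha$ on the relevant closed piece of $\beta Y$ whose zero set is exactly the union of $A$ with the corona part (possible once $A$ is functionally closed, since the corona of a Lindel\"of locally compact space is functionally closed in $\beta Y$), the map $x\mapsto H(\widetilde h(x),\alpha(x))$ agrees with the old map on $A$ and on the corona and lands in $M\setminus N$ everywhere else, in one stroke --- no stabilization, no Baire category, no Chapman or Edwards. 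If you add the functionally-closed reduction your route can be repaired, but the homotopy trick is shorter and works directly inside $M$.
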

\begin{proof}
We only prove the parenthetical part since the absolute case is simpler. Let $f \colon A \to M\setminus N$ be a proper map, defined on a closed subset of a locally compact space $X$. Without loss of generality we may assume that $A$ is functionally closed in $X$. Consider the Stone-\v{C}ech extension $\widetilde{f} \colon \operatorname{cl}_{\beta X}A \to M$ of $f$. Since $f$ is proper, it follows that $\widetilde{f}(\operatorname{cl}_{\beta X}A \setminus A) \subset N$. Since $N$ is an $\operatorname{ANE}$-compactum, the map $\widetilde{f}|(\operatorname{cl}_{\beta X}A \setminus A) \colon \operatorname{cl}_{\beta X}A \setminus A \to N$ can be extended to a map $g \colon G \to N$, defined on an open neighborhood $G$ of $\operatorname{cl}_{\beta X}A \setminus A$ in $\beta X \setminus X$. Since $(\beta X \setminus X)\setminus G$ and $\operatorname{cl}_{\beta X}A$ are disjoint closed subsets of $\beta X$, we can find an open neighborhood $U$ of $\operatorname{cl}_{\beta X}A$ in $\beta X$ such that $\operatorname{cl}_{\beta X}U \cap (\beta X \setminus X) \subset G$. Next consider a map $h \colon (\operatorname{cl}_{\beta X}U \setminus X) \cup \operatorname{cl}_{\beta X}A \to M$, defined by letting

\[ 
h(x) = 
\begin{cases}
g(x), \text{if}\; x \in \operatorname{cl}_{\beta X}U \setminus X \\
f(x), \text{if}\; x \in A.\\
\end{cases}
\]

\noindent Note that $h$ is well defined since $\widetilde{f}$ and $g$ coincide on $\operatorname{cl}_{\beta X}A \setminus A$. Since $M$ is an $\operatorname{ANE}$-compactum, we can extend $h$ to a map $\widetilde{h} \colon \operatorname{cl}_{\beta X}V \to M$, where $V$ is an open subset of $\beta X$ such that $(\operatorname{cl}_{\beta X}U \setminus X) \cup \operatorname{cl}_{\beta X}A \subset V$ and $\operatorname{cl}_{\beta X}V \subset U$. Next choose a function $\alpha \colon \operatorname{cl}_{\beta X}V \to [0,1]$ such that $\alpha^{-1}(0) = (\operatorname{cl}_{\beta X}V \setminus V) \cup A$. This is possible since the Stone-\v{C}ech corona $\beta X \setminus X$ (and consequently $\operatorname{cl}_{\beta X}V \setminus \operatorname{cl}_{X}V$) is functionally closed in $\beta X$ (respectively, in $\operatorname{cl}_{\beta X}V$). Also, since by our assumption, $N$ is a $Z$-set in $M$, there is a homotopy $H \colon M \times [0,1] \to M$ such that $H(m,0) = m$, for any $m \in M$ and $H(m, t) \in M\setminus N$ for any $(m,t) \in M \times (0,1]$. Finally consider a map $f^{\prime} \colon \operatorname{cl}_{\beta X}V \to M$, defined as follows: $f^{\prime}(x) = H(\widetilde{h}(x),\alpha(x))$, $x \in \operatorname{cl}_{\beta X}V$. Note that $f^{\prime}(\operatorname{cl}_{X}V) \subset M\setminus N$ and $f^{\prime}(\operatorname{cl}_{\beta X}V \setminus \operatorname{cl}_{X}V) \subset N$. Consequently, $f^{\prime}| \operatorname{cl}_{X}V \colon  \operatorname{cl}_{X}V \to M\setminus N$ is proper. It only remains to observe that, by construction, $f^{\prime}|A = f$.
\end{proof}

\begin{lem}\label{L:compactification}
Let $X$ be a $\operatorname{AE}_{p}$-space with countable base. Then its one-point compactification $\alpha X = X \cup \{ \infty\}$ is an $\operatorname{AE}$-compactum and the point $\{\infty\}$ is a $Z$-set in $\alpha X$.
\end{lem}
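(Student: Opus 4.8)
The plan is to verify both assertions directly, using the characterization of proper absolute extensors and the basic behaviour of one-point compactifications. First I would show that $\{\infty\}$ is a $Z$-set in $\alpha X$. Since $X$ is locally compact with countable base, $\alpha X$ is a metrizable compactum and $X = \alpha X \setminus \{\infty\}$ is open and dense. To get the $Z$-set property it suffices to produce, for each $\varepsilon > 0$, a map $\alpha X \to \alpha X$ that is $\varepsilon$-close to the identity and misses $\{\infty\}$; equivalently, since $X$ is an $\operatorname{AE}_{p}$-space and hence (by Proposition \ref{P:retract}) an $\operatorname{AE}$-space, $X$ is in particular an $\operatorname{ANR}$, so it is enough to push $\alpha X$ off $\infty$ by a small homotopy. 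Concretely, fix a proper map $p \colon X \to [0,\infty)$ (which exists because $X$ is locally compact, Lindel\"of and non-compact — if $X$ were compact the statement is trivial). Then $p$ extends to $\widetilde p \colon \alpha X \to [0,\infty]$ with $\widetilde p(\infty) = \infty$, and for large $n$ the "truncation at level $n$'' retraction of $[0,\infty]$ onto $[0,n]$, pulled back through $\widetilde p$ and composed with a suitable near-identity map of $\alpha X$, yields maps into $X$ converging to the identity; this exhibits $\{\infty\}$ as a $Z$-set. (Alternatively one invokes that a point in a metrizable compactum with locally contractible complement of the same dimension is automatically a $Z$-set, but the explicit argument above is cleaner.)

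Next I would prove that $\alpha X$ is an $\operatorname{AE}$-compactum. Let $A$ be a closed subset of a metrizable compactum $Y$ and $f \colon A \to \alpha X$ a map; we must extend $f$ over $Y$. Set $A_{0} = f^{-1}(\infty)$, a closed subset of $A$. Since $X$ is an $\operatorname{ANR}$ (being an $\operatorname{AE}$-space by Proposition \ref{P:retract}), the restriction $f|(A \setminus A_{0}) \colon A \setminus A_{0} \to X$ extends to a map $g \colon W \to X$ defined on an open neighborhood $W$ of $A \setminus A_{0}$ in $Y \setminus A_{0}$; shrinking $W$ we may assume $\operatorname{cl}_{Y}W \cap A_{0} = \emptyset$ is false in general, so instead I choose $W$ so that $\operatorname{cl}_{Y}W \setminus W \subset A_{0} \cup (Y \setminus A)$ in a controlled way. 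The point is that $g$ restricted to $\operatorname{cl}_{Y}W \setminus W$ (the part lying over the "frontier'') together with the requirement that things limit to $\infty$ near $A_{0}$ gives a \emph{proper} map $g' \colon W' \to X$ on a suitable closed-in-$(Y\setminus A_{0})$ neighborhood $W'$ of $A \setminus A_{0}$, after composing with a function that sends the topological frontier towards $\infty$ — here is where properness enters. Now $W'$ is a closed $C^{\ast}$-embedded subset of the locally compact Lindel\"of space $Y \setminus A_{0}$, so the $\operatorname{AE}_{p}$-property of $X$ produces a proper extension $\bar g \colon Y \setminus A_{0} \to X$. Finally define $\bar f \colon Y \to \alpha X$ by $\bar f|(Y \setminus A_{0}) = \bar g$ and $\bar f|A_{0} = \infty$; continuity at points of $A_{0}$ follows from properness of $\bar g$ (preimages of compacta in $X$ are compact, hence stay away from $A_{0}$), and $\bar f|A = f$ by construction. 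This gives $\alpha X \in \operatorname{AE}$.

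The main obstacle I expect is the bookkeeping in the second part: arranging the neighborhood $W'$ and the "send-the-frontier-to-infinity'' adjustment so that (i) the resulting map $g'$ on $W'$ is genuinely proper as a map into $X$, (ii) $W'$ is closed and $C^{\ast}$-embedded in $Y \setminus A_{0}$ so that Definition \ref{D:main} applies, and (iii) the pieces glue to a map agreeing with $f$ on all of $A$ (not just on $A \setminus A_{0}$ and $A_{0}$ separately). This is exactly the kind of manipulation carried out in the proof of Lemma \ref{L:manifold}, using a Urysohn-type function vanishing precisely on the frontier together with the identity homotopy off the $Z$-set $\{\infty\}$ in $\alpha X$ — except that here we are \emph{building} the compactification rather than assuming one, so the homotopy $H$ of Lemma \ref{L:manifold} is replaced by an explicit "radial'' push in $\alpha X$ coming from the proper function $p$. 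Everything else — metrizability of $\alpha X$, the $\operatorname{ANR}$ property of $X$, extension of maps into $\operatorname{ANR}$s over neighborhoods — is standard and can be cited from \cite{chibook}.
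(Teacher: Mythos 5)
There is a genuine gap, and it sits in your first part. A proper function $p \colon X \to [0,\infty)$ gives you no way to deform $\alpha X$ into $X$: the truncation $[0,\infty] \to [0,n]$ lives in the target of $\widetilde p$, and there is no mechanism for ``pulling it back'' to a self-map of $\alpha X$ unless you already have a retraction of $\alpha X$ (or of a neighborhood of $\infty$) onto $\widetilde p^{-1}([0,n])$ that is close to the identity --- which is essentially the $Z$-set property you are trying to prove. Knowing that $X$ is an ANR does not help, since the maps you need must be defined at $\infty \notin X$. (Your parenthetical alternative is also not a valid general principle.) The same unproved ``radial push toward/away from $\infty$'' is what your second part leans on when you adjust the frontier, so the gap propagates. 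The missing idea, which is the paper's entire proof, is to embed $\alpha X$ into $I^{\omega}$ and apply the $\operatorname{AE}_{p}$ hypothesis to $\operatorname{id}_{X}$ on the closed subset $X \subset I^{\omega}\setminus\{\infty\}$: this produces a \emph{proper retraction} $r \colon I^{\omega}\setminus\{\infty\} \to X$, whose extension $\widetilde r \colon I^{\omega} \to \alpha X$ is a retraction with $\widetilde r^{-1}(\infty) = \{\infty\}$. That single object finishes everything: $\alpha X$ is a retract of $I^{\omega}$, hence an $\operatorname{AE}$, and composing $\widetilde r$ with near-identity self-maps of $I^{\omega}$ missing the $Z$-set $\{\infty\} \subset I^{\omega}$ gives near-identity self-maps of $\alpha X$ missing $\infty$.

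Separately, your second part is over-engineered even granting the push: with $A_{0} = f^{-1}(\infty)$, the restriction $f|(A\setminus A_{0}) \colon A\setminus A_{0} \to X$ is \emph{already} proper (for compact $K \subset X$, $f^{-1}(K)$ is a compact subset of $A$ disjoint from $A_{0}$), and $A\setminus A_{0}$ is closed, hence $C^{\ast}$-embedded, in the locally compact Lindel\"of space $Y\setminus A_{0}$. So $\operatorname{AE}_{p}$ applies directly to give a proper $\bar g \colon Y\setminus A_{0} \to X$; setting $\bar f = \bar g$ off $A_{0}$ and $\bar f \equiv \infty$ on $A_{0}$ is continuous by properness and extends $f$. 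No neighborhood $W'$, no frontier adjustment, and no homotopy off $\infty$ are needed for this half. If you take that shortcut for the extension property and the retraction trick for the $Z$-set property, the lemma closes.
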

\begin{proof}
Embed $\alpha X$ into the Hilbert cube $I^{\omega}$. Since $X$ is a proper absolute neighborhood extensor, there exists a proper retraction $r \colon I^{\omega} \setminus \{\infty\} \to X$. 
Properness of $r$ guarantees that $r$ has an extension $\widetilde{r} \colon I^{\omega} \to \alpha X$ such that $\widetilde{r}|(I^{\omega}\setminus\{\infty\}) = r$ and $\widetilde{r}(\{\infty\}) = \{\infty\}$. Since $\widetilde{r}$ is also a retraction, it follows that $\alpha X$ is an absolute extensor. Since $\widetilde{r}^{-1}(\{\infty\}) = \{\infty\}$ and $\{\infty\}$ is a $Z$-set in $I^{\omega}$, we conclude that $\{\infty\}$ is a $Z$-set in $\alpha X$ as well.
\end{proof}

\begin{cor}\label{C:iff}
Let $X$ be a locally compact space with countable base. Then the following conditions are equivalent:
\begin{itemize}
\item[(i)]
$X$ is a proper absolute extensor;
\item[(ii)]
$X$ is a proper retract of $[0,1)\times I^{\omega}$;
\item[(iii)]
The one-point compactification $\alpha X = X \cup \{\infty\}$ of $X$ is an absolute extensor in which the point $\{\infty\}$ is a $Z$-set.
\item[(iv)]
There exists a metrizable compactrification $\widetilde{X}$ of $X$ such that $\widetilde{X}$ and $\widetilde{X}\setminus X$ are absolute extensors and the corona $\widetilde{X}\setminus X$ is a $Z$-set in $\widetilde{X}$.
\end{itemize}
\end{cor}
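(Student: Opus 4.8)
The plan is to establish the cycle (i) $\Rightarrow$ (iii) $\Rightarrow$ (iv) $\Rightarrow$ (i) together with the separate equivalence (i) $\Leftrightarrow$ (ii); essentially all of the analytic content is already contained in Lemmas \ref{L:manifold} and \ref{L:compactification}, so the argument is largely organizational. I would first note that (i) $\Rightarrow$ (iii) is precisely Lemma \ref{L:compactification}. For (iii) $\Rightarrow$ (iv) I would take $\widetilde{X} = \alpha X = X \cup \{\infty\}$: the corona $\widetilde{X}\setminus X = \{\infty\}$ is a singleton, hence trivially an absolute extensor, it is a $Z$-set in $\widetilde{X}$ by (iii), and $\widetilde{X}$ itself is an absolute extensor by (iii). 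For (iv) $\Rightarrow$ (i) I would apply Lemma \ref{L:manifold} with $M = \widetilde{X}$ (a compact metrizable $\operatorname{AE}$-space) and $N = \widetilde{X}\setminus X$ (which is closed, hence a compactum, a $Z$-set in $M$ by hypothesis, and an $\operatorname{AE}$-compactum by hypothesis), obtaining $X = M \setminus N \in \operatorname{AE}_{p}$, which is exactly (i). This closes the cycle.

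To prove (i) $\Rightarrow$ (ii) I would argue as in the proof of Proposition \ref{P:retract} with $\tau = w(X) = \omega$: choose a proper map $p \colon X \to Y$ onto a locally compact space $Y$ with countable base, assume $Y$ is closed in $[0,1)\times I^{\omega}$, fix an embedding $i \colon X \hookrightarrow I^{\omega}$, and form $q = p \triangle i \colon X \to [0,1)\times I^{\omega}\times I^{\omega} \approx [0,1)\times I^{\omega}$. Properness of $p$ forces $q$ to be a closed embedding, so I would identify $X$ with the closed subspace $q(X)$ of $[0,1)\times I^{\omega} \in \mathcal{LCL}$; being closed in this (paracompact, hence normal) space, $q(X)$ is $C^{\ast}$-embedded. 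Then (i), applied to the proper map $\operatorname{id}_{X}$ regarded as a map on $q(X)$, yields a proper extension $r \colon [0,1)\times I^{\omega} \to X$, which is the desired proper retraction.

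For (ii) $\Rightarrow$ (i) it would suffice to check two things: (a) $[0,1)\times I^{\omega} \in \operatorname{AE}_{p}$, and (b) a proper retract of a space in $\operatorname{AE}_{p}$ again lies in $\operatorname{AE}_{p}$. For (a) I would observe that $[0,1)\times I^{\omega} = (I\times I^{\omega})\setminus(\{1\}\times I^{\omega}) \approx I^{\omega}\setminus(\{1\}\times I^{\omega})$, where $\{1\}\times I^{\omega}$ is a $Z$-set in $I\times I^{\omega}\approx I^{\omega}$ and is itself an $\operatorname{AE}$-compactum, so Lemma \ref{L:manifold} applies directly. For (b), given a proper retraction $\rho \colon P \to X$ with $P \in \operatorname{AE}_{p}$ and inclusion $j \colon X \hookrightarrow P$, and a proper map $f \colon A \to X$ on a closed (hence $C^{\ast}$-embedded, as the ambient space is normal) subset $A$ of some $Y \in \mathcal{LCL}$, the composite $j \circ f$ is proper, extends to a proper $\bar{g} \colon Y \to P$ since $P \in \operatorname{AE}_{p}$, and $\rho \circ \bar{g} \colon Y \to X$ is then a proper extension of $f$. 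Since $[0,1)\times I^{\omega} \in \mathcal{LCL}$ and $X \in \mathcal{LCL}$ (it has a countable base and is locally compact), (a) and (b) together give (i).

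I do not anticipate a serious obstacle; the only points requiring a little care are the verification that $q = p\triangle i$ has closed image (a standard consequence of properness of $p$, already used in the proof of Proposition \ref{P:retract}) and the two classical Hilbert-cube facts invoked in (a), namely $I\times I^{\omega}\approx I^{\omega}$ and that $\{1\}\times I^{\omega}$ sits in it as a $Z$-set. The real substance of the statement is carried by Lemmas \ref{L:manifold} and \ref{L:compactification}, which are established earlier.
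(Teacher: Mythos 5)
Your proof is correct and takes essentially the same approach as the paper: the same two lemmas (\ref{L:compactification} for producing the one-point compactification and \ref{L:manifold} for the implication (iv) $\Rightarrow$ (i)) carry all the weight, and your arrangement (i) $\Rightarrow$ (iii) $\Rightarrow$ (iv) $\Rightarrow$ (i) plus (i) $\Leftrightarrow$ (ii) is just a reordering of the paper's cycle (i) $\Rightarrow$ (ii) $\Rightarrow$ (iii) $\Rightarrow$ (iv) $\Rightarrow$ (i). You are merely more explicit than the paper in checking that $[0,1)\times I^{\omega}\in\operatorname{AE}_{p}$ and that proper retracts inherit membership in $\operatorname{AE}_{p}$, facts the paper asserts in passing.
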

\begin{proof}
(i) $\Longrightarrow$ (ii). Any locally compact space with countable base, in particular, $X$, admits a closed embedding into $[0,1) \times I^{\omega}$. By (i), the identity map $\operatorname{id}_{X}$ has a proper extension $r \colon [0,1)\times I^{\omega} \to X$, which obviously is a retraction. Implication (ii) $\Longrightarrow$ (iii) follows from Lemma \ref{L:compactification}, since $[0,1)\times I^{\omega}$ (and hence $X$ as its proper retract) is a proper absolute extensor. Implication (iii) $\Longrightarrow$ (iv) is trivial and implication (iv) $\Longrightarrow$ (i) follows from Lemma \ref{L:manifold}.
\end{proof}

\begin{pro}\label{P:1}
Let $X$ be a proper absolute extensor of countable weight. Then the following conditions are equivalent:
\begin{itemize}
\item[(i)]
$X$ satisfies $DD^{n}P$ for each $n$;
\item[(ii)]
$X$ is homeomorphic to $[0,1) \times I^{\omega}$.
\end{itemize}
\end{pro}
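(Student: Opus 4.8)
The plan is to reduce Proposition~\ref{P:1} to the known characterization of $[0,1)\times I^{\omega}$ (equivalently, of $I^{\omega}\setminus\{\text{pt}\}$) among locally compact separable metrizable spaces, via the bridge supplied by Corollary~\ref{C:iff}. The implication (ii)$\Longrightarrow$(i) is immediate: $[0,1)\times I^{\omega}$ is an open subset of the Hilbert cube $I\times I^{\omega}\approx I^{\omega}$, and disjoint-disc-type properties pass to open subsets of Hilbert cube manifolds, so $[0,1)\times I^{\omega}$ satisfies $DD^{n}P$ for every $n$. The substance is (i)$\Longrightarrow$(ii).

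For (i)$\Longrightarrow$(ii), first invoke Corollary~\ref{C:iff}: since $X$ is a proper absolute extensor of countable weight, its one-point compactification $\alpha X = X\cup\{\infty\}$ is an absolute extensor (in particular a compact $\operatorname{AR}$) in which $\{\infty\}$ is a $Z$-set. By Edwards' characterization of the Hilbert cube (\cite[Corollary 2.3.23]{chibook}, as already used in the proof of Corollary~\ref{C:first}), a compact metrizable $\operatorname{AR}$ satisfying $DD^{n}P$ for all $n$ is homeomorphic to $I^{\omega}$; so the next step is to verify that $\alpha X$ inherits the disjoint-disc properties from $X$. Here one uses that $X$ is dense and open in $\alpha X$, that $\{\infty\}$ is a $Z$-set, and hence that a general position argument for maps $D^{n}\times\{0,1\}\to \alpha X$ can first be pushed off $\{\infty\}$ into $X$ by the $Z$-set property and then separated inside $X$ using (i). With $\alpha X\approx I^{\omega}$ established, $\{\infty\}$ is a $Z$-set point of the Hilbert cube, so by $Z$-set unknotting (the $Z$-set Unknotting theorem, cf.\ \cite[Theorem 8.5.4]{chibook} in the case $\tau=\omega$) there is a homeomorphism of $\alpha X$ onto $I\times I^{\omega}$ carrying $\{\infty\}$ to a point of $\{1\}\times I^{\omega}$; removing it gives $X\approx (I\times I^{\omega})\setminus\{\text{pt}\}$. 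Finally, $(I\times I^{\omega})\setminus\{\text{pt}\}\approx [0,1)\times I^{\omega}$ because the complement of a $Z$-set point in $I^{\omega}$ is homeomorphic to $[0,1)\times I^{\omega}$ (again by Chapman's complement theorem, or directly: $[0,1)\times I^{\omega}$ is itself $I^{\omega}$ minus a $Z$-set point, and $Z$-set unknotting makes any two such complements homeomorphic).

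The main obstacle is the middle step: checking carefully that $DD^{n}P$ for $X$ implies $DD^{n}P$ for $\alpha X$. One must handle maps $f,g\colon D^{n}\to\alpha X$ whose images may meet $\{\infty\}$; the $Z$-set property of $\{\infty\}$ in $\alpha X$ lets one approximate $f$ and $g$ by maps into $X$, but to apply the hypothesis one also needs the approximations to remain close and to exploit that $X$ is open in $\alpha X$ so that smallness of the perturbation is measured in a metric on $\alpha X$ compatible with the topology of $X$. An alternative that sidesteps $DD^{n}P$ bookkeeping is to run the argument through Corollary~\ref{C:iff}(ii): $X$ is a proper retract of $[0,1)\times I^{\omega}$, hence $\alpha X$ is a retract of $I\times I^{\omega}\approx I^{\omega}$ (the retraction extending over $\infty$ by properness, as in Lemma~\ref{L:compactification}), and a retract of $I^{\omega}$ satisfying $DD^{n}P$ for all $n$ is the Hilbert cube by Toru\'nczyk's criterion — but one still owes the verification that the retract $\alpha X$ satisfies the disjoint-disc properties, which is precisely the same point. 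I would present the $Z$-set/general-position argument explicitly and then cite Edwards' theorem and $Z$-set unknotting to conclude.
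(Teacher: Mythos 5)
Your proposal is correct and follows essentially the same route as the paper: Corollary~\ref{C:iff}(iii) to get $\alpha X$ an absolute extensor with $\{\infty\}$ a $Z$-set, transfer of $DD^{n}P$ to $\alpha X$, Toru\'nczyk's characterization to identify $\alpha X$ with $I^{\omega}$, and Chapman's complement theorem (or $Z$-set unknotting) to conclude $X\approx I^{\omega}\setminus\{\infty\}\approx[0,1)\times I^{\omega}$. The only difference is that you explicitly flag and sketch the general-position argument for the step ``$\alpha X$ inherits $DD^{n}P$ from $X$,'' which the paper simply asserts.
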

\begin{proof}
(i) $\Longrightarrow$ (ii). By Corollary \ref{C:iff}(iii),  the one-point compactification $\alpha X = X \cup \{\infty\}$ of $X$ is an absolute extensor in which the point $\{\infty\}$ is a $Z$-set. Then, by (i), $\alpha X$ has the $DD^{n}P$ for each $n$ and by Torun\'{n}czyk's theorem \cite{torunQ}, $\alpha X \approx I^{\omega}$. Therefore $X \approx I^{\omega}\setminus \{\infty\}$. Finally note that by Chapman's Complement Theorem, $I^{\omega} \setminus \{\infty\} \approx [0,1)\times I^{\omega}$.

(ii) $\Longrightarrow$ (i). Trivial.
\end{proof}

\begin{cor}\label{C:product}
If $X$ is a proper absolute extensor of countable weight, then $X \times I^{\omega} \approx [0,1) \times I^{\omega}$.
\end{cor}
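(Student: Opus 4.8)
The plan is to obtain the statement by applying Proposition \ref{P:1} not to $X$ itself but to the stabilized space $X \times I^{\omega}$. Two things must be checked: first, that $X \times I^{\omega}$ is again a proper absolute extensor of countable weight, so that Proposition \ref{P:1} is applicable to it; second, that $X \times I^{\omega}$ automatically satisfies $DD^{n}P$ for every $n$. Granting both, Proposition \ref{P:1} yields $X \times I^{\omega} \approx [0,1) \times I^{\omega}$ at once.

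For the first point I would use the retract characterization of Corollary \ref{C:iff}. By that corollary ((i) $\Longrightarrow$ (ii)) there is a closed embedding $j \colon X \hookrightarrow [0,1) \times I^{\omega}$ together with a proper retraction $r \colon [0,1) \times I^{\omega} \to X$. Then $j \times \operatorname{id}_{I^{\omega}}$ is a closed embedding of $X \times I^{\omega}$ into $[0,1) \times I^{\omega} \times I^{\omega}$ with image $j(X) \times I^{\omega}$, and $r \times \operatorname{id}_{I^{\omega}}$ is a proper retraction onto that image (a product of proper maps between locally compact spaces is proper, since every compact subset of the target product is contained in a product of compacta, and the preimage of such a product is a product of compacta). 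Composing with a homeomorphism $[0,1) \times I^{\omega} \times I^{\omega} \approx [0,1) \times I^{\omega}$ exhibits $X \times I^{\omega}$ as a proper retract of $[0,1) \times I^{\omega}$; by Corollary \ref{C:iff} ((ii) $\Longrightarrow$ (i)) it is therefore a proper absolute extensor. Moreover $w(X \times I^{\omega}) = \max\{w(X),\omega\} = \omega$, and $X \times I^{\omega} \in \mathcal{LCL}$ as the product of a locally compact Lindel\"{o}f space with a metrizable compactum.

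For the second point one invokes the standard ``Hilbert cube factor'' phenomenon: any space of the form $Z \times I^{\omega}$ has $DD^{n}P$ for all $n$. Indeed, given maps $f = (f_{1},f_{2})$ and $g = (g_{1},g_{2})$ from $I^{n}$ into $X \times I^{\omega}$ and $\varepsilon > 0$, one uses the disjoint $n$-disks property of $I^{\omega}$ to perturb $f_{2},g_{2} \colon I^{n} \to I^{\omega}$ to maps $f_{2}',g_{2}'$ that are $\varepsilon$-close to $f_{2},g_{2}$ and have disjoint images; then $(f_{1},f_{2}')$ and $(g_{1},g_{2}')$ are $\varepsilon$-close to $f$ and $g$ and have disjoint images. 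Hence Proposition \ref{P:1} applies to $X \times I^{\omega}$ and delivers $X \times I^{\omega} \approx [0,1) \times I^{\omega}$.

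I do not anticipate a genuine obstacle: the entire substance has already been packaged into Proposition \ref{P:1} (and behind it Toru\'{n}czyk-type characterization of $I^{\omega}$ and Chapman's Complement Theorem), and the present statement is a routine stabilization. The only points deserving a sentence of care are the properness of the retraction $r \times \operatorname{id}_{I^{\omega}}$ and the (well known) fact that a Hilbert cube factor forces all the disjoint disks properties; neither is problematic.
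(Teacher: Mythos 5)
Your proposal is correct and follows the paper's own route exactly: the paper's one-line proof likewise observes that $X\times I^{\omega}$ is a proper absolute extensor satisfying $DD^{n}P$ for each $n$ and then applies Proposition \ref{P:1}. You have merely supplied the routine details (the proper-retract argument via Corollary \ref{C:iff} and the Hilbert-cube-factor argument for $DD^{n}P$) that the paper leaves implicit, and both are sound.
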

\begin{proof}
Note that $X\times I^{\omega}$ is a proper absolute extensor satisfying $DD^{n}P$ for each $n$ and apply Proposition \ref{P:1}
\end{proof}

\begin{thm}\label{T:1}
A proper absolute extensor of weight $\tau > \omega$ is homeomorphic to the product $[0,1)\times I^{\tau}$ if and only if it has the same pseudocharacter at each point.  
\end{thm}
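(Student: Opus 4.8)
The ``only if'' direction is a direct computation. The product $[0,1)\times I^{\tau}$ is locally compact, so at a point $(t,x)$ its pseudocharacter equals its character, namely $\max\bigl(\chi(t,[0,1)),\chi(x,I^{\tau})\bigr)=\max(\omega,\tau)=\tau$, since $\tau>\omega$; thus the pseudocharacter is the same ($=\tau$) at every point. Nothing further is needed here.

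For the converse, let $X$ be a proper absolute extensor with $w(X)=\tau>\omega$ and with $\psi(x,X)$ equal to one fixed cardinal $\kappa$ for all $x\in X$. The first step is to see that necessarily $\kappa=\tau$. As an absolute extensor (Proposition \ref{P:retract}), $X$ is a retract of $[0,1)\times I^{\tau}$ via some retraction $\rho$ (after a closed embedding $X\hookrightarrow[0,1)\times I^{\tau}$, which exists because $X$ is locally compact, Lindel\"of and of weight $\tau$). Hence $X=\bigcup_{n}K_{n}$ where $K_{n}=\rho\bigl([0,1-1/n]\times I^{\tau}\bigr)$, so each $K_{n}$ is a dyadic compactum (a continuous image of $I\times I^{\tau}$) and $w(X)=\sup_{n}w(K_{n})=\tau$. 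Since dyadic compacta satisfy $w=\sup_{y}\chi(y,\cdot)$, pick $y_{n}\in K_{n}$ with $\chi(y_{n},K_{n})=w(K_{n})$; then $\kappa=\psi(y_{n},X)=\chi(y_{n},X)\ge\chi(y_{n},K_{n})=w(K_{n})$, and letting $n\to\infty$ gives $\kappa\ge\tau$, while $\kappa\le w(X)=\tau$ always. Thus $\psi(x,X)=\tau$ at every point, and it remains to build a homeomorphism $X\cong[0,1)\times I^{\tau}$.

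The plan for this is to mimic the proof of Proposition \ref{P:1}, with \v{S}\v{c}epin's characterisation of the Tychonov cube (\cite[Theorem 8.1.6]{book}) playing the role of Toru\'{n}czyk's characterisation of $I^{\omega}$. By the $\tau>\omega$ analogue of Corollary \ref{C:iff}(ii), embed $X$ as a closed subset of $[0,1)\times I^{\tau}$ so that $\operatorname{id}_{X}$ extends to a proper retraction $r\colon[0,1)\times I^{\tau}\to X$, and view $[0,1)\times I^{\tau}$ as $I\times I^{\tau}\cong I^{\tau}$ minus the face $\{1\}\times I^{\tau}\cong I^{\tau}$. The core of the argument is then to use the spectral machinery of Section \ref{S:TC} (Corollaries \ref{C:spectralwcontrol} and \ref{C:spectralwcontrolh} together with the $\tau$-step inductive scheme of Proposition \ref{P:adjusted}), adjusting $r$ by finitely many coordinates at a time, to produce a compactification $\widetilde X$ of $X$ and a retraction $\widetilde r\colon I\times I^{\tau}\to\widetilde X$ with $\widetilde r|([0,1)\times I^{\tau})=r$ and such that $\widetilde r$ carries the face $\{1\}\times I^{\tau}$ \emph{homeomorphically} onto the corona $C:=\widetilde X\setminus X$. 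It is here that the hypothesis $\psi(x,X)\equiv\tau$ is spent: it forces the ``end'' of $X$ to be everywhere of pseudocharacter $\tau$, which is exactly what lets the construction unwind the end of $[0,1)\times I^{\tau}$ onto it without identifications (contrast the one-point compactification, whose corona is a single $G_{\delta}$ point and which therefore cannot be $I^{\tau}$). Granting this, $\widetilde X$ is a retract of $I\times I^{\tau}\cong I^{\tau}$, hence an $\operatorname{AE}$-compactum of weight $\le\tau$, and $C\cong I^{\tau}$; since $\psi(x,\widetilde X)=\psi(x,X)=\tau$ for $x\in X$ (as $X$ is open in $\widetilde X$) while $\psi(c,\widetilde X)\ge\psi(c,C)=\tau$ for $c\in C$, the compactum $\widetilde X$ has constant character $\tau$, so $\widetilde X\cong I^{\tau}$ by \v{S}\v{c}epin's theorem.

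Finally, $C=\widetilde X\setminus X=\bigcap_{n}(\widetilde X\setminus K_{n})$ is a $G_{\delta}$ copy of $I^{\tau}$ in $\widetilde X\cong I^{\tau}$ whose complement $X$ is a proper absolute neighbourhood extensor. A collaring argument — using that $X$ is a proper $\operatorname{ANE}$ near $C$ and that $\{1\}\times I^{\tau}$ is collared in $I\times I^{\tau}$ — shows that $C$ has a collar in $\widetilde X$; combined with the recognition of $\widetilde X$ minus an open collar of $C$ as again a copy of $I^{\tau}$ and with uniqueness of collars, this gives $(\widetilde X,C)\cong(I\times I^{\tau},\{1\}\times I^{\tau})$, whence $X=\widetilde X\setminus C\cong[0,1)\times I^{\tau}$ (which re-confirms $w(X)=\tau$). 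The main obstacle is the middle step — constructing $\widetilde X$ and $\widetilde r$ with $\widetilde r$ restricting to a homeomorphism on the face — where one must simultaneously keep $\widetilde r$ continuous over all of $I\times I^{\tau}$ (this forces the coordinatewise adjustments of Proposition \ref{P:adjusted} at each of the $\tau$ stages) and prevent any identification on the face (this is precisely where constant pseudocharacter is genuinely used); the collaring step is a secondary, more routine difficulty.
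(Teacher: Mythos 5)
Your ``only if'' direction and the preliminary observation that constant pseudocharacter must equal $\tau$ are fine, but the heart of your converse is not a proof: the step you yourself flag as ``the core of the argument'' --- producing a compactification $\widetilde X$ of $X$ and a retraction $\widetilde r\colon I\times I^{\tau}\to\widetilde X$ extending $r$ that carries the face $\{1\}\times I^{\tau}$ \emph{homeomorphically} onto the corona --- is only asserted (``granting this''), and nothing in the paper or in your sketch makes it plausible that a proper retraction can be modified, coordinatewise or otherwise, so that its extension is injective on the face; this is essentially where the whole content of the theorem sits, and it is exactly the point at which the constant-pseudocharacter hypothesis would have to be spent, yet no mechanism for spending it is given. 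The concluding step is also unsupported in the non-metrizable setting: your corona $C=\widetilde X\setminus X$ is functionally closed (a $G_{\delta}$) in $\widetilde X\cong I^{\tau}$, hence by Proposition \ref{P:zset2}(2) it is \emph{not} a $Z_{\tau}$-set, so $Z_{\tau}$-set unknotting is unavailable, and the ``uniqueness of collars'' and ``recognition of $\widetilde X$ minus an open collar as $I^{\tau}$'' statements you invoke to get $(\widetilde X,C)\cong(I\times I^{\tau},\{1\}\times I^{\tau})$ are nontrivial claims about copies of $I^{\tau}$ inside $I^{\tau}$ for which you give no argument. In effect, the existence of a compactification with corona $I^{\tau}$ and a trivial collar is very close to the conclusion $X\cong[0,1)\times I^{\tau}$ itself, so the argument is circular at the two places where it is incomplete.

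For comparison, the paper never compactifies $X$. It embeds $X$ as a closed subset of $[0,1)\times I^{A}$, takes a proper retraction $r\colon[0,1)\times I^{A}\to X$, and runs the spectral argument of \v{S}\v{c}epin's theorem (\cite[Theorem 7.2.8]{chibook}) directly on $X$, using the constant pseudocharacter to produce a continuous well-ordered spectrum of length $\tau$ whose consecutive projections are trivial $I^{\omega}$-bundles and whose first term $X_{0}$ is a countable-weight proper absolute extensor; this yields $X\cong X_{0}\times I^{\tau}$, and then Corollary \ref{C:product} ($X_{0}\times I^{\omega}\cong[0,1)\times I^{\omega}$) finishes. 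If you want to salvage your outline, you would need either to carry out the face-straightening construction in detail or, more realistically, to switch to this spectral factorization, which replaces both of your unproved steps.
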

\begin{proof}
Obviously pseudocharacter of each point of the product $[0,1) \times I^{\tau}$ equals to $\tau$. Let now $X$ be a proper absolute retract of weight $\tau$. As in the proof of Proposition \ref{P:retract} we may assume that $X$ is closed in $[0,1) \times I^{A}$, where $|A| = \tau$. Since $X$ is a proper absolute extensor, there exists a proper retraction $r \colon [0,1) \times I^{A} \to X$. Proceeding as in the proof of \cite[Theorem 7.2.8]{chibook}, we can construct a continuous well ordered inverse spectrum ${\mathcal S} = \{ X_{\alpha}, p_{\alpha}^{\alpha +1}, \tau\}$ of length $\tau$, satisfying the following conditions:
\begin{itemize}
\item[(i)]
$X = \lim{\mathcal S}$;
\item[(ii)]
all spaces $X_{\alpha}$ are locally compact and Lindel\"{o}f proper absolute extensors;
\item[(iii)]
all short projections $p_{\alpha}^{\alpha +1} \colon X_{\alpha +1} \to X_{\alpha}$ are trivial bundles with fiber $I^{\omega}$;
\item[(iv)]
the space $X_{0}$ is a locally compact space of countable weight.
\end{itemize}

Then the space $X$ is homeomorphic to the product $X_{0}\times I^{\tau}$. By (ii), (iv) and Corollary \ref{C:product}, $X_{0}\times I^{\omega} \approx [0,1)\times I^{\omega}$ and consequently, $X \approx [0,1) \times I^{\tau}$. 
\end{proof}



\end{document}